        \newtheorem{thm}{Theorem}[section]
        \newtheorem{pro}[thm]{Proposition}
        \newtheorem{lem}[thm]{Lemma}
        \newtheorem{cor}[thm]{Corollary}
        \def\commutatif{\ar@{}[rd]|{\circlearrowleft}}
        \newcommand{\eq}[1][r]
           {\ar@<-3pt>@{-}[#1]
            \ar@<-1pt>@{}[#1]|<{}="gauche"
            \ar@<+0pt>@{}[#1]|-{}="milieu"
            \ar@<+1pt>@{}[#1]|>{}="droite"
            \ar@/^2pt/@{-}"gauche";"milieu"
            \ar@/_2pt/@{-}"milieu";"droite"}
        \def\dar[#1]{\ar@<2pt>[#1]\ar@<-2pt>[#1]}
        \newcommand{\bigon}[4][r]{
            \ar@/^1pc/[#1]^{#2}_*=<0.3pt>{}="HAUT"
            \ar@/_1pc/[#1]_{#3}^*=<0.3pt>{}="BAS"
            \ar@{=>} "HAUT";"BAS" ^{#4}
          }
        \newcommand{\bigons}[6][r]{  
            \ar@/^2pc/[#1]^{#2}_*=<0.3pt>{}="HAUT"
            \ar@{}    [#1]     ^*=<0.3pt>{}="MILIEUHAUT"
                               _*=<0.3pt>{}="MILIEUBAS"
            \ar[#1]_(0.3){#3}                  
            \ar@/_2pc/[#1]_{#4}^*=<0.3pt>{}="BAS"
            \ar@{=>} "HAUT";"MILIEUHAUT" ^{#5}
            \ar@{=>} "MILIEUBAS";"BAS" ^{#6}
          }
        \theoremstyle{definition}
        \newtheorem{df}[thm]{Definition}
        \theoremstyle{remark}
        \newtheorem{ex}[thm]{Example}
        \newcommand{\inn}{\operatorname{Inn}}
        \newcommand{\Z}{\mathbb{Z}}
        \newcommand{\C}{\mathbb{C}}
        \newcommand\rTo{\longrightarrow}
        \newcommand\mto{\longmapsto}
        \newcommand\rRack{\triangleright}
        \newcommand{\one}{\mathbf{1}}
        \def\Aut{\operatorname{Aut}}
        \title{On the Representation Theory of Dihedral and Cyclic Quandles}
        \author{Mohamed Elhamdadi} 
        \address{Department of Mathematics, 
        University of South Florida, Tampa, FL 33620} 
        \email{emohamed@math.usf.edu} 
        \author{Prasad Senesi} 
        \address{Department of Mathematics, 
        The Catholic University of America,  Washington, DC 20064} 
        \email{senesi@cua.edu} 
        \author{Emanuele Zappala} 
        \address{Yale School of Medicine, 
        	Yale University, New Haven, CT 06511} 
        \email{emanuele.zappala@yale.edu}
\begin{document}

        \maketitle

        \begin{abstract}

        Quandle representations are homomorphisms from a quandle to the group of invertible matrices on some vector space taken with the conjugation operation. 
        We study certain families of quandle representations.
        More specifically, we introduce the notion of regular representation for quandles, investigating in detail the regular representations of dihedral quandles and \emph{completely classifying} them. Then, we study representations of cyclic quandles, giving some necessary conditions for irreducibility and providing a complete classification under some restrictions. Moreover, we provide various counterexamples to constructions that hold for group representations, and show to what extent such theory has the same properties of the representation theory of finite groups. In particular, we show that Maschke's theorem does not hold for quandle representations.  
        \end{abstract}
        
        \tableofcontents
        
        \section{Introduction}
        
        Quandles are algebraic objects whose defining axioms algebraically capture the Reidemeister moves from knot theory. Since they have been introduced indepentently by Joyce and Matveev in \cite{Joyce,Matveev}, they have found several applications in low-dimensional topology. The {\it fundamental quandle} of Joyce and Matveev, in fact, is a complete invariant of knots and links (up to mirror image and orientation reversal) and it is therefore a very powerful tool in distinguishing links. However, the fact that it completely classifies links is substantially indicative of the fact that it simply translates the classification complexity of knot theory into another equivalent complexity in algebraic terms. In fact, more practically, the fundamental quandle is derived from a presentation where generators and relations are obtained from a diagram of the knot following a procedure very similar to the Wirtinger presentation of the knot group. Comparing two presentations of two quandles is a significantly hard problem. Consequently, in order to apply these algebraic tools, several weaker invariants derived from the fundamental quandle have been introduced. For example, counting the number of homomorphisms from the fundamental quandle of a link to a fixed quandle $X$ gives a easily comparable quantity that is invariant under Reidemeister moves, which is commonly referred to as {\it quandle coloring invariant} \cite{EN}. The counting invariant is significantly weaker than the fundamental quandle, and it is not difficult to incur into non-isotopic links and given target quandles that produce identical coloring invariants. We have therefore come across an over-simplification of the original problem. In the spectrum of complexity spanning from the fundamental quandle to the quandle coloring invariant one finds several other invariants, such as the {\it cocycle invariant} of links and knotted surfaces in $4$-space introduced in \cite{CJKLS}. The cocycle invariant has in fact been adapted to several other object including, for example, the cocycle invariants of handlebodies \cite{IIJO}.
        
        The epitomizing example of quandle is a group $G$ with operation $*$ defined by conjugation, i.e. $x*y = yxy^{-1}$, where juxtaposition indicates multiplication in the group $G$. This fundamental example gives the concrete idea of using the group of invertible matrices acting over a fixed vector space with quandle operation given by conjugation to represent quandles. This provides a way of introducing a representation theory for quandles (see also \cite{EM}), where a representation of a quandle $X$ is given by a quandle homomorphism from $X$ into $M(n,n)$, where $n$ is the dimension of the representation. Of course, similarly to the notion of regular representation for a group, of the adjoint represenation of a Lie algebra, there is a natural representation that always arises when considering a quandle. This is what is hereby called the {\it regular representation} in analogy with the group theoretic case, and it simply leverages the fact that every element $x$ in a quandle $X$ acts by right multiplication on $X$. Therefore, defining a vector space $V = \mathbbm k\langle X \rangle$ as the span of the elements of $X$, we obtain for each $x$ a linear map $V\longrightarrow V$. This is seen to satisfy the defining condition of quandle representation. The space $\mathbbm k\langle X \rangle$ with addition and a natural multiplication 
        \[
        (\sum_{x \in X}a_x e_x)(\sum_{y \in X}b_ye_y):=\sum_{x,y \in X}a_xb_ye_{x*y},
        \]
        becomes a (non-associative) ring.  Thus in \cite{BPS} a theory of quandle rings was proposed in a parallel to the theory of group rings. In particular it was shown that quandle rings of dihedral quandles are not power-associative.  This was further generalized in \cite{EFT} to quandle rings of all non-trivial quandles. 
 Examples of non-isomorphic quandles with isomorphic quandle rings were given in \cite{EFT} and a decomposition of the quandle ring of dihedral quandles into simple right ideals was given, but the results were incomplete as they did not address the case of dihedral quandles of odd order.  The complete decomposition of  the regular quandle representation of a dihedral quandles was addressed independently in \cite{ESZ} and \cite{KS}.  Recall that The \emph{augmentation} surjective homomorphism $\epsilon: \mathbbm k\langle X \rangle \rightarrow k$ given by $\epsilon (\sum_{x \in X}a_x e_x):= \sum_{x \in X}a_x$ gives the decomposition into two non-trivial subrepresentations
        \[
        \mathbbm k\langle X \rangle = {\rm ker} (\epsilon) \oplus k,
        \]
        where ${\rm ker}(\epsilon) = \left\{ \sum_{x \in X} a_x e_x |\; \sum a_x = 0 \right\}$ is subrepresentation of \emph{codimension} one.

        This theory presents algebraic interest, in that it can be regarded as a theory of representations of generalized conjugation operations. In addition, it has a fundamental knot-theoretic objective, which is the main underlying driving force of the present article. Namely, this is the fact that the representation-theoretic properties of the fundamental quandle of a link do not depend on the diagram that one chooses to define the fundamental quandle. Therefore, one can distinguish links by distinguishing the properties of their fundamental quandles in terms of their quandle representations. Several invariants can be extrapolated from the representation category of the fundamental quandle of a link. We will not indulge on the topological applications of this theory in this article, and defer such work to a subsequent article, while we will focus on the algebraic properties of some classes of representations, and unveil some important examples that further motivate this study.
        
    This article is organized as follows:  Section~\ref{review} reviews the basics of racks and quandles including the definition of cyclic quandles used later in the article.  In Section~\ref{cyclic} we provide a presentation of the Alexander quandles of cyclic type in order to facilitate the representation theory explored later in this paper. Although it is not the focus of this work, our results in this section provide unexpected and previously unknown results: a presentation of an arbitrary cyclic quandle via generators and relations, and the complete classification of all cyclic quandles (Theorem \ref{cyclicclas}). Section~\ref{rep} gives the basic definitions of representation theory for quandles.  In Section~\ref{Rep Dihedral} we introduce the regular representation of dihedral quandles, and completely classify them by providing their explicit decomposition into irreducible subrepresentations (Theorem \ref{dihedral}). In Section~\ref{sec:irreps}, we study representations of cyclic quandles, examining in detail the case of dimension $2$ representations, and describe a class of representations for which a complete classification is given (Theorem \ref{thm:higher_dim_cyclic}). In Section~\ref{sec:non_decomposability} we provide various counterexamples regarding the decomposability of quandle representations, and show that Maschke's theorem does not hold for quandles. Some computational results are deferred to the Appendix.

        \section{Review of racks and quandles}\label{review}

        \begin{df}	
        \cites{EN, Joyce, Matveev} 
        A {\it rack} is a set $X$ provided with a binary operation 
        \[
        \begin{array}{lccc}
        \rRack: & X\times X & \rTo &X \\
         & (x,y) & \mto & x\rRack y
        \end{array}
        \] such that 
        \begin{itemize}
        \item[(i)] for all $x,y\in X$, there is a unique $z\in X$ such that $y=z\rRack x$;
        \item[(ii)]({\it right distributivity}) for all $x,y,z\in X$, we have $(x\rRack y)\rRack z=(x\rRack z)\rRack (y\rRack z)$.
        \end{itemize}
        \end{df}
        
        Observe that property (i) also reads that for any fixed element $x\in X$, the map $R_x:X\ni y\mto y\rRack x\in X$ is a 
        bijection. 
        Also, notice that the distributivity condition is equivalent to the relation $R_x(y\rRack z)=R_x(y)\rRack R_x(z)$ for all $y,z\in X$.

        Unless otherwise stated, we will always assume our racks (or quandles) to be finite racks (or quandles).

        \begin{df}
        A {\it quandle} is a rack such that 
         $x\rRack x=x,\forall x\in X$. 
        \end{df}
       	\begin{ex} 
       The set $\Z_n = \left\{ 1, 2, \ldots, n \right\}$, with quandle operation $x \rRack y = 2y - x$ (\text{mod } $n)$ is a quandle, called \emph{dihedral} quandle.
         \end{ex}
        	
        	\begin{ex}
        	Any group $G$ with the operation $x\rRack y=yxy^{-1}$ is a quandle, called conjugation quandle and denoted by Conj(G). 
        \end{ex}
        \begin{ex}
        	Any group $G$ with the operation $x\rRack y=yx^{-1}y$ is a quandle called the Core quandle of $G$ and denoted Core(G). 
        \end{ex}
        
        \begin{ex}
        	Let $G$ be a group and $f \in {\rm Aut}(G)$, then one can define a quandle structure on $G$ by $x\rRack y=f(xy^{-1}) y $.  It is called a {\it generalized Alexander quandle}.	If $G$ is abelian, the operation becomes $x\rRack y=f(x)+ (Id-f) (y) $, where $Id$ stands for the identity map.  This quandle is called an {\it Alexander quandle}.
        	
        	In particular,  let $q$ be a power of a prime $p$,  and let $\mathbb{F}_q$ be the field with $q$ elements.   For an element $\alpha \in \mathbb{F}_q$, we will denote by $(\mathbb{F}_q,  \alpha)$ the Alexander quandle with $x \rRack y = \alpha x + (1-\alpha) y$ for all $x, y \in \mathbb{F}_q$.  These quandles will be closely studied later in this paper.
        \end{ex}

        For $x \in X$, we denote by $R_x$ the quandle automorphism of $X$ given by 
        \[ R_x (y) = y \rRack x.\]
        As each $R_x$ permutes the elements of $X$, we can consider the subgroup $\left\langle R_x \right\rangle_{x \in X}$ in $S_X$ (the symmetric group on $X$) generated by the $R_x$, which we denote by $\inn(X)$ (the group of inner automorphisms of $X$).  
        
        
        
        \begin{df} \cite{KTW}
        A finite quandle $X$ of cardinality $n>2$ is said to be of cyclic type if for all $i \in X$, the right multiplication $R_i$ is a cycle of length $(n-1)$. 
        \end{df}

        The phrase `quandle of cyclic type' is prevalent in the literature.  In this manuscript, we also use the phrase `cyclic quandle' to mean the same thing.
        
        Here are few examples of \emph{cyclic} quandles.
        \begin{itemize}
        \item
        For $n=3$, the dihedral quandle $\mathbb{Z}_n$ is the only cyclic quandle.
        
        \item
        
        For $n=4$, the Alexander quandle $\mathbb{Z}_2[t]/(t^2+t+1) $ is the only cyclic quandle.  It is isomorphic to the quandle $X=\{1,2,3,4\}$ with $R_1=(234)$, $R_2=(143)$, $R_3=(124)$ and $R_4=(132)$.
        
        \item
        
        For $n=5$, there are exactly two cyclic quandles which are the Alexander quandles $\mathbb{Z}_5[t]/(t-3)$ and $\mathbb{Z}_5[t]/(t-2)$.
        \end{itemize}
        
    Quandles of cyclic type are (isomorphic to) certain Alexander quandles $(\mathbb{F}_q, \alpha)$.  This correspondence is given in \cite{Wada}:
    
    \begin{thm}[\cite{Wada}]
    A quandle $X$ is of cyclic type if and only if $X$ is isomorphic to an Alexander quandle $(\mathbb{F}_q,  \alpha)$, where $\alpha$ is a primitive element in $\mathbb{F}_q$. 
    \end{thm}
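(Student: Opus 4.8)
The backward implication is a direct computation, so the real content is the forward one; I sketch both, the plan for the forward direction being to recognise $X$ as an affine quandle over a finite field and then to extract primitivity of the multiplier from the cyclic-type condition. For the backward direction: if $X=(\mathbb{F}_q,\alpha)$ with $\alpha$ primitive and $q>2$, then $R_i(y)=\alpha y+(1-\alpha)i=\alpha(y-i)+i$, so $R_i$ fixes $i$ and, after conjugating by the translation $u\mapsto u-i$, is the map $u\mapsto\alpha u$ on $\mathbb{F}_q$; since $\langle\alpha\rangle=\mathbb{F}_q^{\times}$ the orbit of every nonzero $u$ is all of $\mathbb{F}_q^{\times}$, so this is a single $(q-1)$-cycle on $\mathbb{F}_q\setminus\{0\}$, and each $R_i$ is an $(|X|-1)$-cycle, i.e. $X$ is of cyclic type.

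For the forward direction, assume $X$ is of cyclic type with $|X|=n>2$. First I would establish that $\inn(X)$ is $2$-transitive: the identity $R_{y\rRack z}=R_zR_yR_z^{-1}$ yields $R_{R_a^{\,j}(b)}=R_a^{\,j}R_bR_a^{-j}$, so $\inn(X)=\langle R_a,R_b\rangle$ for any $a\neq b$; since $\langle R_a\rangle$ fixes $a$ and, being generated by an $(n-1)$-cycle, acts transitively on $X\setminus\{a\}$, the group $\inn(X)$ is transitive with point stabilizer transitive on the complement, hence $2$-transitive and $X$ is connected. Note also that $a\rRack y\neq a$ for $y\neq a$, so $R_y\notin\inn(X)_a$ unless $y=a$.

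The decisive step is to upgrade this to \emph{sharp} $2$-transitivity, i.e. to prove $\inn(X)_a=\langle R_a\rangle$, hence $|\inn(X)|=n(n-1)$. This is where distributivity must be used in an essential way: by itself the hypothesis only tells us that $\inn(X)_a$ acts faithfully on the $n-1$ points of $X\setminus\{a\}$ and contains the regular cyclic group generated by $R_a$, which does not force equality, so one exploits that $R_a$ (of order $n-1$) normalises the displacement group $\mathrm{Dis}(X)=\langle R_xR_y^{-1}\rangle$ and cyclically permutes its generators $R_xR_a^{-1}$; a nontrivial stabilizer $\mathrm{Dis}(X)_a$ would then have all its orbits on $X\setminus\{a\}$ of a common size $t>1$ dividing $n-1$, forming a block system for the $(n-1)$-cycle $R_a$, and the quandle relations are used to contradict this. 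Granting sharp $2$-transitivity, a finite sharply $2$-transitive group is, by Zassenhaus, $\mathrm{AGL}(1,K)=(K,+)\rtimes(K^{\times},\cdot)$ for a near-field $K$ with $|K|=n$; under the identification $\mathrm{Dis}(X)$ becomes $(K,+)$, and since $gR_xg^{-1}=R_{g(x)}$ for any quandle automorphism $g$ all the $R_x$ are conjugate, so the operation takes the affine form $x\rRack y=\alpha\cdot(x-y)+y$ for one fixed $\alpha\in K^{\times}$, with $R_0$ corresponding to multiplication by $\alpha$.

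To finish: $R_0$ has order $n-1=|K^{\times}|$ in $K^{\times}$, so $K^{\times}=\langle\alpha\rangle$ is cyclic, and by the classification of finite near-fields a finite near-field with cyclic (indeed abelian) multiplicative group is a field, so $K=\mathbb{F}_q$ with $q=n$; the cyclic-type hypothesis then says that multiplication by $\alpha$ is a single $(q-1)$-cycle on $\mathbb{F}_q^{\times}$, i.e. $\langle\alpha\rangle=\mathbb{F}_q^{\times}$, which is precisely primitivity of $\alpha$, so $X\cong(\mathbb{F}_q,\alpha)$ with $\alpha$ primitive. I expect the sharp-$2$-transitivity step to be the genuine obstacle. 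An alternative route avoiding near-field theory is to prove directly that $X$ is medial, equivalently that $\mathrm{Dis}(X)$ is abelian, invoke the structure theorem that a connected medial quandle is affine over an abelian group $A$, and then observe that transitivity of $\langle R_0\rangle$ on $A\setminus\{0\}$ forces every nonzero element of $A$ to have the same, necessarily prime, order, so that $A\cong\mathbb{F}_p^{\,k}=\mathbb{F}_q$ and $\langle R_0\rangle$ is a Singer cycle; on this route, proving mediality from the cyclic-type hypothesis is the corresponding hard point.
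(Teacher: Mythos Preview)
The paper does not contain a proof of this theorem: it is quoted from \cite{Wada} as background at the end of Section~\ref{review}, and the paper then takes it as an input for the presentation and classification results in Section~\ref{cyclic}. There is therefore nothing in the present paper to compare your attempt against.

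On the substance of your sketch: the backward direction and the argument that $\inn(X)$ is $2$-transitive are correct, and the global plan (sharp $2$-transitivity $\Rightarrow$ Zassenhaus $\Rightarrow$ near-field with cyclic multiplicative group $\Rightarrow$ field $\Rightarrow$ affine quandle with primitive multiplier) is a coherent strategy. However, as you explicitly acknowledge, the passage from $2$-transitivity to \emph{sharp} $2$-transitivity is not actually proved; the remarks about $\mathrm{Dis}(X)_a$ and block systems under $R_a$ are only a heuristic, and this is exactly the point where the quandle axioms (not just the permutation-group hypothesis that each $R_i$ is an $(n-1)$-cycle) must be exploited. The same applies to your alternative route: proving mediality of $X$ directly from the cyclic-type hypothesis is again the substantive step, and you have not supplied it. So your proposal identifies the right skeleton but leaves the load-bearing lemma unproved in both variants; to compare with Wada's actual argument you would need to consult \cite{Wada}, since the present paper does not reproduce it.
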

    
 \section{Quandles of cyclic type: a presentation and classification}\label{cyclic}
   To facilitate the representation theory explored later in this paper, we provide in this section a presentation of the Alexander quandles of cyclic type $(\mathbb{F}_q,  \alpha)$.  

Let $p$ be a prime,  $q = p^s$ for some $s \geq 1$, and let $\alpha$ be a primitive element in the finite field $\mathbb{F}_q$ of order $q$.  For any nonzero element $r \in \mathbb{F}_q$, we define $\log_{\alpha} r$ to be the unique integer $0 \leq \log_{\alpha} r \leq q-2$ such that 
    \[ \alpha^{\log_{\alpha} r } = r.\] 

    For elements $x_1, x_2, \ldots, x_s$ in a quandle $X$,  we will denote by $x_1 x_2 \cdots x_s$ the left-associated element $( \cdots( ( x_1 \rRack x_2) \rRack x_3 ) \cdots \rRack x_s)$.   If $S$ is the set of elements $\left\{ x_1, x_2, \ldots, x_s \right\}$, we will say that an element $u \in X$ can be {\it left-associated in $S$} if we can write $u = x_{q_1} x_{q_2} \cdots x_{q_t}$, for $x_{q_i} \in S$.  
    
    Let $F_{q, \alpha}$ be the quandle with presentation 
    \[ F_{q,  \alpha} =  \left\langle x, y \; \; | \; \; ab^{q-1} = a, \; \; ab^k = ba^{\log_{\alpha}(1-\alpha^k)} \right\rangle_{a \neq b \in \left\{ x, y \right\},  \; \; \; 1 \leq k \leq q-2}.\]
\begin{lem}\label{log_relations}
Let $u, v$ be nonzero elements in $\mathbb{F}_{q, \alpha}$, $n \in \mathbb{Z}_+$, and  $s \in \mathbb{Z}_{\geq 0}$.
\end{lem}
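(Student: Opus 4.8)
The plan is to verify the asserted relations by reducing each of them to a short computation in the explicit model $(\mathbb F_q,\alpha)$, and to fall back on the abstract presentation of $F_{q,\alpha}$ only where necessary. Recall that in $(\mathbb F_q,\alpha)$ one has $u\rRack v=\alpha u+(1-\alpha)v$, so a trivial induction yields the closed form
\[
uv^{n}=R_v^{\,n}(u)=\alpha^{n}u+(1-\alpha^{n})v\qquad(n\ge 0),
\]
and, since $R_v$ is bijective, the same formula is valid for all $n\in\mathbb Z$. I will also use two elementary facts about the primitive element $\alpha$: its multiplicative order is exactly $q-1$, so $\alpha^{n}$ depends only on $n\bmod(q-1)$ and $\alpha^{n}=1$ iff $(q-1)\mid n$; and $\bar n\mapsto\alpha^{n}$ is a bijection $\mathbb Z/(q-1)\mathbb Z\to\mathbb F_q^{\times}$, which is precisely what makes $\log_\alpha$ well defined. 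In particular $1-\alpha^{n}\in\mathbb F_q^{\times}$ exactly when $(q-1)\nmid n$, so each expression $\log_\alpha(1-\alpha^{n})$ occurring in the statement is legitimate under exactly that hypothesis.

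With the closed form in hand, each claimed identity should become a one-line verification. Periodicity statements such as $uv^{q-1}=u$, or $uv^{n}=uv^{n'}$ whenever $n\equiv n'\pmod{q-1}$, follow at once from $\alpha^{q-1}=1$; left-association statements of the form $(uv^{n})v^{s}=uv^{n+s}$ follow from $R_v^{\,n+s}=R_v^{\,s}\circ R_v^{\,n}$. The key rewriting $uv^{n}=vu^{\log_\alpha(1-\alpha^{n})}$, valid when $(q-1)\nmid n$, is checked by expanding both sides: the left side equals $\alpha^{n}u+(1-\alpha^{n})v$, while with $m=\log_\alpha(1-\alpha^{n})$ — so that $\alpha^{m}=1-\alpha^{n}$ — the right side is $\alpha^{m}v+(1-\alpha^{m})u=(1-\alpha^{n})v+\alpha^{n}u$, and the two agree. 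Any mixed identity that first raises the exponent of one generator and then of the other is treated in the same way: expand using the closed form, collect the coefficients of $u$ and of $v$, observe that the result again has the shape $\alpha^{(\cdot)}u+(1-\alpha^{(\cdot)})v$, and read off the exponent as a $\log_\alpha$; the only algebraic inputs needed are $\alpha^{\log_\alpha(1-\alpha^{m})}=1-\alpha^{m}$ and the telescoping identity $1-\alpha^{j}(1-\alpha^{k})=1-\alpha^{j}+\alpha^{j+k}$.

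If instead the statement is to be read inside the abstractly presented quandle $F_{q,\alpha}$ — i.e.\ as formal consequences of the two defining families of relations, prior to knowing $F_{q,\alpha}\cong(\mathbb F_q,\alpha)$ — then the same identities follow by induction on the exponents: the relation $ab^{q-1}=a$ supplies periodicity, right distributivity together with the quandle axiom $a\rRack a=a$ lets one transport and collapse exponents, and the relation $ab^{k}=ba^{\log_\alpha(1-\alpha^{k})}$ is used to interchange the two generators, its iteration producing composite exponents that must then be reduced modulo $q-1$. The only genuine difficulty is precisely this bookkeeping: at each step one must confirm that the argument of every $\log_\alpha$ is nonzero (equivalently that the relevant exponent is $\not\equiv 0\pmod{q-1}$) and that the composite exponent produced by iterating the interchange relation matches the predicted $\log_\alpha$ expression — and the two identities displayed just above are exactly what make these matches come out right.
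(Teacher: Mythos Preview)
Your handling of the pure $\log_\alpha$ identities (parts (2)--(4)) and of the rewriting $xy^{\log_\alpha u}=yx^{\log_\alpha(1-u)}$ (part (5)) matches the paper's: these are standard field arithmetic, respectively an immediate substitution $u=\alpha^k$ into the defining relations of $F_{q,\alpha}$.

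The gap is in part (1), which asserts $zx^{q-1}=z$ for an \emph{arbitrary} $z\in F_{q,\alpha}$, not just for the other generator. Your primary method --- compute in the model $(\mathbb F_q,\alpha)$ via the closed form $uv^n=\alpha^n u+(1-\alpha^n)v$ --- is circular here: this lemma precedes the isomorphism $F_{q,\alpha}\cong(\mathbb F_q,\alpha)$, and indeed Lemma~\ref{lem:generators} (needed for Theorem~\ref{cyclic_classification}) cites part (1). At this stage $z$ is only an equivalence class of words in $x,y$; no field arithmetic is available. Your fallback does not repair this either. Saying ``the relation $ab^{q-1}=a$ supplies periodicity'' overlooks that this relation is imposed only for $a,b\in\{x,y\}$; the whole content of (1) is to promote it from generators to arbitrary $z$. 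And ``induction on the exponents'' is the wrong induction --- once $zx^{q-1}=z$ is known, reducing $zx^n$ modulo $q-1$ is trivial. The paper instead inducts on the word-length of $z$: iterating right distributivity in $k$ gives $(uw)y^{k}=(uy^k)(wy^k)$ for all $u,w$, hence $(uw)y^{q-1}=(uy^{q-1})(wy^{q-1})$, and the induction hypothesis applied to the shorter subwords collapses both factors. That structural induction on $z$ is what your sketch is missing.
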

    \begin{enumerate}
        \item For any $z \in F_{q ,\alpha}$,  we have $zx^n = zx^{n \; \text{mod } (q-1)}$, and similarly for $zy^n$.
        \item For any nonzero $u \in \mathbb{F}_q$, 
        \[ -\log_{\alpha}(u) \equiv \log_{\alpha}(u^{-1}) \text{ mod } (q-1).\]
        \item For any nonzero $u,v \in \mathbb{F}_q$,
        \[ \log_{\alpha} uv \equiv \log_{\alpha} u + \log_{\alpha} v \text{ mod } (q-1).\]
        \item  For any $b, u \in \mathbb{F}_q$, $u \neq 0$,   \[
            b^{\log_{\alpha} u} b^s = b^{\log_{\alpha}( u \alpha^s)}.\]
        \item      For any $u \in \mathbb{F}_q \backslash \left\{ 0, 1 \right\}$,   \[
             xy^{\log_{\alpha}(u)} = yx^{\log_{\alpha}(1-u)}.\]
            \end{enumerate}      
\begin{proof}
Parts (2) - (4) are standard properties of $\log_{\alpha}$ over a finite field, and (5) follows since any element $u \in \mathbb{F}_q$ can be written as $\alpha^k$.  Only (1) requires proof.  For this argument we need a well--defined notion of the length of an element in $F_{q, \alpha}$, given as follows.

  We can define the set $\mathcal{A}$ of admissible words in the symbols 
$x, y, \rRack, \left( \right. $ and $\left. \right) $ inductively: we have $x, y \in \mathcal{A}$, and for $u, v \in \mathcal{A}$, $(u) \rRack (v) \in \mathcal{A}$.  Furthermore we write $(x) = x$ and $(y) = y$ for any occurrence of $(x)$ or $(y)$ in a word. Elements of $F_{q,\alpha}$ consist of equivalence classes of admissible words modulo the defining relations. For an element $u \in F_{q, \alpha}$, we define its length $\ell(u)$ to be the smallest integer $n$ for which there exists a sequence $\left\{ s_i \right\}_{i=1}^n$, $s_i \in \left\{ x, y \right\}$, such that $u$ has a representative written with $\left\{ s_i \right\}_{i=1}^n$.  For example, if $q = 5$ and $u = y((xy^4)x) \in F_{q, \alpha}$, the length of $u$ is 2, since $y((xy^4)x) = y(xx) = yx$.  
  
The argument for (1) is brief.  For any $u \in F_{q, \alpha}$, a short inductive argument on $k$ shows that $uxy^{k} = (uy^k)(xy^k)$ for any positive integer $k$.  Therefore $(ux)y^{q-1} = (uy^{q-1})(xy^{q-1})$, and another inductive argument on the length of $u$ gives us the result in (1).
\end{proof}

    We note several consequences of $\log_{\alpha}$ and the given relations.

    \begin{lem}\label{lem:generators}
    Any element of $F_{q, \alpha}$ can be  left-associated  in $\left\{ x, y \right\}$, the set containing the generators.
    \end{lem}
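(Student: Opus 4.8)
The plan is to prove this by structural induction on the length $\ell(u)$ of an element $u \in F_{q,\alpha}$, using the length function established in the proof of Lemma~\ref{log_relations}. The base case is immediate: elements of length $1$ are exactly $x$ and $y$ (after applying the conventions $(x)=x$, $(y)=y$), which are already left-associated in $\{x,y\}$.

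For the inductive step, suppose every element of length less than $n$ can be left-associated in $\{x,y\}$, and let $u$ have length $n \geq 2$. Pick a representative admissible word for $u$; it has the form $(v) \rRack (w)$ for admissible words $v, w$ with $\ell(v), \ell(w) < n$ (strictly, since $\ell(u) = n$ and each of $v,w$ contributes at least one generator symbol). By the inductive hypothesis, both $v$ and $w$ can be left-associated in $\{x,y\}$, so after rewriting we may assume $u = v \rRack w$ where $v = s_1 s_2 \cdots s_a$ and $w = t_1 t_2 \cdots t_b$ are left-associated words in the generators. The task is then to show that $v \rRack (t_1 t_2 \cdots t_b)$ can itself be rewritten as a left-associated word. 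The key tool is the identity $u x y^k = (u y^k)(x y^k)$ (and its analogue with $x,y$ swapped) proved inductively in Lemma~\ref{log_relations}, together with right distributivity $(p \rRack r) \rRack q = (p \rRack q) \rRack (r \rRack q)$; iterating these lets one "push" a left-associated word $w$ past a single generator, and hence — by induction on $b = \ell(w)$ — express $v \rRack w$ in left-associated form.

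More concretely, I would set up an inner induction on $\ell(w)$. When $\ell(w) = 1$, say $w = t_1 \in \{x,y\}$, then $v \rRack w = s_1 s_2 \cdots s_a t_1$ is already left-associated. For the inductive step, write $w = w' t_b$ with $\ell(w') = \ell(w) - 1$; then $v \rRack w = v \rRack (w' t_b) = (v \rRack w') \rRack t_b$ by definition of left-association of $w$... no — rather one applies distributivity in the form $v \rRack (w' \rRack t_b) = \cdots$, and repeatedly uses the relation $ab^k = ba^{\log_\alpha(1-\alpha^k)}$ from the presentation together with Lemma~\ref{log_relations}(5) to collapse the resulting expression. The relations $ab^{q-1} = a$ and $ab^k = b a^{\log_\alpha(1-\alpha^k)}$ are precisely what guarantee that every time a generator $a$ is conjugated by a power of the other generator $b$, it returns to a left-associated expression involving only powers of $x$ and $y$.

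The main obstacle I anticipate is the bookkeeping in the inner induction: carefully tracking how a left-associated word of the form $x^{i_1} y^{j_1} x^{i_2} \cdots$ transforms when acted on the right by a single generator, and verifying that the exponents (which live in $\mathbb{Z}/(q-1)$ by Lemma~\ref{log_relations}(1)) combine correctly via the logarithm identities in parts (2)–(4). In particular one must check that applying the relation $ab^k = ba^{\log_\alpha(1-\alpha^k)}$ never increases the "complexity" in a way that prevents termination — i.e. that the rewriting process genuinely reduces to a normal form — and handle the edge cases $k \equiv 0$ (where $1 - \alpha^k = 0$ and $\log_\alpha$ is undefined, so the relation $ab^{q-1}=a$ applies instead). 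Once the single-generator case is secure, the general statement follows by composing, since any left-associated word $w = t_1 \cdots t_b$ acts as the composition $R_{t_b} \circ \cdots \circ R_{t_1}$ of single-generator right multiplications.
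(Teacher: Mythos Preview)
Your plan has a real gap at the inner induction, and the paper's proof sidesteps the whole issue by a two-line argument you do not consider.

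The paper's proof: (i) invoke the general quandle fact (Winkler) that in any quandle generated by $\{x_i\}$, every element can be written as a left-associated word in the $x_i$ \emph{provided one allows both $\rRack$ and its right inverse $\rRack^{-1}$}; (ii) observe that in $F_{q,\alpha}$ the relation $ab^{q-1}=a$ of Lemma~\ref{log_relations}(1) gives $a \rRack^{-1} b = ab^{q-2}$, so every instance of $\rRack^{-1}$ is already $q-2$ iterated applications of $\rRack$. Done.

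Your inner induction stalls precisely where this observation is needed. To reduce $v \rRack (w' \rRack t_b)$ you must first rewrite $v = (v\,t_b^{\,q-2}) \rRack t_b$ (using $v\,t_b^{\,q-1}=v$) and \emph{then} read distributivity backwards:
\[
v \rRack (w' \rRack t_b) \;=\; \bigl((v\,t_b^{\,q-2}) \rRack t_b\bigr) \rRack (w' \rRack t_b) \;=\; \bigl((v\,t_b^{\,q-2}) \rRack w'\bigr) \rRack t_b,
\]
after which the inner hypothesis applies since $\ell(w')<\ell(w)$. You list both ingredients but never combine them; instead you reach for $ab^k = ba^{\log_\alpha(1-\alpha^k)}$, which only relates the two generators and does not perform this reduction. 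This is exactly the hidden use of $\rRack^{-1}$ that the paper makes explicit.

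Finally, your closing sentence is false: for a left-associated $w = t_1 t_2 \cdots t_b$ one has
\[
R_w \;=\; R_{t_b}\cdots R_{t_2}\,R_{t_1}\,R_{t_2}^{-1}\cdots R_{t_b}^{-1},
\]
a conjugate of $R_{t_1}$, not the composition $R_{t_b}\circ\cdots\circ R_{t_1}$. So ``composing single-generator right multiplications'' does not give $v\rRack w$, and that shortcut is unavailable.
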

    \begin{proof}
    Any quandle $X$ with operation $\rRack:  X \times X \rightarrow X$ also comes equipped with a second operation $\rRack^{-1}: X \times X \rightarrow X$, the \textit{right inverse} of $\rRack$, satisfying $(a \rRack b) \rRack^{-1} b = a$ for all $a, b \in X$.   If $X$ is generated by elements $\left\{ x_i \right\}_{i=1}^n$, then any element of $X$ can be left-associated in the generators $\left\{x_i\right\}$ if we allow both operations $\rRack$ and $\rRack^{-1}$ \cite{Winkler}; i.e., for any $a \in X$ We can write
    \[ a = (( \ldots ( x_{i_1} \rRack^{\pm 1} x_{i_2} ) \rRack^{\pm 1} x_{i_3} ) \ldots ) \rRack^{\pm 1} x_{i_k} \]
for appropriate choices of $\rRack$, $\rRack^{-1}$ and generators $x_{i_j}$.

Furthermore, for the quandle $X = F_{q, \alpha}$, the definining relations and Lemma 3.1 give us $a \rRack^{-1} b = ab^{q-2}$ for any $a, b \in F_{q, \alpha}$.  The conclusion follows. 
        \end{proof}
   \begin{lem}\label{relns2}
    Let $p$ be a prime, $q = p^n$, and $\alpha$ a primitive element in the field $\mathbb{F}_q$.  For nonnegative integers $r$ and $s$ with $r+s > 0$, set $\mu(r,s) = \alpha^{r+1} - \alpha^{s+1} + \alpha^s$.  In the quandle $F_{q, \alpha}$, we have the relations 
    \[ (xy^r)(xy^s) = \begin{cases}
    y,  &  \mu(r,s) = 0  \\
     xy^{\log_{\alpha}(\mu(r,s))},  & \mu(r,s) \neq  0. \end{cases} \]
    \end{lem}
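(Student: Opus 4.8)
The plan is to compute $(xy^r)(xy^s)$ directly inside $F_{q,\alpha}$ by rewriting it into the normal form $ya^{\bullet}$ or $xy^{\bullet}$ using only the defining relations of $F_{q,\alpha}$ together with the arithmetic of $\log_\alpha$ collected in Lemma~\ref{log_relations}. Recall that in the Alexander model $(\mathbb{F}_q,\alpha)$ we have $a\rRack b = \alpha a + (1-\alpha)b$, and the two generators correspond to the field elements $0$ and $1$ (say $x\leftrightarrow 0$, $y\leftrightarrow 1$); then $xy^r$ corresponds to $(1-\alpha^r)\cdot 1 = 1-\alpha^r$, since applying $R_y = R_1$ repeatedly to $0$ gives $0\mapsto 1-\alpha\mapsto 1-\alpha^2\mapsto\cdots$. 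Thus the statement is really the shadow of the field identity $R_{xy^s}(xy^r) = \alpha(1-\alpha^r) + (1-\alpha)(1-\alpha^s) = 1 - \alpha^{r+1} + \alpha - \alpha^{s} + \alpha^{s+1} - \alpha$ wait — more carefully, $\alpha(1-\alpha^r)+(1-\alpha)(1-\alpha^s) = \alpha-\alpha^{r+1}+1-\alpha^s-\alpha+\alpha^{s+1} = 1-(\alpha^{r+1}-\alpha^{s+1}+\alpha^s) = 1-\mu(r,s)$. So the claimed element is $1-\mu(r,s)$, which equals $1$ (i.e.\ $y$) exactly when $\mu(r,s)=0$, and otherwise is the nonzero element $1-\mu(r,s)$; writing $1-\mu(r,s) = 1-\alpha^{\log_\alpha\mu(r,s)}$ identifies it as $xy^{\log_\alpha\mu(r,s)}$ by Lemma~\ref{log_relations}(5). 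This heuristic both predicts the answer and guides the syntactic proof.

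The syntactic proof proceeds as follows. First I reduce to the case $s=0$: by Lemma~\ref{log_relations}(1) and the relation $ab^k = ba^{\log_\alpha(1-\alpha^k)}$, move factors so that $(xy^r)(xy^s)$ can be handled by first rewriting $xy^s$. Concretely, I apply the defining relation in the form $xy^s = yx^{\log_\alpha(1-\alpha^s)}$ (for $s\neq 0$; the case $s=0$ is the quandle axiom $x\rRack x = x$), so that $(xy^r)(xy^s) = (xy^r)\bigl(yx^{\log_\alpha(1-\alpha^s)}\bigr)$. Then I repeatedly use the ``exchange'' relation $ab^k = ba^{\log_\alpha(1-\alpha^k)}$ and its consequence in Lemma~\ref{log_relations}(4), $b^{\log_\alpha u}b^t = b^{\log_\alpha(u\alpha^t)}$, to slide the $x$-block and $y$-block past each other and collapse everything to a single normal-form word. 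Each such move changes an exponent by the recipe $k\mapsto \log_\alpha(1-\alpha^k)$ or by additive shifts mod $q-1$, and Lemma~\ref{log_relations}(2)(3) lets me combine these additively in the exponent; the bookkeeping terminates because each exchange strictly reduces the number of alternations between $x$- and $y$-letters, as in the proof of Lemma~\ref{lem:generators}.

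The main obstacle will be the exponent bookkeeping: tracking, through a chain of substitutions $k\mapsto\log_\alpha(1-\alpha^k)$ interleaved with additive shifts, that the final exponent simplifies to exactly $\log_\alpha(\mu(r,s))$, and simultaneously detecting the degenerate branch where an intermediate quantity $1-\alpha^k$ vanishes — precisely the case $\mu(r,s)=0$, which forces the word to collapse all the way to $y$ via $ab^{q-1}=a$ and $x\rRack x = x$. I expect the cleanest route is to do the computation once in the field model to pin down that the answer is $1-\mu(r,s)$, then run the symbolic rewriting only far enough to confirm it lands in normal form $xy^{\log_\alpha(\mu(r,s))}$ (resp.\ $y$), invoking the faithfulness of the presentation — i.e.\ that $F_{q,\alpha}\cong(\mathbb{F}_q,\alpha)$, which is established by Lemma~\ref{lem:generators} together with Wada's theorem — rather than grinding every case by hand. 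The boundary case $r=s$ deserves an explicit check: then $\mu(r,r) = \alpha^r$, which is never $0$, and $\log_\alpha(\alpha^r) = r$, recovering the quandle axiom $(xy^r)\rRack(xy^r) = xy^r$ as a sanity check.
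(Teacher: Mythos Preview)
Your field computation is correct and your instinct to transport the identity from $(\mathbb{F}_q,\alpha)$ back to $F_{q,\alpha}$ is sound in principle, but the justification you give for the isomorphism is wrong and potentially circular. Wada's theorem classifies abstract cyclic quandles as Alexander quandles; it says nothing about the \emph{presented} quandle $F_{q,\alpha}$. The statement $F_{q,\alpha}\cong(\mathbb{F}_q,\alpha)$ is exactly Theorem~\ref{cyclic_classification}, which sits \emph{after} this lemma in the paper. As it happens, the proof of Theorem~\ref{cyclic_classification} does not invoke Lemma~\ref{relns2} (it only needs Lemma~\ref{lem:generators} and a direct closure argument for the set $S$), so one could reorder the exposition and make your shortcut legitimate---but you must actually carry out that reordering and cite the right result, not Wada. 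As written, your ``syntactic'' portion is only a sketch: you never perform the exponent bookkeeping you yourself flag as the main obstacle, and the termination claim (``each exchange strictly reduces the number of alternations'') is neither stated nor proved in Lemma~\ref{lem:generators}.

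The paper's proof avoids the isomorphism entirely and is much shorter than the rewriting procedure you outline. The key structural identity is
\[
(xy^r)(xy^s) \;=\; \bigl((xy^{r-1})(xy^{s-1})\bigr)\,y,
\]
together with the arithmetic observation $\mu(r-1,s-1)\cdot\alpha = \mu(r,s)$. These two facts give an immediate induction on $r+s$ for the case $\mu(r,s)\neq 0$, after checking the two base cases $(r,s)=(1,0)$ and $(0,1)$ directly from the defining relations. The degenerate case $\mu(r,s)=0$ is handled separately via $(xy^r)(xy^s) = (xy^{r-s}x)y^s$ (the identity $uxy^k=(uy^k)(xy^k)$ from the proof of Lemma~\ref{log_relations}(1), read backwards), after which one applies $xy^{r-s}=yx^{\log_\alpha(1-\alpha^{r-s})}$ and observes that $\mu(r,s)=0$ forces $\alpha-\alpha^{r-s+1}=1$, collapsing the $x$-exponent to $\log_\alpha 1 = 0$. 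This is a clean two-line recursion; your approach trades that for a global appeal to the field model, which is fine once the isomorphism is in hand but is not self-contained at this point in the development.
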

    \begin{proof}
    Assume $\mu(r,s) =0$.  Then $0 = \alpha^{r+1} - \alpha^{s+1} - \alpha^s$, so $\alpha - \alpha^{r-s+1} = 1$. This also gives us $r-s \not \equiv 0 \mod (q-1)$ (else $\alpha -\alpha^{r-s+1} = \alpha - \alpha = 0$) and so $xy^{r-s} = yx^{\log_{\alpha}(1 - \alpha^{r-s})}$. 
    
 A brief inductive argument shows that $(xy^r)(xy^s) = (xy^{r-s}x)y^s$.  Let $m$ be any nonnegative integer such that $m(q-1)+r > s$.  Then since $xy^{q-1} = x$, we can write 
\begin{eqnarray*}
(xy^r)(xy^s) &=& (xy^{m(q-1) + r})(xy^s)\\
&=& (xy^{m(q-1) + r - s})(x)y^s\\
&=& yx^{\log_{\alpha}(1 - \alpha^{m(q-1)+r-s})\alpha}y^s\\
&=& yx^{\log_{\alpha}(1 - \alpha^{r-s}) \alpha} y^s\\
&=& yx^{\log_{\alpha}(\alpha - \alpha^{r-s+1})}y^s\\
&=& yx^{\log_{\alpha}(1)}y^s\\
&=& yx^0y^s = y^{s+1} = y.
\end{eqnarray*}

\pagebreak 

Now suppose $\mu(r,s) \neq 0$.  We prove this case by induction on the sum $r+s$.  We first verify the base case $r+s=1$ in two cases: $r=1$, $s=0$, and $r=0$, $s=1$.  First suppose $r=1$ and $s=0$.  Then 
    \[ (xy^r)(xy^s) = (xy)x=(yx^{log_{\alpha}(1-\alpha)})x = yx^{\log_{\alpha}(\alpha- \alpha^2)} = xy^{\log_{\alpha}(1-\alpha+\alpha^2)}=xy^{\log_{\alpha}(\mu(1,0))},\]
where we have used the assumption $\mu(1,0) \neq 0$ when writing $yx^{\log_{\alpha}(\alpha - \alpha^2)} = xy^{\log_{\alpha}(1 - \alpha + \alpha^2)}$. 

\noindent For $r=0$, $s=1$, we have 
\begin{eqnarray*}
(xy^r)(xy^s) &=& (x)(xy)\\
&=& (xy^{q-1})(xy) \\
&=& ((xy^{q-2})(x))y\\
&=& (yx^{\log_{\alpha}(1 -\alpha^{q-2})\alpha})y\\
&=& (yx^{\log_{\alpha}(\alpha - 1)})y\\
&=& xy^{\log_{\alpha}(1-(\alpha-1))\alpha}\\
&=& xy^{\log_{\alpha}(\alpha - \alpha^2 + \alpha)} = xy^{\log_{\alpha}(\mu(0,1))},
\end{eqnarray*} 
where we have used the assumption $\mu(0,1) \neq 0$ when writing $yx^{\log_{\alpha}(\alpha - 1)} = xy^{\log_{\alpha}(1 - (\alpha -1))}$. 

For the inductive step, we fix $r,s$ with $r+s > 1$, and assume the result is true for all indices $\tilde{r}, \tilde{s}$ with $\tilde{r} + \tilde{s}
 < r+s$.  Then since $\mu(r-1, s-1))\alpha = \mu(r,s)$, we have 
\begin{eqnarray*}
(xy^r)(xy^s) &=& (xy^{r-1})(xy^{s-1})y\\
&=& xy^{\log_{\alpha}(\mu(r-1, s-1)\alpha)}\\
&=& xy^{\log_{\alpha}(\mu(r,s))}.
\end{eqnarray*}
    \end{proof}

    \begin{thm}\label{cyclic_classification}
    Let $p$ be a prime,  $q = p^s$, and $\alpha$ a primitive element in $\mathbb{F}_q$.  Then the Alexander quandle $(\mathbb{F}_q, \alpha)$ is isomorphic to $F_{q, \alpha}$.  
    \end{thm}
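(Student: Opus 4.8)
The plan is to construct an explicit isomorphism $\Phi: F_{q,\alpha} \to (\mathbb{F}_q, \alpha)$ and verify that it is a well-defined bijective quandle homomorphism. The natural candidate is to send the generators $x, y$ of the presentation to two distinguished elements of $(\mathbb{F}_q, \alpha)$; since in the Alexander quandle $a \rRack b = \alpha a + (1-\alpha) b$, a convenient choice is $\Phi(x) = 1$ and $\Phi(y) = 0$ (or some similarly normalized pair), using the fact that $\{0,1\}$ generates $(\mathbb{F}_q,\alpha)$ because $\alpha$ is primitive. The first task is to check that $\Phi$ respects the defining relations of $F_{q,\alpha}$, namely $ab^{q-1} = a$ and $ab^k = ba^{\log_\alpha(1-\alpha^k)}$ for $a \neq b \in \{x,y\}$ and $1 \le k \le q-2$. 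The relation $ab^{q-1}=a$ becomes the statement that $R_b^{q-1}$ fixes $a$, which holds because $R_b$ has order dividing $q-1$ on $(\mathbb{F}_q,\alpha)$ (indeed $R_b^m(a) = \alpha^m a + (1-\alpha^m)b$, and $\alpha^{q-1}=1$). The relation $ab^k = ba^{\log_\alpha(1-\alpha^k)}$ unwinds, under $\Phi$, to an identity of the form $\alpha^k \cdot 1 + (1-\alpha^k)\cdot 0 = \alpha^{\log_\alpha(1-\alpha^k)}\cdot 0 + (1 - \alpha^{\log_\alpha(1-\alpha^k)})\cdot 1$, i.e. $\alpha^k = 1 - (1 - \alpha^k)$, which is a tautology; the general $a,b$ case is the same computation with $0$ and $1$ swapped. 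Hence $\Phi$ descends to a well-defined quandle homomorphism $F_{q,\alpha} \to (\mathbb{F}_q,\alpha)$.

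Next I would prove surjectivity: since $1 = \Phi(x)$ and $0 = \Phi(y)$ lie in the image, and the image is a subquandle of $(\mathbb{F}_q,\alpha)$, and $(\mathbb{F}_q,\alpha)$ is generated by $\{0,1\}$ as a quandle (because $\alpha$ primitive forces any subquandle containing $0$ and $1$ to contain all $\alpha$-linear combinations, hence all of $\mathbb{F}_q$), $\Phi$ is onto. The crux of the argument is injectivity, and this is where Lemma~\ref{relns2} and Lemma~\ref{lem:generators} do the heavy lifting. By Lemma~\ref{lem:generators} every element of $F_{q,\alpha}$ can be left-associated in $\{x,y\}$, and by repeated application of Lemma~\ref{relns2} together with Lemma~\ref{log_relations}(1), any left-associated word can be reduced to one of the normal forms $x$, $y$, $xy^k$, or $yx^k$ for $0 \le k \le q-2$ — this gives a surjection from the finite set of these normal forms onto $F_{q,\alpha}$, so $|F_{q,\alpha}| \le 2(q-1)$... one should be slightly careful here and pin down the exact count, but in any case $|F_{q,\alpha}|$ is finite and bounded. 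Then one computes $\Phi$ on these normal forms: $\Phi(xy^k) = R_0^k(1) = \alpha^k$ and $\Phi(yx^k) = R_1^k(0) = 1 - \alpha^k$, and since $\alpha$ is primitive the values $\alpha^k$ for $0 \le k \le q-2$ are exactly the $q-1$ nonzero elements of $\mathbb{F}_q$, while the values $1-\alpha^k$ sweep out $\mathbb{F}_q \setminus \{1\}$. Thus $\Phi$ already hits all $q$ elements of $(\mathbb{F}_q,\alpha)$ on the normal forms alone, forcing $|F_{q,\alpha}| \ge q$; combined with $\Phi$ being surjective this pins $\Phi$ down as a bijection (the normal-form reduction shows the normal forms collapse onto exactly $q$ classes, matching the fibers of $\Phi$).

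The main obstacle I anticipate is the bookkeeping in the injectivity step: one must show that the reduction-to-normal-form procedure is \emph{confluent enough} that the residual set of normal forms has size exactly $q$, not merely at most $2(q-1)$. Concretely, one needs the identifications among $\{x, y, xy^k, yx^k\}$ forced by Lemma~\ref{relns2} and Lemma~\ref{log_relations}(5) — e.g. $xy^{\log_\alpha(u)} = yx^{\log_\alpha(1-u)}$ identifies each $xy^k$ (for $\alpha^k \neq 1$, i.e. $k \neq 0$) with some $yx^j$, and $x = xy^0$, $y = yx^0$ — to exactly cut the $2(q-1)$ raw forms down to $q$ classes. Rather than chase confluence directly, the cleaner route is the dimension-counting argument above: exhibit $\Phi$ as a homomorphism that is both surjective and injective on the generating set of normal forms, using that $\Phi$ separates all $q$ normal-form values; since every element of $F_{q,\alpha}$ equals some normal form and distinct-valued normal forms cannot be equal in $F_{q,\alpha}$ (as $\Phi$ is a homomorphism), $\Phi$ is injective. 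I would close by remarking that this, together with Wada's theorem and the classification of primitive elements, also yields the complete classification of cyclic quandles asserted in the introduction as Theorem~\ref{cyclicclas}.
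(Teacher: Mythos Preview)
Your approach is correct and essentially matches the paper's: build the surjection $F_{q,\alpha} \twoheadrightarrow (\mathbb{F}_q, \alpha)$ by verifying the defining relations on $\{0,1\}$, then bound $|F_{q,\alpha}| \leq q$ via normal forms. The paper's execution of the cardinality bound is slightly more direct---it shows the $q$-element set $S = \{x, y, xy^r : 1 \leq r \leq q-2\}$ is closed under right multiplication by $x$ and $y$ using only the defining relations (Lemma~\ref{relns2} is not actually needed for this step), thereby avoiding your detour through $2(q-1)$ raw forms and the subsequent collapse via Lemma~\ref{log_relations}(5).
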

    \begin{proof}
    
    In $(\mathbb{F}_q, \alpha)$, the formulas $01^k = 1 - \alpha^k$ and $10^k = \alpha^k$ (for any nonnegative integer $k$) can be easily verified by the definition of the quandle operation in $(\mathbb{F}_q, \alpha)$ and an inductive argument in $k$.  In particular, this shows that $(\mathbb{F}_q, \alpha)$ is generated by $0$ and $1$.  Furthermore, as a consequence of these formulas we have
    \begin{eqnarray*}
    & & 01^{q-1} = 1 - \alpha^{q-1} = 1 - 1 =0,\\
    & & 10^{q-1} = \alpha^{q-1} = 1, 
    \end{eqnarray*}
    and for any integer $1 \leq k \leq q-2$, 
    \begin{eqnarray*}
    & & 10^{\log_{\alpha}(1-\alpha^k)} = \alpha^{\log_{\alpha}(1-\alpha^k)} = 1 - \alpha^k = 01^k,\\
    & & 01^{\log_{\alpha}(1-\alpha^k)} = 1 - \alpha^{\log_{\alpha}(1-\alpha^k)} = 1 - (1 - \alpha^k) = \alpha^k = 10^k.
    \end{eqnarray*}
This shows that $(\mathbb{F}_q, \alpha)$ is generated by two elements ($0$ and $1$) which satisfy the defining relations of $F_{q, \alpha}$, from which it follows that there exists a quandle epimorphism $F_{q, \alpha} \twoheadrightarrow (\mathbb{F}_q, \alpha)$. Now the result is proven once we prove $|F_{q, \alpha}| \leq |(\mathbb{F}_q, \alpha)|$.

      Consider the following set of elements in $F_{q, \alpha}$: 
    \[ S = \left\{  x, y,  xy^r \right\}_{1 \leq r \leq q-2}.\]
    We have $|S| \leq q$.  Now we will show that any other element in $F_{q, \alpha}$ is in $S$ as well.  By Lemma \ref{lem:generators}, any element in $F_{q, \alpha}$ can be left-associated in $\left\{ x,y \right\}$.  We proceed by induction on the number $k$ of $x$ and $y$ used to write such a left-associated element.   For $k=1$ and $2$, the statement is true using the relations in $F_{q, \alpha}$.    Now assume the statement is true for elements of length $\leq k$,  left-associated in $\left\{ x, y \right\}$, where $k \geq 3$.   Let $u$ be an element of length $k+1$,  left-associated in $\left\{ x, y \right\}$: 
    \[ u = r_{1} r_2 \cdots r_k r_{k+1}, \; \; \; r_i \in \left\{ x, y \right\}.\]
    By induction,  the element $ r_{1} r_2 \cdots r_k$ is equal to some element in $S$.  So the proof is complete once we show that $S$ is closed under `right multiplication' by $x$ or $y$.  This is easily seen for $y$.  For right multiplication by $x$, the only nontrivial observation is 
    \begin{eqnarray*}
    xy^rx &=& (xy^r)x \\
    &=& (yx^{\log_{\alpha}(1 - \alpha^r)}) x \\
    &=& yx^{\log_{\alpha}(1 - \alpha^r) + 1}\\
    &=& yx^{(\log_{\alpha}(1 - \alpha^r) + 1) \text{mod} (q-1) } \in S. \\
    \end{eqnarray*}
    Therefore $| F_{q,  \alpha} | \leq q$, and the isomorphism is proven.
    \end{proof}

\subsection{A complete classification of quandles of cyclic type}   The presentation of cyclic quandles given in Theorem \ref{cyclic_classification} is used in subsequent sections to prove some representation-theoretic results.  Although the remainder of this Section is not needed for any of the representation theory in the remainder of this paper, Theorem \ref{cyclic_classification} also provides us with a previously unknown result: the complete classification of quandles of cyclic type.  If $\alpha$ and $\beta$ are distinct primitive elements in the field $\mathbb{F}_q$, there was no way to determine if $(\mathbb{F}_q, \alpha)$ and $(\mathbb{F}_q, \beta)$ are isomorphic quandles.   In Theorem \ref{cyclicclas} below, we obtain a necessary and sufficient condition for this isomorphism, and also provide a simple formula to count the number of nonisomorphic quandles of a given order.  
    
    \begin{lem}\label{product1}  Let $m, n, k,$ and $r$ be nonnegative integers, $1 \leq r < q-1$, and $u, v \in F_{q, \alpha}$.  Then 
    \begin{enumerate} 
    \item $(xy^m)(xy^n)^k = xy^{\log_{\alpha}(\alpha^{m+k} - \alpha^{n+k} + \alpha^n) }$, 
    \item $uv^{r} = vu^{\log_{\alpha}(1 - \alpha^r)}$, 
    \item If $u \neq v$, then $uv^k = uv^{\ell}$ if and only if $ k \equiv \ell \text{ mod } (q-1)$.
    \end{enumerate}
    \end{lem}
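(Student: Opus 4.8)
The plan is to transport everything to the Alexander quandle $(\mathbb{F}_q,\alpha)$ via the isomorphism $F_{q,\alpha}\cong(\mathbb{F}_q,\alpha)$ of Theorem \ref{cyclic_classification}, under which $x\mapsto 0$ and $y\mapsto 1$. Being a homomorphism, this map sends $xy^m$ to $01^m=1-\alpha^m$ (the last equality is established in the proof of Theorem \ref{cyclic_classification}), and more generally identifies each $u\in F_{q,\alpha}$ with a unique scalar $\bar u\in\mathbb{F}_q$. The one computational fact needed is that iterated right multiplication is affine: if $R_{\bar v}(z)=z\rRack \bar v=\alpha z+(1-\alpha)\bar v$ is right multiplication by $\bar v$ in $(\mathbb{F}_q,\alpha)$, then $\bar v$ is its unique fixed point and $R_{\bar v}(z)-\bar v=\alpha(z-\bar v)$, so by induction $R_{\bar v}^{k}(z)=\bar v+\alpha^{k}(z-\bar v)$ for all $k\geq 0$. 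Hence $uv^{k}$ corresponds to $\bar v+\alpha^{k}(\bar u-\bar v)$, and all three parts become short scalar identities.

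For part (1), put $\bar u=1-\alpha^{m}$ and $\bar v=1-\alpha^{n}$; then $uv^{k}$ corresponds to $(1-\alpha^{n})+\alpha^{k}(\alpha^{n}-\alpha^{m})=1-\bigl(\alpha^{m+k}-\alpha^{n+k}+\alpha^{n}\bigr)$. When $\alpha^{m+k}-\alpha^{n+k}+\alpha^{n}\neq 0$ this is $1-\alpha^{j}$ with $j=\log_{\alpha}(\alpha^{m+k}-\alpha^{n+k}+\alpha^{n})$, which is the image of $xy^{j}$, giving the stated identity; when that quantity vanishes the element equals $y$, exactly as in Lemma \ref{relns2}, so part (1) is to be read with this proviso (and for $k=1$ it is precisely the case $\mu(m,n)\neq 0$ of Lemma \ref{relns2}). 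A purely internal alternative for (1) is induction on $k$: the cases $k\leq 1$ come from Lemma \ref{relns2}, and the inductive step applies Lemma \ref{relns2} to $(xy^{j})(xy^{n})$ with $\alpha^{j}=\alpha^{m+k}-\alpha^{n+k}+\alpha^{n}$, using $\alpha\cdot(\alpha^{m+k}-\alpha^{n+k}+\alpha^{n})=\alpha^{m+k+1}-\alpha^{n+k+1}+\alpha^{n+1}$ to see that $\mu(j,n)=\alpha^{m+(k+1)}-\alpha^{n+(k+1)}+\alpha^{n}$.

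For part (2), $uv^{r}$ corresponds to $\bar v+\alpha^{r}(\bar u-\bar v)$ while $vu^{\log_{\alpha}(1-\alpha^{r})}$ corresponds to $\bar u+(1-\alpha^{r})(\bar v-\bar u)$, and expanding shows these are equal; here $\log_{\alpha}(1-\alpha^{r})$ is well defined because $1\leq r<q-1$ forces $\alpha^{r}\neq 1$. For part (3), $uv^{k}$ and $uv^{\ell}$ correspond to $\bar v+\alpha^{k}(\bar u-\bar v)$ and $\bar v+\alpha^{\ell}(\bar u-\bar v)$, so $uv^{k}=uv^{\ell}$ iff $(\alpha^{k}-\alpha^{\ell})(\bar u-\bar v)=0$; since $u\neq v$ gives $\bar u-\bar v\neq 0$ in the field $\mathbb{F}_q$, this holds iff $\alpha^{k}=\alpha^{\ell}$, i.e.\ iff $k\equiv\ell\bmod(q-1)$ since $\alpha$ has multiplicative order $q-1$ (the direction $\Leftarrow$ also follows directly from Lemma \ref{log_relations}(1)).

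There is no serious obstacle: the only point requiring care is the bookkeeping of degenerate cases — the possibility $u=v$ in (1)–(3), where every expression collapses to $u$ by the quandle axiom, and the vanishing of $\alpha^{m+k}-\alpha^{n+k}+\alpha^{n}$ in (1), where the element is $y$ rather than some $xy^{j}$. The affine-iteration viewpoint makes all of these transparent, since the formula $uv^{k}\leftrightarrow \bar v+\alpha^{k}(\bar u-\bar v)$ is uniform and needs no separate treatment of such cases.
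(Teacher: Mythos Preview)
Your proof is correct, and for parts (1) and (2) it takes a genuinely different route from the paper. The paper proves (1) by induction on $k$ inside $F_{q,\alpha}$, using Lemma~\ref{relns2} as the base case and expanding $\mu(\log_\alpha(\cdots),n)$ at each step; it then proves (2) by another induction on $r$, first writing $u=xy^m$, $v=xy^n$ and applying (1) together with Lemma~\ref{relns2}, which leads to a rather long chain of $\log_\alpha$ manipulations. Only for part (3) does the paper pass to $(\mathbb{F}_q,\alpha)$ via Theorem~\ref{cyclic_classification}, exactly as you do. Your approach---transporting \emph{all three} parts through the isomorphism and using the single affine-iteration identity $R_{\bar v}^{k}(z)=\bar v+\alpha^{k}(z-\bar v)$---collapses the arguments for (1) and (2) to one-line scalar verifications and renders the treatment uniform. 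The paper's inductive route has the mild advantage of being self-contained in the presentation $F_{q,\alpha}$ (so it does not invoke Theorem~\ref{cyclic_classification}), but since the paper itself appeals to that isomorphism in part (3), this is not a real gain. You even note the internal inductive alternative for (1), which is precisely what the paper does. Your handling of the degenerate cases (the vanishing of $\alpha^{m+k}-\alpha^{n+k}+\alpha^{n}$ in (1), and $u=v$) is also more explicit than the paper's.
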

    \begin{proof}
    We prove (1) by induction on $k$.  The base case $k=1$ is given in Lemma \ref{relns2}.  For the inductive step, we have 
    \begin{eqnarray*}
    (xy^m)(xy^n)^{k+1} &=& (xy^m)(xy^n)^{k}(xy^n)\\
    &=& xy^{\log_{\alpha} (\alpha^{m+k} - \alpha^{n+k} + \alpha^n )} (xy^n)\\
    &=& xy^{\log_{\alpha} \mu(\log_{\alpha} (\alpha^{m+k} - \alpha^{n+k} + \alpha^n ),n )}\\
    &=& xy^{\log_{\alpha} ((\alpha^{m+k} - \alpha^{n+k} + \alpha^n )\alpha - \alpha^{n+1} + \alpha^n )} \\
    &=& xy^{\log_{\alpha}(\alpha^{m+k+1} - \alpha^{n+k+1} + \alpha^n) }.
    \end{eqnarray*}
    
    Part (2) is proven by induction on $r$.  We begin by representing $u$ and $v$ as 
     \[ u = xy^m, \; \; \; v = xy^n \]
     for some $0 \leq m, n \leq q-2$.  For the base case $r = 1$, Lemma \ref{product1}, item (1), gives us 
     \begin{eqnarray*}
     vu^{\log_{\alpha}(1 - \alpha)} &=& (xy^n)(xy^m)^{{\log_{\alpha}(1 - \alpha)} }\\
     &=& xy^{\log_{\alpha} (\alpha^{n + \log_{\alpha}(1-\alpha)} - \alpha^{m + \log_{\alpha}(1-\alpha)} + \alpha^m) }\\
     &=& xy^{\log_{\alpha} (\alpha^n(1-\alpha)-\alpha^m(1-\alpha) + \alpha^m ) }\\
     &=& xy^{\log_{\alpha}(\alpha^{m+1}+ \alpha^n - \alpha^{n+1} ) }\\
     &=&  xy^{\log_{\alpha}( \mu(m, n))} = (xy^m)(xy^n) = uv.
     \end{eqnarray*}
    For the inductive step, we have 
    \begin{eqnarray*}
        uv^{r+1} &=& uv^r v\\
                  &=& vu^{\log_{\alpha} (1 - \alpha^r)}v\\
                  &=& (xy^n)(xy^m)^{\log_{\alpha} (1 - \alpha^r)}xy^n\\
                  &=& xy^{\log_{\alpha}(\alpha^{n+\log_{\alpha} (1 - \alpha^r)} - \alpha^{m +\log_{\alpha} (1 - \alpha^r)} + \alpha^m)}xy^n \\
                  &=& xy^{\log_{\alpha}((1-\alpha^r)\alpha^n - (1 - \alpha^r) \alpha^m + \alpha^m)}xy^n \\
                  &=& xy^{\log_{\alpha} (\mu(\log_{\alpha}((1 - \alpha^r)\alpha^n - (1 - \alpha^r)\alpha^m + \alpha^m),n))}  \; \; \; \;\; \; \; \; \text{ (Lemma \ref{relns2}) }\\
                  &=& xy^{\log_{\alpha}[((1 -\alpha^r)\alpha^n - (1 - \alpha^r)\alpha^m + \alpha^m) \alpha+ \alpha^n -\alpha^{n+1}]}\\
                  &=& xy^{\log_{\alpha}(\alpha^{r+m+1} -\alpha^{r+n+1}  + \alpha^n) }\\
                  &=& xy^{\log_{\alpha}(\alpha^n(1 - \alpha^{r+1}) - \alpha^m(1 - \alpha^{r+1}) + \alpha^m)}\\
                  &=& xy^{\log_{\alpha}(\alpha^{n + \log_{\alpha} (1 - \alpha^{r+1})} - \alpha^{m +\log_{\alpha} (1 - \alpha^{r+1})} + \alpha^m)}\\
                  &=& (xy^n)(xy^m)^{\log_{\alpha}(1- \alpha^{r+1} )} \; \; \; \;\; \; \; \; \text{ (part (1)) }\\
                  &=& vu^{\log_{\alpha}(1- \alpha^{r+1} )}.
    \end{eqnarray*}
    We first prove Part (3) explicitly for $a, b$ in $(\mathbb{F}_q, \alpha)$ (in place of $u, v$ in $F_{q, \alpha}$).  Induction on $k$ shows that $ab^k = \alpha^k a + (1 - \alpha^k)b$. Suppose $ab^k = ab^{\ell}$.   If $a = 0$, then this assumption gives us 
    \[ (1 - \alpha^k) b = 0b^k = 0 b^{\ell} = (1- \alpha^{\ell}) b,\]
    and if $b = 0$ it gives us $\alpha^k a = \alpha^{\ell} a$.  Either way we obtain $\alpha^k = \alpha^{\ell}$, hence $k \equiv \ell \mod (q-1)$.

    If both $a, b  \neq 0$, we argue by contradiction, assuming $k \not \equiv \ell \mod (q-1)$.  Then $\alpha^k \neq \alpha^{\ell}$.  The assumption $ab^k = ab^{\ell}$ gives us 
    \[ \alpha^k a + (1- \alpha^k)b = \alpha^{\ell} a + (1 - \alpha^{\ell})b, \; \; \; \; \text{ so } \; \; \; \; (\alpha^k - \alpha^{\ell})a = (\alpha^k - \alpha^{\ell})b, \]
    and since $\alpha^k - \alpha^{\ell} \neq 0$, we have $a/b = (\alpha^k - \alpha^{\ell}) / (\alpha^k - \alpha^{\ell}) = 1$, which contradicts $a \neq b$.  This proves the result for $a, b$ in $(\mathbb{F}_q, \alpha)$.  Finally we use the isomorphism $F_{q, \alpha} \cong (\mathbb{F}_q, \alpha)$ to obtain the same result in $F_{q, \alpha}$.
    \end{proof}
    
Suppose $\alpha$ and $\beta$ are primitive elements in a finite field $F_q$, where $q = p^n$ for $p$ prime.  If $\alpha $ can be writen as a `prime power' of $\beta$, i.e., $\log_{\beta} \alpha = p^s$ for some integer $s$, then $\alpha = \beta^{p^s}$, and $\beta = \beta^{p^n} = (\beta^{p^s})^{p^{n-s}} = \alpha^{p^{n-s}}$, so that $\beta$ can also be written as a `prime power' of $\alpha$.  Hence this condition is an equivalence relation on the set of primitive elements of $F_q$.  We will say that two primitive elements are \textit{prime power equivalent} if this is the case.

\begin{lem}\label{loglemma}
Let $q=p^n$, p prime, and let $\alpha$, $\beta$ be two primitive elements in $\mathbb{F}_q$.  Then $\alpha$ and $\beta$ are prime-power equivalent if and only if  
\begin{equation}\label{logeqn}
\log_{\alpha}(1-\alpha^k) \equiv \log_{\beta}(1 - \beta^k), \; \; 0 < k < q-1.
\end{equation} 
\end{lem}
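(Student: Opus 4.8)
The plan is to relate \eqref{logeqn} to the field automorphisms of $\mathbb{F}_q$, which are precisely the powers $x\mapsto x^{p^s}$, $0\le s<n$, of the Frobenius endomorphism. Throughout, the key elementary fact is that $p^s$ is a unit modulo $q-1=p^n-1$.

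\textbf{Forward direction.} Suppose $\alpha$ and $\beta$ are prime-power equivalent, say $\beta=\alpha^{p^s}$. Since $x\mapsto x^{p^s}$ is a ring automorphism of $\mathbb{F}_q$ fixing $1$, for every $k$ with $0<k<q-1$ we get $1-\beta^k=1-\alpha^{p^sk}=(1-\alpha^k)^{p^s}$. Writing $1-\alpha^k=\alpha^{\log_\alpha(1-\alpha^k)}$ and $1-\beta^k=\beta^{\log_\beta(1-\beta^k)}=\alpha^{p^s\log_\beta(1-\beta^k)}$, the identity $(1-\alpha^k)^{p^s}=1-\beta^k$ becomes $\alpha^{p^s\log_\alpha(1-\alpha^k)}=\alpha^{p^s\log_\beta(1-\beta^k)}$; comparing exponents modulo $q-1$ and cancelling the unit $p^s$ yields \eqref{logeqn}.

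\textbf{Converse.} This is the substantial direction. Given \eqref{logeqn}, I would define $\psi\colon\mathbb{F}_q\to\mathbb{F}_q$ by $\psi(0)=0$ and $\psi(\alpha^k)=\beta^k$. Because $\alpha$ and $\beta$ are primitive (hence of multiplicative order $q-1$), $\psi$ is a well-defined bijection with $\psi(1)=1$ and $\psi(xy)=\psi(x)\psi(y)$. The decisive step is to prove that $\psi$ is additive; once that is done, $\psi$ is a ring automorphism of $\mathbb{F}_q$, hence $\psi(x)=x^{p^s}$ for some $s$, so $\beta=\psi(\alpha)=\alpha^{p^s}$ and $\alpha,\beta$ are prime-power equivalent. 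To establish additivity, abbreviate $\sigma_\gamma(m):=\log_\gamma(1-\gamma^m)$ for $0<m<q-1$, so that \eqref{logeqn} says exactly $\sigma_\alpha=\sigma_\beta$. For nonzero $x=\alpha^a$, $y=\alpha^b$, write $x+y=\alpha^b(1+\alpha^{a-b})$. If $1+\alpha^{a-b}=0$ then $\alpha^{a-b}=-1$, which in odd characteristic forces $a-b\equiv(q-1)/2$ (the unique order-$2$ exponent, the same for $\alpha$ and $\beta$) and in characteristic $2$ forces $a\equiv b$; either way $\beta^a+\beta^b=0=\psi(x+y)$. If $1+\alpha^{a-b}\ne0$, then $1+\alpha^{a-b}=\alpha^{\sigma_\alpha(c)}$, where $c=(a-b)+(q-1)/2$ in odd characteristic and $c=a-b$ in characteristic $2$ (in both cases $c\not\equiv0$ precisely because $1+\alpha^{a-b}\ne0$), so $x+y=\alpha^{b+\sigma_\alpha(c)}$ and $\psi(x+y)=\beta^{b+\sigma_\alpha(c)}$; the identical computation with $\beta$ in place of $\alpha$ gives $\beta^a+\beta^b=\beta^{b+\sigma_\beta(c)}$, and these agree since $\sigma_\alpha=\sigma_\beta$. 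The cases $x=0$ or $y=0$ are immediate. Hence $\psi(x+y)=\psi(x)+\psi(y)$ for all $x,y\in\mathbb{F}_q$, and $\psi\in\Aut(\mathbb{F}_q)$, which is generated by the Frobenius automorphism.

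\textbf{Expected main obstacle.} Everything else is routine: the forward direction, the multiplicativity and bijectivity of $\psi$, and the structure of $\Aut(\mathbb{F}_q)$. The one genuinely nontrivial point is the additivity of $\psi$, whose content is the classical observation that the quantities $\log_\gamma(1-\gamma^m)$ are, up to a shift by $(q-1)/2$ in odd characteristic, the \emph{Zech logarithm} recovering the field addition from the multiplicative group; the remaining work there is the bookkeeping for the degenerate cases ($x+y=0$, characteristic $2$, and $x$ or $y$ zero).
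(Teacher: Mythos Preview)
Your argument is correct, and it takes a genuinely different route from the paper's for the converse direction. (The forward direction is essentially the same Frobenius computation in both.)

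For the converse, the paper sets $N=\log_\beta\alpha$, observes that \eqref{logeqn} is equivalent to $(1-\beta^k)^N=1-\alpha^k$ for all $0<k<q-1$, and hence that every element of $\mathbb{F}_q^*$ is a root of $G(x)=(1-x)^N+x^N-1$. Writing $N=p^sv$ with $p\nmid v$, an expansion shows that the lowest-degree term of $G$ is $\pm v\,x^{p^s}$, which is nonzero when $v>1$; since $\deg G\le N<q-1$ this contradicts $G$ having $q-1$ roots, forcing $v=1$ and $N=p^s$. This is an elementary polynomial degree count requiring nothing beyond the binomial theorem in characteristic $p$.

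Your approach instead packages \eqref{logeqn} as the statement that the multiplicative bijection $\psi\colon\alpha^k\mapsto\beta^k$ is additive via Zech logarithms, and then invokes the classification $\Aut(\mathbb{F}_q)=\langle\text{Frobenius}\rangle$. This is more conceptual: it explains \emph{why} the condition is equivalent to prime-power equivalence, namely because the Zech logarithm table encodes the additive structure relative to a chosen primitive element, and two primitive elements share the same table precisely when they are Galois conjugates. The paper's argument, by contrast, avoids appealing to the structure of $\Aut(\mathbb{F}_q)$ and is self-contained at the level of polynomial identities. Your case analysis (the handling of $x+y=0$ and the $(q-1)/2$ shift in odd characteristic versus characteristic $2$) is done correctly; the only point to be slightly careful about is that $c$ lands in the range $0<c<q-1$ so that \eqref{logeqn} applies, and you have verified this.
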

\begin{proof}
 Set $N = \log_{\beta} \alpha$.  Then (\ref{logeqn}) is equivalent to 
\[ \left( 1 - \beta^k\right)^N = 1 - \alpha^k, \; \; 0 < k < q-1.\]
Note that $N$ must be odd, for if $N$ were even, then the choice $k = \log_{\beta}2$ would give us 
\[
 1  = (-1)^N 
 = \left( 1 - \beta^k\right)^N 
 = 1 - \alpha^k 
 = 1 - 2^N,\]
hence $2^N = 0$ - but no such $N$ exists.   Also note that $1 \leq N < q-1$, because $\beta^{q-1} = 1$.

\noindent Now suppose $\alpha$ and $\beta$ are prime power equivalent, so that $N$ is a power of $p$: $N = p^s$ for some positive integer $s$.  Then we have
\[ \left( 1 - \beta^k \right)^N = \left( 1 - \beta^k \right)^{p^s} =  1^{p^s} - (\beta^k)^{p^s} =  1 - (\beta^{p^s})^k =  1 - \alpha^k \]
for all $k$, and (\ref{logeqn}) is established.

\noindent For the opposite direction, assume $N = p^sv$, where $s \geq 0$ and $v \not \equiv 0 \mod p$, and suppose that the system (\ref{logeqn}) holds.  Let $G(x) = (1-x)^N +x^N -1 \in \mathbb{F}_q[x]$.  Let $r$ be any nonzero element of $\mathbb{F}_q$,  and set $k = \log_{\beta} r$.  Then using (\ref{logeqn}) we have 
\[    (1-r)^N = \left( 1 - \beta^{\log_{\beta} r} \right)^N = 1 -\alpha^{\log_{\beta}r}= 1 - r^N,\]
hence every element of $\mathbb{F}_q^*$ is a root of $G(x)$.  Now we claim $G(x) \neq 0$.  Expanding the first term $(1-x)^N$ gives us 
\[
    (1-x)^N = \left( 1-x \right)^{p^sv} = (1 - x^{p^s})^v = 1 - x^N + vx^{p^s} + \sum_{t = 2}^{v-1} \binom{v}{k}(-x^{p^s})^{t},\]
hence $G(x) = vx^{p^s} + (\text{\textit{higher-order terms}})$.  In particular, since $v \not \equiv 0 \mod p$, $G(x) \neq 0$.
Therefore $G(x)$ is a nontrivial polynomial with $1 \leq  \deg(G) < q-1$, and so it has less than $q-1$ roots - contradicting the fact that $G(r) = 0$ for all $r \in F_q$.  This contradiction shows that the system (\ref{logeqn}) cannot hold, and the reverse direction is proven.

\end{proof}

\begin{thm}\label{cyclicclas} Let $q=p^n$,  $p$ prime, and let $\alpha$, $\beta$ be two primitive elements in $\mathbb{F}_q$.  
\begin{enumerate}
    \item The quandles $F_{q, \alpha}$ and $F_{q, \beta}$ are isomorphic if and only if $\alpha$ and $\beta$ are prime--power equivalent. 
    \item There are a total of $\displaystyle{\frac{\phi(p^n - 1)}{n}}$ isomorphism classes of cyclic quandles of order $p^n$, where $\phi$ is Euler's totient function. 
\end{enumerate}   
\end{thm}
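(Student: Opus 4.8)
The plan is to transport everything to the Alexander--quandle side via Theorem~\ref{cyclic_classification} (which identifies $F_{q,\alpha}$ with $(\mathbb{F}_q,\alpha)$) and to make both directions of (1) rest on Lemma~\ref{loglemma}, which already reduces prime--power equivalence of $\alpha,\beta$ to the numerical system $\log_\alpha(1-\alpha^k)\equiv\log_\beta(1-\beta^k)\pmod{q-1}$ for $0<k<q-1$; since each $\log_\alpha(1-\alpha^k)$ is by definition the unique representative in $[0,q-2]$, this congruence is just equality. For the ``if'' direction of (1): if $\alpha$ and $\beta$ are prime--power equivalent, Lemma~\ref{loglemma} gives $\log_\alpha(1-\alpha^k)=\log_\beta(1-\beta^k)$ for all $1\le k\le q-2$, so the defining relations $ab^{q-1}=a$ and $ab^k=ba^{\log_\alpha(1-\alpha^k)}$ of $F_{q,\alpha}$ are literally the same relations on the same generators $x,y$ as those of $F_{q,\beta}$; hence the two presented quandles coincide, in particular $F_{q,\alpha}\cong F_{q,\beta}$. (Equivalently, writing $\alpha=\beta^{p^s}$, the $s$-fold Frobenius $z\mapsto z^{p^s}$ is a field automorphism of $\mathbb{F}_q$ carrying the operation $\beta x+(1-\beta)y$ to $\alpha x+(1-\alpha)y$, which gives an explicit isomorphism $(\mathbb{F}_q,\beta)\to(\mathbb{F}_q,\alpha)$.)

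For the ``only if'' direction, suppose $\Phi\colon F_{q,\alpha}\to F_{q,\beta}$ is an isomorphism and put $u=\Phi(x)$, $v=\Phi(y)$, which are distinct since $\Phi$ is injective and $x\ne y$. Applying $\Phi$ to the defining relation $xy^k=yx^{\log_\alpha(1-\alpha^k)}$ of $F_{q,\alpha}$ yields $uv^k=vu^{\log_\alpha(1-\alpha^k)}$ in $F_{q,\beta}$, while Lemma~\ref{product1}(2), applied inside $F_{q,\beta}$ with its own primitive element $\beta$, yields $uv^k=vu^{\log_\beta(1-\beta^k)}$. Comparing, $vu^{\log_\alpha(1-\alpha^k)}=vu^{\log_\beta(1-\beta^k)}$, and since $u\ne v$, Lemma~\ref{product1}(3) forces $\log_\alpha(1-\alpha^k)\equiv\log_\beta(1-\beta^k)\pmod{q-1}$; as both sides lie in $[0,q-2]$ they are equal. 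This holds for every $1\le k\le q-2$, so Lemma~\ref{loglemma} shows $\alpha$ and $\beta$ are prime--power equivalent. I expect this step---converting an abstract quandle isomorphism into the arithmetic system governed by Lemma~\ref{loglemma}---to be the only real point of the argument; the genuinely analytic content (the polynomial count behind $G(x)\ne 0$) has already been discharged in Lemma~\ref{loglemma}.

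For (2): by Wada's theorem quoted above together with Theorem~\ref{cyclic_classification}, every cyclic quandle of order $p^n$ is isomorphic to $F_{p^n,\alpha}$ for some primitive $\alpha\in\mathbb{F}_{p^n}$, and conversely each such $F_{p^n,\alpha}$ is cyclic of order $p^n$; by part (1), the isomorphism classes are in bijection with the prime--power equivalence classes among the $\phi(p^n-1)$ primitive elements of $\mathbb{F}_{p^n}$. It then remains to check that each class $\{\alpha,\alpha^p,\dots,\alpha^{p^{n-1}}\}$ has exactly $n$ elements: if $\alpha^{p^i}=\alpha^{p^j}$ with $0\le i<j\le n-1$, then $p^n-1$ divides $p^i(p^{j-i}-1)$, and since $\gcd(p,p^n-1)=1$ this would require $p^n-1\mid p^{j-i}-1$ with $0<p^{j-i}-1<p^n-1$, which is impossible. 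Hence the primitive elements split into blocks of size $n$, and the number of isomorphism classes is $\phi(p^n-1)/n$.
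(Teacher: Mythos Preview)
Your proof is correct and follows essentially the same route as the paper's: both directions of (1) are reduced, via Lemma~\ref{product1}(2),(3), to the numerical criterion of Lemma~\ref{loglemma}, and (2) is deduced by counting prime--power equivalence classes. The only cosmetic difference is that for the ``only if'' direction the paper pulls the generators of $F_{q,\beta}$ back along $\Phi^{-1}$ and applies Lemma~\ref{product1}(2) in $F_{q,\alpha}$, whereas you push the generators of $F_{q,\alpha}$ forward along $\Phi$ and apply Lemma~\ref{product1}(2) in $F_{q,\beta}$; your ``if'' argument (identical presentations, or alternatively the Frobenius map) and your size--$n$ orbit count in (2) are slightly more explicit than what the paper writes, but the content is the same.
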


\begin{proof} Assume $\phi: F_{q, \alpha} \rightarrow F_{q, \beta}$ is a quandle isomorphism, and let $u$ and $v$ be the preimages in $F_{q, \alpha}$ of the generators $x$ and $y$ in $F_{q, \beta}$.  By Lemma \ref{product1} item (2), for any $1 \leq k < q-1$ we have $uv^k = vu^{\log_{\alpha} (1 - \alpha^k)}$.  Applying $\phi$ to this relation gives us 
$xy^k = yx^{\log_{\alpha}(1- \alpha^k)}$.  But since $x,y$ are  the generators in $\in F_{q, \beta}$, we have $xy^k = yx^{\log_{\beta}(1 - \beta^k)}$, so 
\[ yx^{\log_{\alpha}(1- \alpha^k)} = xy^k = yx^{\log_{\beta}(1- \beta^k)}.\]
Therefore by Lemma \ref{product1} item (3), we have $\log_{\alpha}(1-\alpha^k) \equiv \log_{\beta}(1 - \beta^k)$ for all $1 \leq k <q-1$, and then Lemma \ref{loglemma} gives us our result.  

If we now assume that $\alpha$ and $\beta$ are prime--power equivalent, then we have 
\[ \log_{\alpha}(1-\alpha^k) \equiv \log_{\beta}(1 - \beta^k)\]
for all $1 \leq k <q$.  If $F_{q,\alpha}$ is generated by elements $x,y$, and $F_{q,\beta}$ is generated by elements $u, v$, define a map $\phi:F_{q,\alpha} \rightarrow F_{q, \beta}$ by $\phi(x) = u$, $\phi(y) = v$.  Then we can show that $\phi$ is one--to--one, and Lemma \ref{loglemma} now ensures that the defining relations of $F_{q, \alpha}$ are satisfied.

Item (2) is an immediate corollary: given a primitive element $\alpha$ in $\mathbb{F}_q$, any prime power $\alpha^{p^s}$ is also primitive, but $\alpha^{p^n} = \alpha$.  So the corresponding prime--power equivalence class is $\left\{ \alpha, \alpha^p, \ldots, \alpha^{p^{n-1}} \right\}$, with $n$ elements.  
\end{proof}

\begin{ex}
Let $p = 5$.  We construct a field of order $5^3$ as a quotient field of the ring $\mathbb{Z}_5[x]$.  The polynomial $m(x) = x^3 + x + 1$ is irreducible, hence $\mathbb{F}_{5^3} \cong \mathbb{Z}_5[x] / (x^3 + x+ 1)$.  Consider the (left cosets of the) following elements: 
\[ \alpha := 2x^2 + x + 2, \; \; \; \; \beta := 3x^2 + 2x + 1, \; \; \; \; \gamma := 3x^2 + 4x+3\]
These elements are all primitive, since $|\alpha| = |\beta| = |\gamma| = 124$.  Moreover, $\alpha^{5^2} = \beta$, hence $\beta^5 = \alpha$.  So $\alpha$ and $\beta$ are prime--power equivalent primitive elements.  However, $\alpha^{5^s} \neq \gamma$ for any $s$.   So by Theorem \ref{cyclicclas}, we can conclude:
\[ F_{125, \alpha} \cong F_{125, \beta} \not \cong F_{125, \gamma}.\]
There are a total of $\phi(124) = 60$ primitive elements in $\mathbb{F}_{125}$, and the order of each prime--power equivalence class is 3 (e.g., $\left\{ \alpha, \alpha^5, \alpha^{25} \right\}$ is one of these equivalence classes).  So there are a total of 60/3 = 20 equivalence classes, from which we can conclude: up to isomorphism, there are 20 (pairwise nonisomorphic) cyclic quandles of order 125.
\end{ex}

        \section{Representation theory}\label{rep}
   A \emph{representation} of a finite quandle $(X,*)$ on a finite dimensional complex vector space $V$ is a quandle homomorphism $\rho: X \rightarrow GL(V)$, where the automorphism group $GL(V)$ of $V$ is considered as a quandle with conjugation (see \cite{EM}).  In other words, for all $x,y \in X$, we have $\rho(x*y)=\rho(y)\rho(x)\rho(y)^{-1}.$ To simplify the notation, we will denote $\rho(x)$ by just $\rho_x$.  Let $V$ and $W$ be two representations of a quandle $X$.  A map from the representation $V$ to $W$ is a linear map $\phi: V \rightarrow W$ which makes the following diagram commute:
        \[
        \xymatrix{
        V \ar[r]^{\phi} \ar[d]_{\rho^V_x} & W \ar[d]^{\rho^W_x} \\
        V \ar[r]^{\phi} & W.
        }
        \]
        If furthermore $\phi$ is an isomorphism then we say that the two representations are \emph{equivalent}.  If $\phi: V \hookrightarrow W$ is the inclusion of the linear subspace $V$ into $W$, then we say that $V$ is a \emph{subrepresentation} of $W$.\\
        A representation $V$ of a quandle $X$ is called \emph{irreducible} if its only subrepresentations are $\{0\}$ and $V$, and \emph{completely reducible} if it can be written as a direct sum of irreducible subrepresentations.  A representation ia called \emph{indecompasable} if it cannot be written as direct sum of nontrivial subrepresentations.  Thus, clearly every irreducible representation is indecomposable.
        
        \subsection{The regular representation} \label{reg}
        
        For any finite quandle $X$, we denote by $\C X $ the $\C$--vector space of $\C$-valued functions on $X$; equivalently it is the $\C$-vector space generated by basis vectors $\left\{ e_x \right\}_{ x \in X}$,  whose elements are formal sums $f = \sum_{x \in X} a_x e_x$, $a_x \in \C$.  The \textit{regular representation} of $X$ is 
        \[ \lambda: X \rightarrow \text{Conj}(GL(\C X)), \]
        where $\lambda_t(f)(x): = f(R_t^{-1}(x))$. This action of $X$ is equivalent to the right $X$--action on $\C X$ given by the linear extension of $e_x \cdot t = e_{ R_t(x) }=e_{x*t}$.   We note that a subspace $W$ of $\C X$ is a quandle subrepresentation of $
        \C X$ if and only if $W$ is a subrepresentation of $\C X$ as a group representation of $\inn(X)$.
        
        When $|X| > 2$, The regular representation always has two non--trivial subrepresentations, one of which is always irreducible.  This irreducible subrepresentation is the one--dimensional subspace $\C \one$, where $\one$ is the constant function $\one (x) = 1$, for all $x \in X$. Its vector space complement, $(\C \one)^\perp$, is also a subrepresentation, since $(\C \one)^\perp = \left\{ \sum_{x \in X} a_x e_x |\; \sum a_x = 0 \right\}$, and action by any $t \in X$ only permutes the coefficients $a_x$.  So $\C X$ decomposes into a direct sum of subrepresentations 
        \[ \C X = \C \one \oplus (\C \one)^\perp.\]
        For $x, y \in X$ we denote $v_{xy} = e_x - e_y$.  If we enumerate $X = \left\{ x_1, \ldots, x_n \right\}$, then a basis for $(\C \one)^\perp$ is $\left\{ v_{x_1 x_2 }, v_{x_2 x_3}, \ldots, v_{x_{n-1}x_n} \right\}$.
        
        \begin{pro}\label{compdec}
        	The regular representation of a quandle $X$ is completely reducible. 
        	\end{pro}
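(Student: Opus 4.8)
\emph{Proof proposal.} The plan is to reduce the statement to Maschke's theorem for the finite group $\inn(X)$, using the equivalence between quandle subrepresentations of the regular representation and $\inn(X)$-subrepresentations recorded just above. First I would note that, since $X$ is finite, $\inn(X) \subseteq S_X$ is a finite group, and that each operator $\lambda_t$ ($t \in X$) is the permutation operator on $\C X$ induced by $R_t$; the group generated by the $\lambda_t$ inside $GL(\C X)$ is exactly the image of the permutation representation of $\inn(X)$ on $\C X$. Consequently a subspace $W \subseteq \C X$ is invariant under all $\lambda_t$ if and only if it is invariant under all of $\inn(X)$, i.e., $W$ is a quandle subrepresentation of the regular representation precisely when it is a subrepresentation of $\C X$ viewed as a complex representation of the finite group $\inn(X)$.

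Next I would invoke Maschke's theorem: $\C X$ is a finite-dimensional representation of the finite group $\inn(X)$ over $\C$, a field of characteristic zero, so it decomposes as $\C X = \bigoplus_{i=1}^{m} V_i$ with each $V_i$ an irreducible $\inn(X)$-subrepresentation. (Equivalently, one averages an arbitrary Hermitian inner product on $\C X$ over $\inn(X)$ to obtain an $\inn(X)$-invariant inner product, and then peels off orthogonal complements of proper nonzero subrepresentations by induction on $\dim$.) By the equivalence of the previous paragraph, each $V_i$ is a quandle subrepresentation of the regular representation.

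Finally I would check that each $V_i$ is irreducible \emph{as a quandle representation}: if $W$ is a nonzero quandle subrepresentation with $W \subseteq V_i$, then $W$ is in particular a subspace of $\C X$ invariant under all $\lambda_t$, hence — again by the equivalence — an $\inn(X)$-subrepresentation contained in $V_i$, so $W = V_i$ by irreducibility of $V_i$ over $\inn(X)$. Therefore $\C X = \bigoplus_{i=1}^m V_i$ exhibits the regular representation as a direct sum of irreducible quandle subrepresentations, so it is completely reducible.

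I do not expect a genuine obstacle here: the only point requiring care is that the notions of \emph{subrepresentation} and \emph{irreducibility} for quandle representations agree, on the regular representation, with those for $\inn(X)$-representations, and this is exactly the remark quoted before the proposition. It is worth emphasizing, though, that the argument uses finiteness of $\inn(X)$ in an essential way: for a general quandle representation $\rho\colon X \to GL(V)$ the subrepresentations still coincide with the invariant subspaces of the group generated by the $\rho_x$, but that group need not be finite (or otherwise reductive), which is precisely why complete reducibility — and Maschke's theorem — can fail beyond the regular representation, as shown later in the paper.
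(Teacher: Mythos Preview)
Your argument is correct and follows exactly the paper's approach: observe that quandle subrepresentations of $\C X$ coincide with $\inn(X)$-subrepresentations, then apply Maschke's theorem for the finite group $\inn(X)$ over $\C$. Your write-up simply makes explicit the transfer of irreducibility that the paper leaves implicit.
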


        \begin{proof} 
        A subspace $W$ of $\C X$ is a quandle subrepresentation of $
        \C X$ if and only if $W$ is a subrepresentation of $\C X$ as a group representation of $\inn(X)$.  Then Maschke's Theorem gives us the result.
        \end{proof}

        \section{The regular representations of dihedral quandles}\label{Rep Dihedral}
        In this section,  we obtain an explicit decomposition of the regular representation of a dihedral quandle.   We denote by $(\Z_n, a)$ the linear quandle $\Z_n$ with operation $x \star y = ax + (1-a)y$, for any $a$ invertible in $\Z_n$.    We denote by $\text{Inn}(\mathbb{Z}_n)$ the group of inner automorphisms of the dihedral quandle $\mathbb{Z}_n$; this is the subgroup of $\Aut(\mathbb{Z}_n)$ generated by the right multiplication operators $R_i$.  
        \begin{thm}
        \cite{EMR} If $n$ is even, $\inn(\mathbb{Z}_n) = D_{n/2}$, the dihedral group of order $n$.  If $n$ is odd, $\inn(\mathbb{Z}_n) = D_{n}$.
        \end{thm}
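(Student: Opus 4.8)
The plan is to compute $\inn(\Z_n)$ directly from the explicit form of the right multiplications. The dihedral quandle operation is $x \rRack y = 2y - x \pmod n$, so $R_i$ is the affine permutation $R_i(x) = 2i - x$ of $\Z_n$. First I would record two elementary facts. Each $R_i$ is an involution, since $R_i(R_i(x)) = 2i - (2i-x) = x$. And, writing $\tau_c$ for the translation $x \mapsto x+c$ and $s$ for the reflection $x \mapsto -x$ (so that $s = R_0$, reading indices mod $n$), one has $R_i = \tau_{2i}\,s$, because $R_i(s(x)) = 2i-(-x) = x + 2i$. Consequently every generator $R_i$, and hence every element of $\inn(\Z_n)$, lies in $T \cup Ts$, where $T := \{\,\tau_{2k} : k \in \Z_n\,\}$ is the group of ``even'' translations; conversely $T \subseteq \inn(\Z_n)$ since $R_i R_j = \tau_{2(i-j)}$, and $s = R_0 \in \inn(\Z_n)$. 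Therefore $\inn(\Z_n) = T \cup Ts$, a group of order $2|T|$ containing $T$ as an index-$2$ subgroup.

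Next I would identify $T$ and verify that this extension is dihedral. The assignment $\tau_{2k} \mapsto 2k$ is an isomorphism of $T$ onto the subgroup $2\Z_n \le \Z_n$, which is cyclic, being a subgroup of a cyclic group. If $n$ is odd, then $2$ is invertible modulo $n$, so $2\Z_n = \Z_n$ and $|T| = n$; if $n$ is even, then $2\Z_n = \{0,2,\dots,n-2\}$ has order $n/2$, so $|T| = n/2$. For the dihedral structure: $s^2 = \Id$; moreover $s\,\tau_c\,s(x) = s(-x+c) = x-c = \tau_{-c}(x)$, so conjugation by $s$ inverts every element of the cyclic group $T$; and $s \notin T$, since $s = \tau_c$ would force $-x = x+c$ for all $x$, i.e.\ $n \mid 2$, contrary to $n \ge 3$. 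Hence $\inn(\Z_n)$ is the split extension of a cyclic group of order $|T|$ by an involution acting as inversion, namely the dihedral group of order $2|T|$. With the convention adopted in the statement, that $D_m$ denotes the dihedral group of order $2m$, this gives $\inn(\Z_n) \cong D_n$ for $n$ odd (order $2n$) and $\inn(\Z_n) \cong D_{n/2}$ for $n$ even (order $n$).

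The argument is elementary and I do not anticipate any genuine obstacle; the only points needing a little care are the bookkeeping around the ``order $2m$'' convention for $D_m$, and the small cases. For instance, when $n = 4$ the coincidences $R_2 = R_0$ and $R_3 = R_1$ collapse $\inn(\Z_4)$ to the Klein four-group $D_2$ of order $4$, which is exactly what the formula $D_{n/2}$ predicts.
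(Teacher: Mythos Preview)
Your argument is correct. The paper does not supply its own proof of this statement; it is quoted from \cite{EMR} as a known fact and used without further justification. So there is no ``paper's proof'' to compare against, and your computation stands on its own as a clean verification.

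For the record, the approach you take---writing $R_i = \tau_{2i}\,s$ with $s(x) = -x$ and $\tau_c(x) = x+c$, identifying $\inn(\Z_n)$ with $T \cup Ts$ for $T = \{\tau_{2k}\}$, and then reading off the dihedral structure from $s\tau_c s = \tau_{-c}$---is exactly the standard way this result is established (and is essentially how it is done in \cite{EMR}). A couple of minor remarks: the paper indexes $\Z_n = \{1,\dots,n\}$ rather than $\{0,\dots,n-1\}$, so your $R_0$ is their $R_n$, but this is purely notational. Your justification that $T \cup Ts$ is closed under products relies implicitly on $sTs^{-1} = T$, which you do establish later; it might read more smoothly to state that first. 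The edge case you flag ($n=4$, giving the Klein four-group $D_2$) is handled correctly.
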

        
        We describe the (isomorphism classes of) finite--dimensional irreducible group representations of $D_n$ here, to facilitate statement of results below.  The dihedral group $D_n$ has a presentation given by 
        \[ D_n = \left\langle \alpha, \beta \; \; | \; \; |\alpha| = |\beta| = 2, \; |\alpha \beta | = n \right\rangle. \]
        We fix generators $\alpha, \beta$ for this presentation.  The classification of the finite--dimensional irreducible group representations of $D_n$ falls into two cases: $n$ even and $n$ odd.  In either case, let $\omega_r = e^{2 \pi i / r}$.  If $\lambda, \mu \in \C$, we will denote by $\C(\lambda, \mu)$ the one--dimensional group representation of $D_n$ on which $\alpha$ and $\beta$ act by scalar multiplication by $\lambda$ and $\mu$, respectively.  And for $s \in \Z$, we will denote by $W(\omega_r^s)$ the two--dimensional group representation of $D_n$ for which the matrix representations of $\alpha$ and $\beta$ are 
        \[ \alpha \mapsto \begin{bmatrix}
        0 & 1 \\ 
        1 & 0 
        \end{bmatrix}, \; \; \; \; \beta \mapsto \begin{bmatrix}
        0 & \omega_r^s \\
        \omega_r^{-s} & 0 
        \end{bmatrix}.
        \]
        With this notation fixed, we can give the classification of the finite--dimensional irreducible group representations of $D_n$.  All such representations are either 1-- or 2-- dimensional, and are described here:
        
        \begin{center}
        \begin{tabular}{|c|c|c|}
        \hline 
        $D_n$ & $n = 2k$ even & $n = 2k+1$ odd \\
        \hline
        & & \\
        1-dimensional  & $\C(1,1)$, $\C(1,-1)$, & \\ group representations &  
           $\C(-1,1)$, $\C(-1,-1) $& $\C(1,1)$, $\C(-1,-1)$\\
        && \\
        \hline
        &&\\
        2-dimensional &$W(\omega_n^s)$ , & $W(\omega_n^s)$ , \\ group representations &   $1 \leq s \leq k -1 $ &   $1 \leq s \leq k $ \\
        &&\\
        \hline
        \end{tabular}
        \end{center}
        We will denote by $\Gamma(D_n)$ the set of (isomorphism classes of) finite--dimensional irreducible group representations of $D_n$.  For example, $D_8$ has 4 representations of dimension 1 and 3 irreducible representations of dimension 2, and they are 
        \[ \Gamma(D_{8}) = \left\{ \C( \pm 1,\pm 1),  \; \; \C( \pm 1, \mp 1), \; \; W(\omega_8), \; \; W(\omega_8^2), \; \; W(\omega_8^3) \right\}, \]
        while $D_9$ has 2 representations of dimension 1 and 3 irreducible representations of dimension 2, and they are 
        \[ \Gamma(D_{9}) = \left\{ \C( \pm 1,\pm 1),  \; \; W(\omega_9), \; \; W(\omega_9^2), \; \; W(\omega_9^3), \; \; W(\omega_9^4) \right\}. \]
        Now we consider the regular representation $\C \Z_n $ of $\Z_n$ and the subrepresentation $(\C \one)^\perp$, in cases $n$ even or $n$ odd. In either case, $(\C \one)^\perp$ is spanned by $\left\{ v_{ij} \right\}_{1 \leq i, j \leq n }$. 
        
        If $n = 2k$ is even, there are two orbits of $\Z_{n}$ under the quandle action: the even orbit $\left\{ 2, 4, \ldots, 2k \right\}$, and the odd orbit $\left\{ 1, 3, \ldots, 2k-1 \right\}$.  Let 
        \[ \Phi_{n,0} = \text{sp}\left\{ v_{ij} \right\}_{i \neq j \text{ even}}, \; \; \; \Delta_{n,0} = \left\{ v_{24}, v_{46}, \ldots, v_{2k-2 \; 2k} \right\}, \]
         and 
         \[ \Phi_{n,1} = \text{sp}\left\{ v_{ij} \right\}_{i \neq j \text{ odd}}, \; \; \; \Delta_{n,1} = \left\{ v_{13}, v_{35}, \ldots, v_{2k-3 \; 2k-1} \right\}. \]
        We call $\Phi_{n,0}$ and $\Phi_{n,1}$ the even and odd subspaces, respectively, of $(\C \one)^\perp$.   For $i = 0, 1$, $\Delta_{n,i}$ is a basis for $\Phi_{n,i}$.
        \\
        
        If $n$ is odd, we set 
         \[ \Phi_n= \text{sp}\left\{ v_{ij} \right\}_{i \neq j}, \; \; \; \Delta_n = \left\{ v_{12}, v_{23}, \ldots, v_{n-1 \; n} \right\}, \]
         and $\Delta_n$ is a basis for $\Phi_n$.
        
        \begin{thm}\label{dihedral} If $n$ is even, then the decomposition of $\C\mathbb{Z}_n$ into irreducible subrepresentations of $\mathbb{Z}_n$ is, for $n = 4k$, 
          \[ \C\Z_n  \cong  \C( 1, 1)^{\oplus 2} \oplus \C( \pm 1, \mp 1) \bigoplus_{s=1}^{k-1} W(\omega_{2k}^s)^{\oplus 2} \]
        \hspace{.5 in}  And for $n = 4k+2$, \[ \C\Z_n  \cong\C( 1, 1)^{\oplus 2} \bigoplus_{s=1}^{k} W(\omega_{2k+1}^s)^{\oplus 2}. \]
        \\
        \\
        If $n$ is odd, $n = 2r+1$,  then the decomposition of $\C\mathbb{Z}_n$ into irreducible subrepresentations of $\mathbb{Z}_n$ is 
        \[ \C\mathbb{Z}_n  \cong \C (1,1) \bigoplus_{s = 1}^{r} W(\omega_n^{2s}). \]
        \end{thm}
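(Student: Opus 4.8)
The plan is to reduce everything to the representation theory of the dihedral \emph{groups}. By the remark recalled in Section~\ref{reg}, a subspace of $\C\Z_n$ is a quandle subrepresentation if and only if it is a subrepresentation for the group $G := \inn(\Z_n)$; hence it suffices to decompose $\C\Z_n$ as a $G$-module, and by the cited theorem of \cite{EMR} we have $G = D_{n/2}$ when $n$ is even and $G = D_n$ when $n$ is odd. The first step is to describe $\Z_n$ as a $G$-set: since $R_iR_j$ acts as translation by $2(i-j)$ and each $R_i$ as a reflection, the $G$-orbits are the cosets of the subgroup $2\Z_n$, so $\Z_n$ is a single orbit when $n$ is odd and splits into the even and the odd orbit, each of size $n/2$, when $n$ is even. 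Correspondingly, $\C\Z_n$ is the $G$-permutation module on $\Z_n$ when $n$ is odd, and the direct sum of the $G$-permutation modules on the two orbits when $n$ is even.

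Next I would decompose each permutation module by Fourier analysis on the rotation subgroup. Fix generators $\alpha,\beta$ of $G$ as in the dihedral presentation, and let $c = \alpha\beta$ be the corresponding rotation; identifying an orbit $O$ of size $m$ with $\Z_m$, the element $c$ acts as translation by a fixed amount, which must be recorded carefully: it is translation by $-1$ on the relabelled even and odd orbits in the even case, but translation by $-2$ on $\Z_n$ in the odd case --- this factor $2$, coming from $x\rRack y = 2y-x$, is exactly the source of the exponent $2s$ appearing in the odd case of the theorem. In the Fourier basis $f_j := \sum_{t\in\Z_m}\omega_m^{jt}\,e_t$ the translation $c$ is diagonal, and each reflection in $G$ interchanges the lines $\C f_j$ and $\C f_{-j}$ up to a scalar. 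Hence $\C f_0$ contributes a copy of $\C(1,1)$; if $m$ is even, $\C f_{m/2}$ contributes a one--dimensional representation on which $\alpha$ and $\beta$ act by scalars in $\{\pm1\}$; and for each $j$ with $0<j<m/2$ the plane $\C f_j\oplus\C f_{-j}$ is a two--dimensional irreducible representation, necessarily $W(\omega_m^s)$ with $s$ read off from the $c$-eigenvalue on $f_j$. Summing over the orbit (or the two orbits), collecting multiplicities, and checking the dimension against $n$ yields the three displayed formulas.

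The main obstacle is the label bookkeeping in this last step: matching $\C f_j\oplus\C f_{-j}$ with the correct $W(\omega_r^s)$, and --- in the case $n=4k$ --- seeing that the sign--type line $\C f_{m/2}$ contributes $\C(-1,1)$ for one orbit and $\C(1,-1)$ for the other, so that together they give the summand $\C(\pm1,\mp1)$ and not $\C(1,-1)^{\oplus 2}$ or $\C(-1,-1)^{\oplus 2}$. Concretely this requires fixing explicit generators $\alpha = R_a$, $\beta = R_b$ of $G$, pushing them through the identifications of the even orbit $\{2,4,\dots\}$ and the odd orbit $\{1,3,\dots\}$ with $\Z_m$, and computing their action on $f_{m/2}$ in each case; the two answers differ precisely by the parity of the translation introduced in relabelling the two orbits. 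As a shortcut that avoids the Fourier basis for the multiplicities, one may instead compute the permutation character $g\mapsto|\mathrm{Fix}_{\Z_n}(g)|$ of $G$ on $\Z_n$ (equal to $n$ at the identity, to $0$ on every nontrivial rotation, and constantly $2$ on reflections if $n$ is even, constantly $1$ if $n$ is odd) and pair it against the characters listed for $\Gamma(D_n)$; for example when $n=4k$ this gives multiplicity $1+\tfrac{\varepsilon_1+\varepsilon_2}{2}$ for $\C(\varepsilon_1,\varepsilon_2)$, so that $\C(1,-1)$ and $\C(-1,1)$ each occur once. Even with this shortcut the exact labels $W(\omega_n^{2s})$ in the odd case still need the eigenvalue computation, or equivalently the observation that $\{\,\omega_n^{2s}:1\le s\le r\,\}$ already enumerates all the two--dimensional irreducibles of $D_{2r+1}$.
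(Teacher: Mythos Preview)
Your approach is correct and is genuinely different from the paper's.  The paper works inside $(\C\one)^\perp$ with the ad hoc bases $\Delta_{n,0}$, $\Delta_{n,1}$ (and $\Delta_n$ in the odd case), writes down the matrices of $R_1$, $R_2$ in these bases, manufactures explicit vectors $\mathbf u_s=\sum_i(1-\omega_r^{si})v_{2i,2i+2}$ and $\mathbf v_s=R_1\mathbf u_s$, checks by hand that $\mathrm{span}(\mathbf u_s,\mathbf v_s)\cong W(\omega_r^s)$, and then proves linear independence of the $\mathbf u_s$ by a Vandermonde argument.  You instead recognise $\C\Z_n$ as a permutation module for $G=\inn(\Z_n)$, split it along the $G$--orbits, and Fourier--decompose each orbit with respect to the rotation $c=\alpha\beta$; the pairing $f_j\leftrightarrow f_{-j}$ under reflections immediately yields the $2$--dimensional pieces, and your observation that $c$ acts by translation by $-2$ on $\Z_n$ when $n$ is odd is exactly what produces the labels $W(\omega_n^{2s})$.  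Your character--theoretic shortcut (pairing the permutation character with the list $\Gamma(D_n)$) is a further alternative the paper does not use.

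What each route buys: the paper's construction is more laborious but hands you, for every irreducible summand, an explicit cyclic generator written in the basis $\{e_x\}$, which is precisely what feeds the tables in the subsequent examples.  Your argument is shorter and more conceptual, uses only off--the--shelf facts about permutation modules of dihedral groups, and makes the multiplicity--two phenomenon in the even case transparent (two isomorphic orbits).  One point where your outline is actually more complete than the paper's proof: for $n=4k$ the paper never checks in the proof that the two one--dimensional pieces $W_{r/2,0}$ and $W_{r/2,1}$ really are $\C(-1,1)$ and $\C(1,-1)$ rather than two copies of the same sign character, whereas your parity computation on $f_{m/2}$ (carried out with $\alpha=R_1$, $\beta=R_2$ and the two relabellings $2t\leftrightarrow t$, $2t+1\leftrightarrow t$) does settle this.
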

        \begin{proof} For arbitrary $n$, let $\hat{\mathbf{1}} =\sum_{i=1}^n (-1)^{i}e_i \in ( \C \mathbf{1})^\perp$.  Then $\C\mathbb{Z}_n$ decomposes as $ \C\mathbb{Z}_n = \C \mathbf{1} \oplus (\C \mathbf{1})^\perp$, and $\C \mathbf{1} \cong \C \hat{\mathbf{1}} \cong \C(1,1)$.  We proceed in the cases $n$ even or $n$ odd. 
        
        \noindent \textbf{Case 1:} $n= 2r$.  In this case, each $\Phi_{n,i}$ (for $i=0,1)$ is a subrepresentation of $\C\mathbb{Z}_{n}$ of dimension $r-1$, and $(\C \mathbf{1})^\perp$ decomposes as 
        \[ (\C \mathbf{1})^\perp = \C \hat{\mathbf{1}} \oplus \Phi_{n,0} \oplus \Phi_{n,1}. \]
        Set $[ a_1, \ldots, a_{r-1} ]_0 = \displaystyle{\sum_{i=1}^{r-1}{a_i}v_{2i, 2i+2}}$ (the coordinate vector in the basis $\Delta_{n,0}$).  The right multiplication operators $R_1$ and $R_2$ together generate all of $\text{Inn}(\Z_n) \cong D_{r}$, and with respect to the basis $\Delta_{n,0}$, the matrix representations of these operators on the regular representation are 
        \begin{equation}\label{matrixrep}
         \left[ R_1 \right]_{\Delta_{n, 0}} = \begin{bmatrix}
        0 & 0 & \cdots & 0 &-1 \\
        0 & 0 & \cdots & -1 & 0 \\
        \vdots & \vdots & \reflectbox{$\ddots$}  & \vdots & \vdots \\
        0 & -1 & \cdots & 0 & 0 \\
        -1 & 0 & \cdots & 0 & 0
        \end{bmatrix}
        , \; \; \; \;  \left[ R_2 \right]_{\Delta_{n, 0}} = \begin{bmatrix}
        1 & 0 & \cdots & 0 & 0 \\
        1 & 0 & \cdots &  0 & -1 \\
        \vdots & \vdots & \reflectbox{$\ddots$}  & \vdots & \vdots \\
        1 & 0 & \cdots & 0 & 0 \\
        1 & -1 & \cdots & 0 & 0
        \end{bmatrix}.
        \end{equation}
        For example, for $n = 12$, we have 
        \[ R_1\left(  \left[ a_1, a_2, a_3, a_4, a_5 \right]_0 \right) = \left[  -a_5, -a_4, -a_3, -a_2, -a_1 \right]_0,  \]
        \[ R_2\left(  \left[ a_1, a_2, a_3, a_4, a_5 \right]_0 \right) = \left[  a_1, a_1 -a_5, a_1 - a_4, a_1 -a_3, a_1 - a_2 \right]_0. \]
        
        \noindent  Let  $\omega_r = e^{2 \pi i / r}$, and  for $1 \leq s \leq r-1$, let 
        \[ \mathbf{u}_s  :=  \left[ 1-\omega_r^s, 1 - (\omega_r^s)^2, \cdots, 1 - (\omega_r^s)^{r-1} \right]_0 = \sum_{i=1}^{r-1} \left(1-(\omega_r^s)^i\right) v_{2i, 2i+2}, \;  \text{ and } \;  \mathbf{v}_s := R_1(\mathbf{u}_s).\]
        Then $\mathbf{v}_s = \displaystyle{\sum_{i=1}^{r-1} \left((\omega_r^s)^{r-i}-1 \right) v_{2i, 2i+2}}$, and   
        \begin{eqnarray*}
        R_2(\mathbf{u}_s) &=& \sum_{i=1}^{r-1} \left((\omega_r^s)^{r+1-i}-\omega_r^s \right) v_{2i, 2i+2}\\
        &=& \omega_r^s  \sum_{i=1}^{r-1} \left((\omega_r^s)^{r-i}-1 \right) v_{2i, 2i+2}\\
        &=& \omega_r^s \mathbf{v}_s,
        \end{eqnarray*}
        and since all $R_i$ are involutions, we also have $R_2(\mathbf{v}_s) = (\omega_r^{-1})^s \mathbf{u}_s$. Let $W_{s,0}$ be the subspace of $\Phi_{n,0}$ spanned by $\mathbf{u}_s$ and $\mathbf{v}_s$.  The above identities show that each $W_{s,0}$ is a subrepresentation of $\Phi_{n,0}$: if $\mathbf{u}_s$ and $\mathbf{v}_s$ are linearly dependent, then $\mathbf{u}_s$ is an eigenvector for $R_1$ and $R_2$, hence $W_s$ is a 1-dimensional subrepresentation.  And if $\mathbf{u}_s$ and $\mathbf{v}_s$ are linearly independent, the matrix representations of $R_1$ and $R_2$ with respect to the basis $\left\{ \mathbf{u}_s, \mathbf{v}_s \right\}$ are
        \[  \left[ R_1 \right]_{\left\{ \mathbf{u}_s, \mathbf{v}_s \right\} }= \begin{bmatrix}
        0 & 1 \\ 
        1 & 0 
        \end{bmatrix}, \; \; \; \; \left[ R_2 \right]_{\left\{ \mathbf{u}_s, \mathbf{v}_s \right\} } = \begin{bmatrix}
        0 & \omega_r^s \\
        \omega_r^{-s} & 0 
        \end{bmatrix},\]
        which gives us $W_{s,0} \cong W(\omega_r^s)$ (as group representations of $D_r$).  From the identity $(\omega_r^{r-s})^\ell = (\omega_r^s)^{-\ell}$ (for any integers $\ell, s$),  we obtain $\mathbf{u}_s = - \mathbf{v}_{r-s}$, for $1 \leq s \leq r-1$.  Therefore $W_{s,0} = W_{k-s, 0}$, $1 \leq s \leq r-1$.  
        
        Next we will show that the set $\left\{ \mathbf{u}_s \right\}_{1 \leq s \leq r-1}$ is a basis for $\Phi_{n,0}$. Let 
        \[ A = \begin{bmatrix}
          - \mathbf{u}_1 - \\
            - \mathbf{u}_2 - \\
            \vdots \\
               - \mathbf{u}_{k-1} - \\
        \end{bmatrix} = \begin{bmatrix}
        1 - \omega_r & 1 - \omega_r^2  & \cdots & 1 - \omega_r^{r-1} \\
        1 - \omega_r^2 & 1 - (\omega_r^2)^2  & \cdots & 1 - (\omega_r^2)^{r-1} \\
        \vdots & \vdots & & \vdots \\
        1 - \omega_r^{r-1} & 1 - (\omega_r^{r-1})^2 & \cdots & 1 - (\omega_r^{r-1})^{r-1}.
        \end{bmatrix}
        \] 
        Then we can write 
        \[ A = (-1)^s \begin{bmatrix}
        2 & 1  & \cdots & 1 & 1 \\
        1 & 2  & \cdots &  1 & 1 \\
        \vdots &  \vdots & & \vdots & \vdots\\
        1 & 1  & \cdots & 2 & 1 \\
        1 & 1  & \cdots & 1 & 2 
        \end{bmatrix}  \begin{bmatrix}
         \omega &  \omega_r^2  & \cdots &  \omega_r^{r-1} \\
         \omega_r^2 & (\omega_r^2)^2  & \cdots &  (\omega_r^{r-1})^{2} \\
        \vdots & \vdots & & \vdots \\
         \omega_r^{r-1} &  (\omega_r^{2})^{r-1} & \cdots &  (\omega_r^{r-1})^{r-1}.
        \end{bmatrix},\]
        where each matrix is $r-1$ square.  The first matrix in this factorization is easily row-reduced to the identity, while the second matrix is the submatrix obtained from the Vandermonde matrix of the $r$ roots of unity by removing the first row and column of 1's.  Since this Vandermonde matrix is invertible, it follows that $A$ is invertible, hence the set  $\left\{ \mathbf{u}_s \right\}_{1 \leq s \leq r-1}$ is linearly independent.  As a corollary, we obtain  the following: 
        \noindent For $r$  even,  $W_{\ell,0} \cap W_{s,0} = \left\{ 0 \right\}$ for $1 \leq \ell \neq s \leq r/2$,
        and $\displaystyle{\dim(W_{s,0}) = \begin{cases}
        2, & 1 \leq s < r/2\\
        1, & s = r/2
        \end{cases}. }$ 
        For $r$ odd, $W_{\ell,0} \cap W_{s,0} = \left\{ 0 \right\}$ for $1 \leq \ell \neq s \leq (r-1)/2$, and $\dim(W_{s,0}) = 2$ for all~$s$.
        \\
        \\ 
        Hence we obtain a decomposition of $\Phi_{n,0}$ into irreducible subrepresentations: 
        \[ \Phi_{n,0} \cong \begin{cases}
        W_{r/2,0} \displaystyle{\bigoplus_{s=1}^{r/2-1} W(\omega_r^s)}, & r \text{ even}\\
         & \\
        \displaystyle{\bigoplus_{s=1}^{(r-1)/2} W(\omega_r^s)}, & r \text{ odd}\\
        \end{cases}\]
        The decomposition of $\Phi_{n,1}$ is identical - the only change being that $R_1$, $R_2$ are replaced by $R_{n/2}$, $R_1$.  The matrix representations of these transformations, with respect to the basis $\Delta_{n,1}$, are given by 
        
        \begin{equation}\label{matrixrep2}
         \left[ R_{n/2} \right]_{\Delta_{n, 1}} = \begin{bmatrix}
        0 & 0 & \cdots & 0 &-1 \\
        0 & 0 & \cdots & -1 & 0 \\
        \vdots & \vdots & \reflectbox{$\ddots$}  & \vdots & \vdots \\
        0 & -1 & \cdots & 0 & 0 \\
        -1 & 0 & \cdots & 0 & 0
        \end{bmatrix}
        , \; \; \; \;  \left[ R_1 \right]_{\Delta_{n, 1}} = \begin{bmatrix}
        1 & 0 & \cdots & 0 & 0 \\
        1 & 0 & \cdots &  0 & -1 \\
        \vdots & \vdots & \reflectbox{$\ddots$}  & \vdots & \vdots \\
        1 & 0 & \cdots & 0 & 0 \\
        1 & -1 & \cdots & 0 & 0
        \end{bmatrix}.
        \end{equation}
        The remainder of the argument is the same, \textit{mutatis mutandis}, and shows 
        
        \[ \Phi_{n,1} \cong \begin{cases}
        W_{r/2,1} \displaystyle{\bigoplus_{s=1}^{r/2-1} W(\omega_r^s)}, & r \text{ even}\\
         & \\
        \displaystyle{\bigoplus_{s=1}^{(r-1)/2} W(\omega_r^s)}, & r \text{ odd}\\
        \end{cases}\]

        For $r$ even, the one-dimensional subrepresentations of $\C\mathbb{Z}_{2k}$ are:
        \[ \C \mathbf{1}, \; \; \; \; \C\left( \sum_{i=0}^n (-1)^{i}e_i \right), \; \; \; W_{r/2,0}, \; \; \; \text{ and } \; \; \; W_{r/2,1}, \]
        and the two-dimensional irreducible representations are $W_{s,0}$ and $W_{s,1}$,  $1 \leq s \leq r/2-1$.
        
        For $r$ odd,  the one-dimensional subrepresentations of $\C\mathbb{Z}_{2r}$ are:
        \[ \C \mathbf{1} \; \;  \text{ and }\; \; \C\left( \sum_{r=0}^n (-1)^{i}e_i \right) ,\]
        and the two-dimensional irreducible representations are $W_{s,0}$ and $W_{s,1}$,  $1 \leq s \leq (r~-~1)~/~2$.
        
        Next assume $n$ is odd: $n = 2r+1$.  In this case, $\inn(\Z_n) \cong D_n$.  With respect to the basis $\Delta_n = \left\{ v_{12}, v_{23}, \ldots, v_{n-1, n} \right\}$, the elements $R_1$, $R_2 \in \inn(Z_n)$ have matrix representation 
        \begin{equation*}
         \left[ R_1 \right]_{\Delta_n} = \begin{bmatrix}
        0  & \cdots & 0 & -1 & 1 \\
        0  & \cdots  & -1 & 0  & 1 \\
        \vdots  & \reflectbox{$\ddots$}  & \vdots & \vdots & \vdots \\
        -1 & 0 & \cdots & 0 & 1 \\
        0 & 0 & \cdots & 0 & 1
        \end{bmatrix}
        , \; \; \; \;  \left[ R_2 \right]_{\Delta_n} = \begin{bmatrix}
        1 & 0 & \cdots & 0 & 0 \\
        1 & 0 & \cdots &  0 & -1 \\
        \vdots & \vdots & \reflectbox{$\ddots$}  & \vdots & \vdots \\
        1 & 0 & \cdots & 0 & 0 \\
        1 & -1 & \cdots & 0 & 0
        \end{bmatrix}.
        \end{equation*}
         Let  $\omega = e^{2 \pi i / n}$, and  for $1 \leq s \leq n-1$, let 
        \[ \mathbf{u}_s  =  \left[ 1-\omega^s, 1 - (\omega^s)^2, \cdots, 1 - (\omega^s)^{n-1} \right], \; \; \; \text{ and } \; \; \mathbf{v}_s = R_1(\mathbf{u}_s).\]
        We can then show that 
        \[ \mathbf{v}_s = \omega^{-s}\left( \omega^{-s} - 1, (\omega^{-s})^2 - 1, \ldots, (\omega^{-s})^{n-1} - 1\right).\]
        Let $W_{s}$ be the subspace of $\Phi_{n}$ spanned by $\mathbf{u}_s$ and $\mathbf{v}_s$.   We can show that $\mathbf{u}_s = \mathbf{v}_{n-s}$, hence $W_s = W_{n-s}$.  Furthermore, we have 
        \begin{eqnarray*}
        R_2(\mathbf{u}_s) &=& R_2( 1 - \omega^s, 1 - (\omega^s)^2, \ldots, 1 - (\omega^s)^{n-1})\\
        &=& ( 1 - \omega^s, (\omega^s)^{n-1} - \omega^s, (\omega^s)^{n-2} - \omega^s, \ldots, (\omega^s)^{2} - \omega^s) \\
         &=& \omega^{2s} \omega^{-s} ( \omega^{-s} -1, (\omega^{-s})^2 -1, \ldots, (\omega^{-s})^{n-1} - 1)\\
         &=& \omega^{2s} \mathbf{v}_s.
        \end{eqnarray*}
        Therefore, if $\mathbf{u}_s$ and $\mathbf{v}_s$ are linearly independent, the matrix representations of $R_1$ and $R_2$ with respect to the basis $\left\{ \mathbf{u}_s, \mathbf{v}_s \right\}$ of $W_s$ are
        \[  \left[ R_1 \right]_{\left\{ \mathbf{u}_s, \mathbf{v}_s \right\} }= \begin{bmatrix}
        0 & 1 \\ 
        1 & 0 
        \end{bmatrix}, \; \; \; \; \left[ R_2 \right]_{\left\{ \mathbf{u}_s, \mathbf{v}_s \right\} } = \begin{bmatrix}
        0 & \omega^{2s} \\
        \omega^{-2s} & 0 
        \end{bmatrix},\]
        which gives us $W_{s} \cong W(\omega^{2s})$ (as group representations of $D_n$).  We can show linear independence of $\mathbf{u}_s$ and $\mathbf{v}_s$ for $1 \leq s \leq r$, which gives us the desired decomposition into irreducible representations 
        \[ \C\mathbb{Z}_n  \cong \C (1,1) \bigoplus_{s = 1}^{r} W(\omega_n^{2s}). \]
        \end{proof}
        \begin{ex} We list the irreducible subrepresentations of $\C \Z_{10}$, $\C \Z_{11}$, and $\C \Z_{12}$:
        \begin{center}
        \begin{tabular}{clllcl}
        subrep of $\Z_{10}$ & & irrep of $D_5$ & generated by   & dimension\\[4pt]
        \hline\\[-4pt]
         $\C \mathbf{1}$ & $\cong$ &  $\C(1,1) $ & $\hspace{.5in} \mathbf{1}$ & 1\\[4pt]
        $\C \hat{\mathbf{1}} $ & $ \cong $ &$\C(1,1)$ &  \hspace{.45in} $\hat{\mathbf{1}}$ & 1 \\[4pt]
        $W_{1,0}$ & $ \cong $ & $W(\omega_5) $ & $\sum_{i=1}^4 \left( 1 - \omega_5^i
        \right)v_{2i, 2i+2}$ \hspace{.37in} &  2 \\[4pt]
        $W_{2,0}$ & $ \cong $ & $W(\omega_5^2 ) $ & $\sum_{i=1}^4 \left( 1 - \omega_5^{2i}\right)v_{2i, 2i+2}$ & 2 \\[4pt]
        $W_{1,1}$ & $ \cong $ & $W(\omega_5) $& $\sum_{i=1}^4 \left( 1 - \omega_5^{i}\right)v_{2i-1, 2i+1}$   & 2\\[4pt]
        $W_{2,1}$ & $ \cong $ & $W(\omega_5^2 ) $& $\sum_{i=1}^4 \left( 1 - \omega_5^{2i}\right)v_{2i-1, 2i+1}$ & 2 \\[4pt]
        \end{tabular}
        \end{center}
        
        \begin{center}
        \begin{tabular}{clllcl}
        subrep of $\Z_{11}$ & & irrep of $D_{11}$ & generated by   & dimension\\[4pt]
        \hline\\[-4pt]
        $\C \mathbf{1}$ & $\cong$ &  $\C(1,1) $ &  \hspace{.5in} $ \mathbf{1}$ & 1\\[4pt]
        $W_{1}$ & $ \cong $ & $W(\omega_{11}^2) $&  $ \sum_{i=1}^8 \left( 1 - \omega_{11}^{2i}\right)e_i$ \hspace{.58in} & 2 \\[4pt]
        $W_{2}$ & $ \cong $ & $W(\omega_{11}^4) $&  $\sum_{i=1}^8 \left( 1 - \omega_{11}^{4i}\right)e_i$& 2 \\[4pt]
        $W_{3}$ & $ \cong $ & $W(\omega_{11}^6) $&  $\sum_{i=1}^8 \left( 1 - \omega_{11}^{6i}\right)e_i$& 2 \\[4pt]
        $W_{4}$ & $ \cong $ & $W(\omega_{11}^8) $&  $\sum_{i=1}^8 \left( 1 - \omega_{11}^{8i}\right)e_i$& 2 \\[4pt]
        $W_{5}$ & $ \cong $ & $W(\omega_{11}^{10}) $&  $\sum_{i=1}^8 \left( 1 - \omega_{11}^{10i}\right)e_i$& 2 \\[4pt]
        \end{tabular}
        \end{center}
        
        \begin{center}
        \begin{tabular}{clllcl}
        subrep of $\Z_{12}$ & & irrep of $D_6$ & generated by   & dimension\\[4pt]
        \hline\\[-4pt]
        $\C \mathbf{1}$ & $\cong$ &  $\C(1,1) $ & $\hspace{.5in} \mathbf{1}$ & 1\\[4pt]
        $\C \hat{\mathbf{1}} $ & $ \cong $ &$\C(1,1)$ &  \hspace{.45in} $\hat{\mathbf{1}}$ & 1 \\[4pt]
        $W_{1,0}$ & $ \cong $ & $W(\omega_6) $ & $\sum_{i=1}^5 \left( 1 - \omega_6^i
        \right)v_{2i, 2i+2}$ &  2 \\[4pt]
        $W_{2,0}$ & $ \cong $ & $W(\omega_6^2 ) $ & $\sum_{i=1}^5 \left( 1 - \omega_6^{2i}\right)v_{2i, 2i+2}$ & 2 \\[4pt]
        $W_{3,0}$ & $ \cong $ & $\C(-1,1)$ & $\sum_{i=1}^5 \left( 1 - 
        (-1)^{i}\right)v_{2i, 2i+2}$   & 1 \\[4pt]
        $W_{1,1}$ & $ \cong $ & $W(\omega_8) $& $\sum_{i=1}^5 \left( 1 - \omega_6^i\right)v_{2i-1, 2i+1}$   & 2\\[4pt]
        $W_{2,1}$ & $ \cong $ & $W(\omega_6^2 ) $& $\sum_{i=1}^5 \left( 1 - \omega_6^{2i}\right)v_{2i-1, 2i+1}$ & 2 \\[4pt]
        $W_{3,1}$ & $ \cong $ & $\C(1,-1) $& $\sum_{i=1}^5 \left( 1 - (-1)^i \right)v_{2i-1, 2i+1}$  & 1\\[4pt]
        \end{tabular}
        \end{center}

        \end{ex}

\section{Reducible and irreducible representations of cyclic quandles}\label{sec:irreps}

We consider in this section representations of cyclic quandles of arbitrary cardinality. We start by studying $2$-dimensional  representations in detail. 

\begin{thm}\label{thm:cyclic_irrep}
    Let $\phi : X \longrightarrow {\rm Conj}({\rm Aut}\C^2)$ be a non-constant representation of a cyclic quandle $X$ of order $n\geq 3$. Then $\phi$ is irreducible, or $A^{q-1}$ and $B^{q-1}$ are multiples of the identity. 
\end{thm}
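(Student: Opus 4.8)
The plan is to pass to the presentation $X\cong F_{q,\alpha}$ of Theorem~\ref{cyclic_classification} (with $q=n$ and $\alpha$ a primitive element), so that $\phi$ is completely determined by the two matrices $A:=\phi_x$ and $B:=\phi_y$, subject to the images under $\phi$ of the defining relations of $F_{q,\alpha}$. In fact I will only need two of them: from $xy^{q-1}=x$ one gets $B^{q-1}AB^{-(q-1)}=A$, i.e.\ $A$ commutes with $B^{q-1}$; and from $xy=yx^{m_1}$ with $m_1=\log_{\alpha}(1-\alpha)$ one gets $BAB^{-1}=A^{m_1}BA^{-m_1}$. I would also record the elementary fact that $\phi$ is non-constant if and only if $A\neq B$: by Lemma~\ref{lem:generators} every element of $X$ is left-associated in $\{x,y\}$, so each $\phi_t$ is a word in $A^{\pm1},B^{\pm1}$, and $A=B$ then forces all $\phi_t$ equal.

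Assuming $\phi$ is not irreducible, a proper nonzero subrepresentation of $\C^2$ is a line fixed by every $\phi_t$, hence in particular a common eigenline of $A$ and $B$; conversely, since every $\phi_t$ is a word in $A^{\pm1},B^{\pm1}$, any common eigenline of $A$ and $B$ is a subrepresentation. So reducibility of $\phi$ is exactly the existence of a common eigenvector of $A$ and $B$. Choosing a basis whose first vector is such a common eigenvector makes $A$ and $B$ simultaneously upper triangular. Comparing $(1,1)$-entries, and then $(2,2)$-entries, on the two sides of $BAB^{-1}=A^{m_1}BA^{-m_1}$ (the diagonal of an upper triangular product is the entrywise product of the diagonals, and the diagonal of an inverse is the entrywise reciprocal) forces $A$ and $B$ to have the same diagonal, say $(c_1,c_2)$; since $A\neq B$, their strictly-upper entries then differ.

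I would then split on whether $c_1=c_2$. If $c_1=c_2$, then $A$ and $B$ each differ from a scalar matrix by a strictly upper triangular matrix, and the product of two strictly upper triangular $2\times2$ matrices is zero; hence $A$ and $B$ commute, so $BAB^{-1}=A$ while $A^{m_1}BA^{-m_1}=B$, and the relation gives $A=B$, contradicting non-constancy. Hence $c_1\neq c_2$, so $A$ and $B$ are each diagonalizable with eigenvalues $\{c_1,c_2\}$. If $c_1^{q-1}=c_2^{q-1}$, then $A^{q-1}$ and $B^{q-1}$ are diagonalizable with the single eigenvalue $c_1^{q-1}$, hence equal $c_1^{q-1}I$, which is exactly the conclusion we want. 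To finish I must exclude the remaining case $c_1^{q-1}\neq c_2^{q-1}$: there $B^{q-1}$ is again diagonalizable with two distinct eigenvalues and with the same eigenlines as $B$, and since $A$ commutes with $B^{q-1}$ it must preserve each of those eigenlines; thus $A$ and $B$ are simultaneously diagonalizable, hence commute, and once more $BAB^{-1}=A^{m_1}BA^{-m_1}$ degenerates to $A=B$, a contradiction.

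I expect the one genuinely substantive point to be this last case, the ``mixed'' regime $c_1\neq c_2$ but $c_1^{q-1}\neq c_2^{q-1}$; the mechanism I would rely on is that commuting with the matrix $B^{q-1}$, which has two distinct eigenvalues, pins $A$ into an eigenbasis of $B$, after which the single relation coming from $xy=yx^{m_1}$ collapses to $A=B$. Everything else is bookkeeping with $2\times2$ upper triangular matrices, together with the remark that each $(i,i)$-entry of a triangularized quandle representation is itself a one-dimensional quandle representation.
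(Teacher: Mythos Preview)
Your argument is correct and noticeably cleaner than the paper's. Both proofs start from the same premise---reducibility of a $2$-dimensional representation is exactly the existence of a common eigenvector of $A=\phi_x$ and $B=\phi_y$---but diverge sharply after that. The paper invokes Shemesh's criterion to translate ``common eigenvector'' into $\ker[A,B]\neq 0$, writes $B=CAC^{-1}$ using connectedness of $X$, puts $A$ in Jordan form, and then carries out explicit entrywise computations of $[A,B]$ and of $A^{q-1}CAC^{-1}A^{-(q-1)}$ in two Jordan cases, eventually forcing either $A=B$ or $\lambda^{q-1}=\mu^{q-1}$. You instead pass immediately to a simultaneous upper-triangular basis; the observation that the diagonals of $BAB^{-1}$ and $A^{m_1}BA^{-m_1}$ are respectively those of $A$ and of $B$ instantly gives that $A$ and $B$ share a diagonal $(c_1,c_2)$, replacing the paper's commutator computations. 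Your case split ($c_1=c_2$ versus $c_1\neq c_2$, and in the latter $c_1^{q-1}=c_2^{q-1}$ or not) is then dispatched by the elementary facts that scalar-plus-nilpotent $2\times 2$ matrices commute, and that commuting with a diagonalizable matrix with distinct eigenvalues forces simultaneous diagonalizability. The paper's route has the virtue of being entirely explicit, but your argument avoids both Shemesh's criterion and the conjugating matrix $C$, and uses only the two relations $xy^{q-1}=x$ and $xy=yx^{m_1}$ rather than the full family.
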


   \begin{proof}

    Let us assume, by way of contradiction, that $\phi$ is reducible, and that it is non-constant such that $A^{q-1}$ is not a multiple of the identity. Let $A$ and $B$ be different $2\times 2$ matrices with $\phi(x) = A$ and $\phi(y) = B$, and let $A$ be in Jordan canonical form. Since the quandle $X$ is connected, $B = CAC^{-1}$ for some invertible matrix $C$. We distinguish two cases, depending on the Jordan canonical form of $A$ consisting of a single block, or two distinct blocks (i.e. $A$ diagonalizable). Suppose first that $A = \begin{bmatrix} \lambda & 1\\ 0 & \lambda \end{bmatrix}$. Further, we set $C = \begin{bmatrix} a & b\\ c & d \end{bmatrix}$, where $det(C) \neq 0$. Since $\phi$ is reducible, $A$ and $B$ have common eigenvectors. Using Shemesh's Criterion (\cite{Shemesh}), this is equivalent to saying that ${\rm ker} [A,B] \neq 0$. Using $B=CAC^{-1}$, for the commutator of $A$ and $B$ we find 
    $$
    [A,B] = det(C)\begin{bmatrix} c(a\lambda -c + d\lambda) & -a\lambda + ac - cd + d^2\lambda\\ 0 & -c(a\lambda -c + d\lambda) \end{bmatrix}.
    $$
    Since ${\rm ker}([A,B]) \neq 0$, we have that $c(a\lambda -c + d\lambda) = 0$. From this condition, we obtain two subcases, namely when $c=0$ and when $-a\lambda + c - d\lambda = 0$. Let us assume that $c=0$ first. We argue that it must also hold $-a\lambda + ac - cd + d^2\lambda = 0$. In fact, suppose this is not the case. Then we can rewrite (up to some normalizing constant) $C=\begin{bmatrix}
        t&\mu\\
        0 & 1
    \end{bmatrix}$ for some $t$ and $\mu$. Then we obtain $B = \begin{bmatrix}
        \lambda & t\\
        0 & \lambda
    \end{bmatrix}$, which implies $[A,B] = 0$, against the fact that we assumed that one of the entries of $[A,B]$ was nonzero. 
    In the case $c=0$, then, from $-a\lambda + ac - cd + d^2\lambda = 0$ we obtain $d = \pm a$. Therefore, up to rescaling $C$ by a constant (which does not affect the conjugation of $A$ by $C$) we can assume $C = \begin{bmatrix} 1 & \mu\\ 0 & 1 \end{bmatrix}$. Therefore it follows that $CAC^{-1} = A$, which implies $A=B$ against our initial assumptions. Let us now consider $-a\lambda + c - d\lambda = 0$. In this case we have $c= a\lambda + d\lambda$, from which we have $C = \begin{bmatrix} a & b\\ (a+d)\lambda & d \end{bmatrix}$. Then, we have 
    $$
    CAC^{-1} = \begin{bmatrix}
    \lambda[ad - (a+d)(a+b\lambda)] & a^2\\
    -\lambda^2(a+d)^2 & \lambda[a(a+2d) - b\lambda(a+d)]
    \end{bmatrix}.
    $$
    From the relation $A^{q-1}BA^{-q+1} = B$ we obtain, using the fact that $B = CAC^{-1}$, the equality $A^{q-1}CAC^{-1}A^{-q+1} = CAC^{-1}$. 
    Moreover, using $A = \begin{bmatrix}
    1& 1/\lambda\\
    0& 1
    \end{bmatrix}$
     we find that the LHS of this equation gives (through an induction argument in $q$)
     \begin{eqnarray*}
       \lefteqn{A^{q-1}CAC^{-1}A^{-q+1}}  \\
       &=& \tiny{ \begin{bmatrix}
        ad-(q-1)(a+d)^2-(a+d)(a+b\lambda) & a^2/\lambda + \frac{(q-1)[(a(a+d)-b\lambda(a+b)]}{\lambda}\\
        -\lambda(a+d)^2 & a(a+(q-1)d)-b\lambda(a+d) + (q-1)(a+d)^2
     \end{bmatrix}},  
     \end{eqnarray*}
     while for the RHS one has
     $$
     CAC^{-1} = \small{\begin{bmatrix}
     ad-(a+d)(a+b\lambda) & a^2/\lambda \\
     -\lambda(a+d)^2 & a(a+(q-1)d)-b\lambda(a+d) + (q-1)(a+d)^2
     \end{bmatrix}}.
     $$
     Comparing the top-left entries of the two matrices we obtain $(a+d)^2 = 0$, which in turn implies $a=-d$. But since we started from the assumption that $c = \lambda(a+d)$, we find that $c=0$, and we can apply the same argument as for the previous case to show that $A=B$, against the assumption that the representation $\phi$ is not constant. This contradiction completes the case where $A$ consists of a single Jordan block (i.e. it is not diagonalizable). 

     Let us now suppose that $A = \begin{bmatrix}
         \lambda & 0\\
         0 & \mu
     \end{bmatrix}$, where clearly $\mu\neq\lambda$, or we would immediately have a contradiction. As before, using $B = CAC^{-1}$ for some generic  invertible $C$, we obtain for the commutator
     $$
     [A,B] = \begin{bmatrix}
         0 & b(a\mu(\lambda-\mu) + d\lambda^2 -d\mu\lambda)\\
         c(-a\lambda\mu + a\mu^2 + d\lambda(\lambda-\mu)) & 0
     \end{bmatrix}.
     $$
     Once again, using the hypothesis of reducibility of $\phi$ and Shemesh's Criterion, it follows that ${\rm ker}[A,B]\neq 0$. As a consequence, this forces $b(a\mu(\lambda-\mu) + d\lambda^2 -d\mu\lambda) = 0$, or  $c(-a\lambda\mu + a\mu^2 + d\lambda(\lambda-\mu)) = 0$. If either $b=0$ or $c=0$, then we can proceed similarly to the previoius case to see that we obtain a contradiction. We can then assume that $a\mu(\lambda-\mu) + d\lambda^2 -d\mu\lambda=0$ or $-a\lambda\mu + a\mu^2 + d\lambda(\lambda-\mu)=0$. Either way we obtain $d\lambda = a\mu$.  Therefore, we can write $C = \begin{bmatrix}
         a&b\\
         c&a\mu/\lambda
     \end{bmatrix}$.
     From the relation $CAC^{-1} = A^{q-1}CAC^{-1}A^{-q+1}$, equating the terms corresponding to entries $1,2$ in the matrices, we find that $\frac{ab\lambda(\lambda-\mu)(\lambda^{q-1}-\mu^{q-1})}{\mu^{q-1}(a^2\mu-bc\lambda)} = 0$. This implies (since we had $b\neq0$ and $\lambda\neq \mu$) that $a=0$ (and hence $d=0$ as well), or $\lambda^{q-1}-\mu^{q-1}=0$. If $a=0$, then the matrix $C$ simplifies to an anti-diagonal matrix, and a direct computation using the relations again shows that this is not possible. It must therefore hold that $\lambda^{q-1}-\mu^{q-1}=0$, from which it follows that $A^{q-1}$ is a multiple of the identity.
     The representation $\phi$ must therefore be irreducible. 
   \end{proof}

We now consider $2$-dimensional irreducible representations of cyclic quandles of arbitrary order $n\geq 3$.

\begin{pro}\label{pro:no_irrep}
    Let $X$ be a cyclic quandle of order $n\geq 3$, and let $\phi : X \longrightarrow {\rm Conj}({\rm Aut}\C^2)$ be a representation of $X$ that does not map the generators to mutliples of $(q-1)$-roots of the identity. Then $\phi$ is reducible. 
\end{pro}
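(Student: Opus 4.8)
I would prove the contrapositive: if $\phi$ is irreducible, then it maps the generators of $X$ to scalar multiples of $(q-1)$-roots of the identity. Write $A=\phi(x)$ and $B=\phi(y)$, where $x,y$ are the two generators of $X$ coming from the presentation $F_{q,\alpha}$ (recall, via Theorem~\ref{cyclic_classification}, that a cyclic quandle of order $n$ is an Alexander quandle $(\mathbb{F}_q,\alpha)$ with $n=q$ a prime power and $\alpha$ primitive, hence is generated by two elements). The only structural fact I need is that, $X$ being of cyclic type, each $R_i$ is a $(q-1)$-cycle, so $R_x^{q-1}=\mathrm{id}$; in quandle notation this is the relation $yx^{q-1}=y$, and symmetrically $xy^{q-1}=x$ (cf.\ Lemma~\ref{log_relations}(1)).

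\textbf{Main argument.} Next I would translate this relation into matrix form. Since $\phi$ is a quandle homomorphism into $\mathrm{Conj}(GL(\C^2))$ we have $\phi(a\rRack b)=\phi(b)\phi(a)\phi(b)^{-1}$, and hence $\phi(yx^k)=A^kBA^{-k}$ by a one-line induction on $k$. The relation $yx^{q-1}=y$ then gives $A^{q-1}BA^{-(q-1)}=B$, i.e.\ $A^{q-1}$ commutes with $B$; of course it also commutes with $A$. Since $X=\langle x,y\rangle$, every $\phi(z)$ ($z\in X$) is a word in $A^{\pm1}$ and $B^{\pm1}$, so $A^{q-1}$ commutes with $\phi(z)$ for all $z\in X$; that is, $A^{q-1}$ is an endomorphism of the representation $\phi$. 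If $\phi$ is irreducible, Schur's Lemma (over the algebraically closed field $\C$) forces $A^{q-1}=cI$ for some scalar $c$, and $c\neq0$ because $A$ is invertible (equivalently: by Burnside's theorem the algebra generated by $A$ and $B$ is all of $M_2(\C)$, whose centre is the scalars). Choosing a $(q-1)$-st root $\zeta$ of $c$, we get $(A/\zeta)^{q-1}=I$, so $A=\zeta\,(A/\zeta)$ is a multiple of a $(q-1)$-root of the identity; applying the same reasoning to $xy^{q-1}=x$ shows $B$ is one too. This proves the contrapositive, and hence the Proposition.

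\textbf{Expected obstacle.} I do not anticipate a genuine difficulty. The point is that the elaborate case analysis of Theorem~\ref{thm:cyclic_irrep} (reducing $A$ to Jordan form and branching on the entries of the conjugating matrix $C$) collapses to the single observation that the endomorphism ring of an irreducible $2$-dimensional representation consists of scalars only, while $A^{q-1}$ is always such an endomorphism by the cyclic-type relation $R_x^{q-1}=\mathrm{id}$. The only points requiring (routine) verification are that every cyclic quandle of order $q$ is generated by two elements and satisfies $R_x^{q-1}=\mathrm{id}$ — both immediate from the presentation $F_{q,\alpha}$ — and that the conditions ``$A^{q-1}$ is a scalar matrix'' and ``$A$ is a multiple of a $(q-1)$-root of the identity'' are equivalent, so that the hypothesis of the Proposition matches the conclusion of the contrapositive.
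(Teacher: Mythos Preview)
Your proof is correct and follows essentially the same idea as the paper's: both show that the relation $yx^{q-1}=y$ forces $A^{q-1}$ to commute with $B$ (and $A$), and then use irreducibility to conclude $A^{q-1}$ is scalar. The only difference is packaging: you invoke Schur's Lemma by name, while the paper carries out the specific instance by hand---picking an eigenvector $e$ of $A$, observing that $\{e,Be\}$ is a basis (else $\C e$ would be invariant), and noting both lie in the $\lambda^{q-1}$-eigenspace of $A^{q-1}$.
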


\begin{proof}
    Let us assume that $\phi$ is irreducible. We denote by $\phi(x) = A$ and $\phi(y)=B$ the images of the generators of $X$. Let $e$ denote an eigenvector of $A$ with eigenvalue $\lambda$. Then, from one of the defining relations, we have that $A^{q-1}BA^{-q+1}e=Be$, which implies
    $$
    \lambda^{-q+1}A^{q-1}Be = Be, 
    $$
    which means that $Be$ is an eigenvector of $A^{q-1}$ with eigenvalue $\lambda^{q-1}$. Also, $e$ is an eigenvector of $A^{q-1}$ with eigenvalue $\lambda^{q-1}$ because $e$ is an eigenvector of $A$ with eigenvalue $\lambda$. Since $\phi$ is irreducible, then $e$ and $Be$ are not proportional, and therefore $\{e,Be\}$ is an eigenbasis for $A^{q-1}$ with same eigenvalue. In other words, it follows that $A^{q-1}$ is the multiplication by the constant $\lambda^{q-1}$, which is not possible by hypothesis. Therefore $\phi$ needs to be reducible. 
\end{proof}

\begin{cor}
    Let $X$ be a cyclic quandle of order $n\geq 3$ and let $\phi$ be a representation of $X$. Then $\phi$ is a constant map $X\longrightarrow {\rm Conj}({\rm Aut}\C^2)$, or $\phi$ maps the generators (up to a constant) to $(q-1)^{\rm th}$-roots of the identity.
\end{cor}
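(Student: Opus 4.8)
The plan is to obtain the corollary as a purely formal consequence of Theorem~\ref{thm:cyclic_irrep} and Proposition~\ref{pro:no_irrep}, with no further computation. Let $\phi : X \longrightarrow {\rm Conj}({\rm Aut}\,\C^2)$ be a representation of a cyclic quandle $X$ of order $n = q \ge 3$; by Theorem~\ref{cyclic_classification} we may regard $X$ as generated by two elements, whose images I denote $A = \phi(x)$ and $B = \phi(y)$. If $\phi$ is constant, we are in the first alternative of the statement and there is nothing to prove, so from now on assume $\phi$ is non-constant.

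First I would apply Theorem~\ref{thm:cyclic_irrep}: either $\phi$ is irreducible, or both $A^{q-1}$ and $B^{q-1}$ are multiples of the identity. Consider the latter case, say $A^{q-1} = c\,\Id$ and $B^{q-1} = c'\,\Id$ with $c, c' \in \C^{\times}$. Since every nonzero complex number has a $(q-1)$-th root, choose $\zeta, \zeta'$ with $\zeta^{q-1} = c$ and $(\zeta')^{q-1} = c'$; then $(\zeta^{-1}A)^{q-1} = \Id$ and $((\zeta')^{-1}B)^{q-1} = \Id$, so $\phi$ sends the generators, up to a constant, to $(q-1)$-th roots of the identity, which is exactly the second alternative of the statement.

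It remains to handle the case where $\phi$ is irreducible, and here I would invoke the contrapositive of Proposition~\ref{pro:no_irrep}. For a $2$-dimensional representation, ``reducible'' is the negation of ``irreducible'' (a proper nontrivial subrepresentation is necessarily a line), so Proposition~\ref{pro:no_irrep} says precisely that any representation not sending the generators (up to a constant) to $(q-1)$-th roots of the identity is \emph{not} irreducible; hence an irreducible $\phi$ does send the generators to such matrices. Combining this with the previous paragraph covers both alternatives produced by Theorem~\ref{thm:cyclic_irrep}, completing the argument.

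I do not anticipate a genuine obstacle: the corollary is a bookkeeping combination of the two preceding results. The only points deserving a word of care are (i) translating ``multiples of the identity'' into ``$(q-1)$-th roots of the identity up to a constant'' using the solvability of $\zeta^{q-1} = c$ in $\C^{\times}$, and (ii) the observation that in dimension $2$ the notions ``reducible'' and ``not irreducible'' coincide, which is what licenses the use of the contrapositive of Proposition~\ref{pro:no_irrep}. All the substantive content has already been established in the proofs of Theorem~\ref{thm:cyclic_irrep} and Proposition~\ref{pro:no_irrep}.
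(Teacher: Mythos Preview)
Your proposal is correct and follows essentially the same approach as the paper: both combine Theorem~\ref{thm:cyclic_irrep} and Proposition~\ref{pro:no_irrep} with no additional input. The paper packages the argument as a two-line contradiction (assume $\phi$ is neither constant nor sends the generators to such roots, then conclude it is simultaneously irreducible and reducible), whereas you spell out the case split and the contrapositive explicitly; the logical content is identical.
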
    
     \begin{proof}

        If $\phi$ is not constant and it does not map the generators to roots of the identity, then it is irreducible by Theorem~\ref{thm:cyclic_irrep}. But this is not possible by Proposition~\ref{pro:no_irrep}. 
    
     \end{proof}
For any cyclic quandle $F(q, \alpha)$, presently it is unknown by the authors whether non--constant representations $\phi: F(q, \alpha) \rightarrow \text{Conj}(\Aut(V))$ exist.  However, there is a condition on the minimal polynomial of $\phi(0)^{q-1}$ which guarantees this representation to be constant. We describe this result in the remainder of this section.

We will denote by $J(\lambda, s)$ the $s\times s$ Jordan block matrix 
\[ J(\lambda, s) =  \begin{bmatrix} 
                   \lambda & 1  & \cdots  & 0 \\
                     0 & \lambda  & \ddots & 0 \\
                     \vdots &  \ddots & \ddots & 1\\
                     0 & 0 &   \cdots & \lambda 
                     \end{bmatrix}. \]
                     
Let $A$ be an $n \times n$ matrix in Jordan Normal form, with $r$ Jordan blocks and corresponding eigenvalues $\lambda_1, \ldots, \lambda_r$.  We will write $A = J(\lambda_1, s_1) \oplus J(\lambda_2, s_2) \oplus \cdots \oplus J(\lambda_r, s_r)$ (where the $i^{th}$ Jordan block has size $s_i$).  The $k^{th}$ powers of these values $\lambda_1^k, \ldots, \lambda_r^k$ are all pairwise distinct if and only if the minimal polynomial of $A^k$ has maximal degree $n$.  For this reason, we will say that $A$ is $k^{th}$\textit{--power maximal} if this condition on its Jordan block values ($\lambda_i^k \neq \lambda_j^k$ for $i \neq j$) is satisfied. For example, consider the following matrices: 
\[ A = \begin{bmatrix}
    1 & 1 & 0  & 0 \\
    0 & 1 & 0  & 0 \\
    0 & 0 & i  & 1 \\
    0 & 0 & 0  & i
\end{bmatrix} = J(1, 2) \oplus J(i, 2), \; \; \;  B = \begin{bmatrix}
    1 & 1 & 0  & 0 \\
    0 & 1 & 0  & 0 \\
    0 & 0 & 2i  & 1 \\
    0 & 0 & 0  & 2i
\end{bmatrix} = J(1, 2) \oplus J(2i, 2).\]
Then $A$ is $k^{th}$--power maximal for all $k$ with $k \not \equiv 0 \mod 4$, while $B$ is $k^{th}$--power maximal for all $k$.

If $X = \mathbb{F}(q, \alpha)$ is a quandle of cyclic type, we will say that a representation $\phi: X \rightarrow \text{Conj}(\Aut(V))$ is \textit{maximal} if $\phi(0)$ is $(q-1)^{th}$--power maximal.

If $M$ is any $n\times n$ matrix, the constant map $\phi_M: \mathbb{F}(q, \alpha) \rightarrow \Aut(\mathbb{C}^n)$ given by $\phi_M(0) = \phi_M(1) = M$ is an $n$--dimensional representation of $\mathbb{F}(q, \alpha)$; for brevity we denote this representation by $U_M$ (isomorphic to $\mathbb{C}^n$ as a vector space).  In particular, $U_{J(\lambda, k)}$ yields an indecomposable - yet reducible (for $k>1$) - representation  of $\mathbb{F}(q, \alpha)$, and if a matrix $A$ is in Jordan normal form $A = J(\lambda_1, s_1) \oplus J(\lambda_2, s_2) \oplus \cdots \oplus J(\lambda_r, s_r)$, then we have an isomorphism of quandle representations
\[ U_A \cong U_{J(\lambda_1, s_1)} \oplus U_{J(\lambda_2, s_2)} \oplus \cdots \oplus U_{J(\lambda_r, s_r)}.\]

 \begin{thm}\label{thm:higher_dim_cyclic}
Let $X = \mathbb{F}(q, \alpha)$ be a quandle of cyclic type, and let $\psi: X \rightarrow \mathrm{Conj}(\Aut(V))$ be a maximal quandle representation of $X$ of dimension $> 1$.  Then $\psi$ is a constant map.  If $\rho$ is the characteristic polynomial of $\psi(0)$ and $n_z$ the multiplicity of $z$ as a root of $\rho$, then we have a quandle isomorphism
\[ V \cong \bigoplus_{\rho(z) = 0} U_{J(z, n_z)}. \]
In particular, there are no non--trivial irreducible maximal representations of $X$. 
\end{thm}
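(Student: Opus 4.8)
The plan is to use the presentation $X = F_{q,\alpha} = \langle x, y \mid \ldots\rangle$ from Theorem~\ref{cyclic_classification} (identifying $x,y$ with $0,1$), translate the defining relations into matrix identities for $A := \psi(0)$ and $B := \psi(1)$, and show that maximality forces $A = B$. Since $\psi(u \diamond v) = \psi(v)\psi(u)\psi(v)^{-1}$, the relation $yx^{q-1} = y$ becomes $A^{q-1}BA^{-(q-1)} = B$, i.e. $[A^{q-1}, B] = 0$, and for $k = 1$ the relation $xy^{k} = yx^{\log_{\alpha}(1-\alpha^{k})}$ becomes $BAB^{-1} = A^{j}BA^{-j}$ with $j = \log_{\alpha}(1-\alpha)$. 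As soon as we know $[A,B] = 0$, the second identity collapses to $A = B$; and since $X$ is generated by $x$ and $y$, a straightforward induction on the length of a left-associated word (using that $A$ commutes with itself) then gives $\psi = \psi_A$, a constant map. So the whole theorem reduces to the claim $[A, B] = 0$, together with bookkeeping about the resulting constant representation.

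To prove $[A,B]=0$, put $A$ in Jordan normal form $A = J(\lambda_1, s_1)\oplus\cdots\oplus J(\lambda_r, s_r)$; since $A \in GL(V)$ each $\lambda_i \neq 0$. By definition of maximality the values $\lambda_i^{q-1}$ are pairwise distinct, so the $\lambda_i$ are pairwise distinct as well, and $A$ is nonderogatory; hence its centralizer is $\mathbb{C}[A]$, of dimension $n = \dim V$. I next claim $A^{q-1}$ is also nonderogatory. On the $i$-th block, $J(\lambda_i, s_i)^{q-1} - \lambda_i^{q-1}I$ equals $(q-1)\lambda_i^{q-2}N_i + (\text{higher powers of } N_i)$, where $N_i$ is the nilpotent shift; since $q - 1 \neq 0$ in $\mathbb{C}$ and $\lambda_i \neq 0$, this is $N_i$ times an invertible upper-triangular Toeplitz matrix, hence of rank $s_i - 1$, so $J(\lambda_i, s_i)^{q-1}$ has minimal polynomial $(t-\lambda_i^{q-1})^{s_i}$. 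As the $\lambda_i^{q-1}$ are distinct, the minimal polynomial of $A^{q-1}$ has degree $\sum s_i = n$, so $A^{q-1}$ is nonderogatory and its centralizer is $\mathbb{C}[A^{q-1}]$, also of dimension $n$. Since $\mathbb{C}[A^{q-1}] \subseteq \mathbb{C}[A]$ and both have dimension $n$, they coincide; thus $A$ and $A^{q-1}$ have the same centralizer, and from $[A^{q-1}, B] = 0$ we get $[A, B] = 0$.

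For the structural part, once $\psi = \psi_A$ with $A = J(\lambda_1, s_1) \oplus \cdots \oplus J(\lambda_r, s_r)$ and the $\lambda_i$ distinct, the isomorphism $U_A \cong U_{J(\lambda_1, s_1)}\oplus\cdots\oplus U_{J(\lambda_r, s_r)}$ recorded before the theorem, together with the fact that each root $z$ of the characteristic polynomial $\rho$ of $A$ occurs in exactly one Jordan block, of size $n_z$, yields $V \cong \bigoplus_{\rho(z) = 0} U_{J(z, n_z)}$. Finally, for a constant representation $\psi_A$ a subspace is a subrepresentation exactly when it is $A$-invariant; if $\dim V > 1$ then an eigenvector of $A$ spans a proper nonzero invariant subspace, so $\psi_A$ is reducible. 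Hence there is no nontrivial irreducible maximal representation of dimension $> 1$.

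I expect the commuting step to be the only real obstacle, and within it the delicate point is that maximality is exactly what forces $A^{q-1}$ (not just $A$) to be nonderogatory, so that its centralizer is $\mathbb{C}[A^{q-1}] = \mathbb{C}[A]$; this rests on the elementary observation that raising a single Jordan block with nonzero eigenvalue to the power $q-1$, over a field of characteristic $0$, does not drop the rank of its nilpotent part. The translation of the quandle relations, the final induction showing $\psi$ is constant, and the decomposition are all routine.
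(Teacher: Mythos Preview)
Your proof is correct and takes a genuinely different route from the paper's. The paper establishes $\psi(0)=\psi(1)$ by invoking Proposition~\ref{cylic_corollary}, whose proof uses \emph{all} of the relations $M^kJM^{-k}=J^{\log_\alpha(1-\alpha^k)}MJ^{-\log_\alpha(1-\alpha^k)}$ for $1\le k\le q-2$, passes through Lemma~\ref{upper_triangular} (forcing $M$ upper triangular from $M^{q-1}=J^{q-1}$), then Lemma~\ref{MJlemma} (pinning down each superdiagonal of $M$), which in turn rests on the nonexistence of solutions to the nonlinear system in Lemma~\ref{nonlinearsystem}.

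You instead exploit only the commutator relation $[A^{q-1},B]=0$ and a single relation (the one with $k=1$). Your decisive observation is that maximality makes not just $A$ but $A^{q-1}$ nonderogatory: raising a Jordan block $J(\lambda,s)$ with $\lambda\neq 0$ to the $(q-1)$st power over $\mathbb{C}$ preserves the nilpotent index, and the $(q-1)$st powers of the eigenvalues are distinct by hypothesis. Hence the centralizer of $A^{q-1}$ is $\mathbb{C}[A^{q-1}]=\mathbb{C}[A]$, so $[A^{q-1},B]=0$ already forces $[A,B]=0$; one further relation then collapses $BAB^{-1}=A^jBA^{-j}$ to $A=B$. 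This is markedly shorter, bypasses all four Appendix lemmas, and makes transparent exactly where and how the maximality hypothesis is used. The paper's approach, on the other hand, extracts finer structural information along the way (e.g.\ the explicit matching of superdiagonals), which could be of independent use but is not needed for the theorem as stated.
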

\begin{proof}
 Let $J = \psi(0)$, and $M = \psi(1)$.  Assume without loss of generality that $J$ is in Jordan canonical form.  Then $J$, $M$ is a pair of matrices that satisfies 
\[ J^{q-1}MJ^{1-q} = M, \; \; M^{q-1} J M^{1-q} = J, \]
\[ M^k J M^{-k} = J^{\log_{\alpha}(1 - \alpha^k)} M J^{-\log_{\alpha}(1 - \alpha^k)}, \; \; \; \;  1 \leq k \leq q-2.\]
It follows that $J$ and $M$ are similar, hence they have the same Jordan canonical form.  
By Corollary \ref{cylic_corollary}, we must have $J = M$.  Therefore $\psi: X \rightarrow \text{Conj}(\text{Aut}(V)) $ is the constant map $\psi: X \mapsto J$, hence $V \
\cong U_{J}\cong \bigoplus_{\rho(z) = 0} U_{J(z, n_z)}$.  
\end{proof}

        \section{Examples of non-decomposability}\label{sec:non_decomposability}

        We give first an example of a quandle homomorphism $q:S_3 \rightarrow S_3$, where $S_3=\langle r, \theta \mid r^3=1=\theta^2, \theta r \theta=r^2 \rangle $ is considered with conjugation operation, that is not a group homomorphism. We want to define thus, a map satisfying:
        \begin{equation*}\label{con}
        q(yxy^{-1})=q(y) q(x) q(y)^{-1},
        \end{equation*}  for all $x, y \in S_3$, that is not a group homomorphism. Choose any nontrivial element $R \neq 1$.  Then define a map $q$ by mapping $1, \theta, \theta r, \theta  r^2$ to $1$ and mapping $r, r^2$ to $R$.  It is straightforward to see that $q$ is a quandle homomorphism.  However, since $q(\theta \; r)=1$ and $q(\theta)q(r)=R$ it follows that $q$ is \textit{not} a group homomorphism.
        
        A quandle representation induces a quandle  representation on the inner group of the quandle, but the latter does not necessarily defines a group representation. All this suggests that representing a quandle might be much different than representing a group, even in the most common case in which the quandle we are considering is a conjugation quandle. In fact this suggestion turns out to be correct. We show by an explicit example that, by contrast to the case of groups, complementary invariant subspaces may not exist for some representations of quandles. As a consequence, Maschke's Theorem does not hold for quandles.
        
        \begin{ex}
        	Fix a positive integer $n$ greater than or equal to $2$ and consider the dihedral quandle $\mathbb{Z}_{2n}= \{0, 2, \cdots, 2n\} \sqcup \{1, 3, \cdots , 2n-1\}$ as a union of its two orbits. Consider the map $\mathbb{Z}_{2n} \rightarrow GL(2,\C)$ sending the orbit $\{0, 2, \cdots, 2n\}$ to the identity matrix and the orbit $\{1, 3, \cdots , 2n-1\}$ to the matrix
        $	\begin{bmatrix}
        		1  & 1 \\
        		0  & 1 
        	\end{bmatrix}.$  It is clear that $W=\C e_1$ is invariant, where $e_1 = \begin{bmatrix}
        		1  \\
        		0 
        	\end{bmatrix}$.  A simple check shows that $W$ does not have a complementary invariant subspace inside $\C^2$.  More generally, we can define a quandle representation of $\mathbb{Z}_{2n}$ into $GL(m,\C)$ by mapping the orbit $\{0, 2, \cdots, 2n\}$ to the identity matrix and the orbit $\{1, 3, \cdots , 2n-1\}$ to any invertible matrix $B$.  Assume that 
        	\[
        1+	\sum _{i=1}^k \mu_G(\lambda_i) = 	\sum _{j=1}^k \mu_A(\lambda_i), 
        	\] 
        	where $\lambda_i$ are the eigenvalues of $B$ and $\mu_G(\lambda_i)$ and $\mu_A(\lambda_i)$ are respectively the geometric and algebraic multiplicities of $\lambda_i$.  In this case the representation is not completely reducible. In general, the same procedure will work for non-connected quandles. 
        \end{ex}
        
        The representation theory of quandles, therefore, appears to be drastically different from the representation theory of groups. The easiest quandle we can think of, the trivial quandle on one element admits non completely reducible representations: Consider the representation $ \{1\} \longrightarrow GL_2(\mathbb{R})$ defined by $ 1 \mapsto \begin{bmatrix}
        1  & 1 \\
        0  & 1 
        \end{bmatrix}$. As above, it is easy to verify that this representation is not completely reducible. The same procedure shows that the trivial quandle of any cardinality admits non completely reducible representations and more generally, any quandle admits a non completely reducible representation.
        
        \begin{ex}
        	Let $X = \sqcup_i X_i$ be a quandle partitioned into its orbits. Then map the whole orbit $X_1$ to the (usual) matrix $ \begin{bmatrix}
        		1  & 1 \\
        		0  & 1 
        	\end{bmatrix}$, and any other orbit to the identity matrix. This representation is not completely reducible. 
        	\end{ex}

        %
        
         \section{Appendix}

         In this Appendix we state and prove some technical results.  We also consider some explicit constructions of representations. We show that reducible representations of quandles need not be completely reducible, and we also give some computations for the dihedral quandle of order $6$ which exhibit its decomposition into irreducible subrepresentations.

  \begin{lem}\label{nonlinearsystem}
 For any prime power $q$ and $\alpha$ a primitive root in $\mathbb{F}_q$, the system of equations 
        \[ 1 -  x^k = x^{\log_{\alpha}(1 - \alpha^k)}, \; \; 1 \leq k \leq q-2\]
        has no solutions.
   \end{lem}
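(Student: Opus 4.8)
The plan is to read the system over $\mathbb{C}$: I seek a single complex number $x$ satisfying $1 - x^k = x^{\log_{\alpha}(1-\alpha^k)}$ for every $k$ with $1 \le k \le q-2$, and I derive a contradiction. Write $m_k := \log_{\alpha}(1-\alpha^k)$, so that $m_k$ is the integer in $\{1,\dots,q-2\}$ with $\alpha^{m_k} = 1-\alpha^k$, and each equation reads $x^k + x^{m_k} = 1$. Two degenerate cases are immediate: $x=0$ forces $1=0$ (since $m_k\geq 1$), and $x=1$ forces $0=1$; so any solution lies in $\mathbb{C}^\ast\setminus\{1\}$. The key structural fact, taken from the proof of Theorem \ref{cyclic_classification}, is that under the substitution $z=\alpha^k$ the assignment $k\mapsto m_k$ is precisely the involution $z\mapsto 1-z$ of $\mathbb{F}_q\setminus\{0,1\}$; in particular $k\mapsto m_k$ is a permutation of $\{1,\dots,q-2\}$.

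The first genuine step I would carry out is a summation. Adding all $q-2$ equations and using that $k\mapsto m_k$ permutes $\{1,\dots,q-2\}$ gives $2\sum_{k=1}^{q-2}x^k = q-2$, that is,
\[ \sum_{k=1}^{q-2} x^k = \frac{q-2}{2}. \]
This single identity already eliminates every root of unity: if $x^{q-1}=1$ with $x\neq 1$, then $\sum_{k=1}^{q-2}x^k=\bigl(\sum_{k=0}^{q-2}x^k\bigr)-1=-1$, which disagrees with $(q-2)/2$ for all $q>0$. Hence no solution with $x^{q-1}=1$ exists, and it remains to exclude $x$ with $x^{q-1}\neq 1$.

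For the remaining case I would manufacture extra relations from polynomial identities that hold over $\mathbb{C}$ but are invisible to the $\mathbb{F}_q$ bookkeeping. The factorization $1-x^{2k}=(1-x^k)(1+x^k)$ together with $1-x^{2k}=x^{m_{2k}}$ and $1-x^k=x^{m_k}$ yields the honest complex relation $1+x^k=x^{\,m_{2k}-m_k}$, and pairing it with the original $1-x^k=x^{m_k}$ produces, for each $k$, the two relations $x^{m_k}+x^{\,m_{2k}-m_k}=2$ and $x^{\,m_{2k}-m_k}-x^{m_k}=2x^k$. I would then specialize these to structured indices, in particular the unique fixed point $k^\ast$ of the involution (where $\alpha^{k^\ast}=\tfrac12$ in odd characteristic, forcing $2x^{k^\ast}=1$) and the index $k+\tfrac{q-1}{2}$, at which $1+\alpha^k=1-\alpha^{k+(q-1)/2}$ in $\mathbb{F}_q$. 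At the level of $\mathbb{F}_q$ this gives the congruence $m_{2k}-m_k\equiv m_{k+(q-1)/2}\pmod{q-1}$, so over $\mathbb{C}$ the two sides differ only by a known multiple of $q-1$; substituting that difference turns $1+x^k=x^{\,m_{2k}-m_k}$ into a genuine equation coupling $x^k$ to the wraparound factor $x^{q-1}$ (and to $w:=x^{(q-1)/2}$). I expect these coupled relations to be over-determined enough to force $w^2=x^{q-1}=1$, reducing everything to the root-of-unity case already dispatched by the summation above.

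The main obstacle is exactly this last step: converting the $\mathbb{F}_q$-level congruences into equalities of complex powers, i.e. controlling the wraparound by $q-1$ that separates the $\mathbb{F}_q$ identities from the $\mathbb{C}$ identities. Equivalently, the entire content is the setwise coprimality in $\mathbb{C}[X]$ of the finitely many distinct polynomials $p_k(X)=X^k+X^{m_k}-1$, which pair up under the involution (leaving about $(q-1)/2$ of them, plus the fixed-point polynomial $2X^{k^\ast}-1$); a common root is precisely a solution of the system, so I must show their gcd is trivial. I would handle the even- and odd-characteristic cases separately, since the derivation of the auxiliary relation uses $-1=\alpha^{(q-1)/2}$, and I would dispose of the smallest orders of $q$ — where the distinct $p_k$ collapse to a single polynomial and the coprimality argument has no leverage — by direct inspection.
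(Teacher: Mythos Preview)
Your proposal has a genuine gap: you never close the argument for $x$ with $x^{q-1}\neq 1$, and you say so yourself. What is striking is that you already hold the two facts that finish the proof --- the summed identity $\sum_{k=1}^{q-2}x^k=(q-2)/2$ and the fixed-point equation $2x^{k^\ast}=1$ --- yet you do not combine them. The paper's finishing move is arithmetic, not combinatorial. Set $N=k^\ast$. The polynomial $X^N-2$ is Eisenstein at the prime $2$, hence irreducible over $\mathbb{Q}$; its reciprocal $2X^N-1$ is therefore also irreducible, so any complex root $x_0$ satisfies $[\mathbb{Q}(x_0):\mathbb{Q}]=N$. Now rewrite your summation as $(1-x_0^{\,q-1})/(1-x_0)=q/2$, divide $q-1=bN+r$ with $0\le r<N$, and substitute $x_0^{\,N}=\tfrac12$ to obtain $1-2^{-b}x_0^{\,r}=\tfrac{q}{2}(1-x_0)$. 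This is a nontrivial polynomial relation over $\mathbb{Q}$ of degree strictly less than $N$, contradicting the minimal-polynomial degree. No wraparound bookkeeping, no factorization identities, and no separate root-of-unity case are needed.

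Your plan to manufacture the auxiliary relations $1+x^k=x^{\,m_{2k}-m_k}$ and then force $x^{q-1}=1$ is not wrong in spirit, but it is strictly harder than necessary: those relations hold cleanly only for $2k\le q-2$, and beyond that range you must carry precisely the wraparound factor $x^{q-1}$ you are trying to pin down, so the argument chases its own tail. The Eisenstein-plus-degree step is exactly what breaks that circularity, and it uses nothing beyond the two equations you had already written down. (Your instinct that even characteristic needs separate handling is correct: when $q$ is a power of $2$ the involution $k\mapsto m_k$ has no fixed point, so the equation $2x^{N}=1$ is unavailable and this argument --- like the paper's --- does not apply as written.)
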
     
   \begin{proof}
   
We will prove a more general statement: \textit{Let $M > 1$,  and suppose $\phi$ is an order-2 permutation of $\left\{ 1, 2,  \ldots, M \right\}$ with a fixed point $N$.  Let $S$ be the set of equations 
\[ S = \left\{ x^k + x^{\phi(k)} - 1 = 0, \; \; \; 1 \leq k \leq M \right\}.\]
Then $S$ has no simultaneous solutions (in $\mathbb{C}$).}

The proof of this statement follows.  Since $N$ is a fixed point, $S$ contains the equation $2x^N -1 =0$.  Furthermore, if we add all equations in $S$, we get $\sum_{i=1}^N x^i = M/2$.  Adding 1 to both sides of this equation gives us 
\[   \frac{1 - x^{M+1}}{1-x} = \sum_{i=0}^M x^i = \frac{M+2}{2}. \]
So any solution of $S$ must also be a solution of the system 
\begin{equation}\label{nonlinear1}
2x^N - 1 = 0,
\end{equation}
\begin{equation}\label{nonlinear2}
1- x^{M+1} = \frac{M+2}{2}(1-x).
\end{equation}
The polynomial $f(x)= x^N - 2$ is irreducible over $\mathbb{Q}$ by Eisenstein's criterion, hence its reciprocal $\tilde{f}(x):= 2x^N-1$ appearing in Equation \ref{nonlinear1} is irreducible over $\mathbb{Q}$ as well.  

Using the division algorithm, we write 
\[ M+1 = bN+r, \; \; \; 0 \leq r < N.\]
Now suppose $x_0$ is a solution for $S$. Then it must also satisfy Equations (\ref{nonlinear1}) and (\ref{nonlinear2}).  Therefore $x_0^N = 1/2$, $x_0^{M+1} = x_0^{bN + r} = \left( x_0 ^N \right)^b x_0^r = (1/2)^b x_0^r$, and so
\[ 1 - \left( \frac{1}{2} \right)^b x_0^r = 1 -x_0^{M+1} = \frac{M+2}{2}(1- x_0).\]
Therefore $x_0$ is a root of a polynomial in $\mathbb{Q}[x]$ of degree $r$, hence the minimal polynomial of $x_0$ (over $\mathbb{Q}$) must have degree $ \leq r$.  On the other hand, since $\tilde{f}(x) = 2x^N -1$ is irreducible over $\mathbb{Q}$ and $2x_0^N - 1 = 0$,  the minimal polynomial of $x_0$ has degree $N > r$.   This contradiction shows that $S$ can have no solutions, and the statement at the beginning of the proof is demonstrated. 

To conclude the proof of the Lemma, note that the map $k \mapsto \log_{\alpha}(1-\alpha^k)$ is an order-2 bijection of the set $\left\{ 1, \ldots, q-2 \right\}$, with one fixed point (this fixed point is $N = -\log(2) \text{ mod } (q-1)$).  Now it follows that the set of equations 
    \[ \left\{  x^k + x^{\log_{\alpha}(1 - \alpha^k)} -1 = 0, \; \; \; 1 \leq k \leq q- 2 \right\} \]
  has no solutions.

\end{proof}

In general, if a square matrix $A$ is upper triangular, a $k^{th}$ root of $A$ ($X$ such that $X^k = A$) is not necessarily upper triangular.  But a sufficient condition for this to hold is given here:
\begin{lem}\label{upper_triangular}
Let $A$ be an $n \times n$ square matrix and $k$ a positive integer such that $A$ is $k^{th}$--power maximal.  If $A^k$ is upper triangular, then $A$ is also upper triangular. 
\end{lem}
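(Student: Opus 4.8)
The plan is to reduce the statement to a single algebraic fact: under the stated hypothesis, $A$ is a \emph{polynomial} in $A^k$. Granting this, the conclusion is immediate, since the upper triangular $n\times n$ matrices form a subalgebra of $M_n(\C)$ that contains $A^k$, hence contains $f(A^k)$ for every $f\in\C[x]$, and in particular contains $A$.

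So the real content is to produce $f\in\C[x]$ with $A=f(A^k)$. I would compare the commutative subalgebras $\C[A]$ and $\C[A^k]$ of $M_n(\C)$. Since $A^k\in\C[A]$, we have an inclusion $\C[A^k]\subseteq\C[A]$. For any square matrix $B$ one has $\C[B]\cong\C[x]/(m_B)$, where $m_B$ denotes the minimal polynomial of $B$, so $\dim_\C\C[B]=\deg m_B$; in particular $\dim_\C\C[A]=\deg m_A\le n$. On the other hand, $k^{th}$-power maximality of $A$ says that the Jordan block eigenvalues $\lambda_i$ have pairwise distinct $k^{th}$ powers, which the text records as equivalent to $m_{A^k}$ having degree $n$; thus $\dim_\C\C[A^k]=n$. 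Combining these, $n=\dim_\C\C[A^k]\le\dim_\C\C[A]\le n$, so all the inequalities are equalities and $\C[A^k]=\C[A]$. In particular $A\in\C[A^k]$, i.e. $A=f(A^k)$ for some $f$, which is exactly what the first paragraph needs.

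I do not expect a genuine obstacle here; the only point requiring a little care is the identity $\deg m_{A^k}=n$. If one prefers not to quote the equivalence stated in the text, it can be checked directly from the Jordan decomposition $A=\bigoplus_i J(\lambda_i,s_i)$ (with the $\lambda_i^k$ pairwise distinct, and the $\lambda_i$ nonzero, which is automatic when $A$ is invertible, as in all our applications): writing $J(\lambda_i,s_i)=\lambda_i I+N_i$ with $N_i$ nilpotent of index $s_i$, one gets $J(\lambda_i,s_i)^k-\lambda_i^k I=k\lambda_i^{k-1}N_i+\binom{k}{2}\lambda_i^{k-2}N_i^2+\cdots$, which is nilpotent of index exactly $s_i$ because $k\lambda_i^{k-1}\neq 0$; hence $m_{A^k}=\prod_i(x-\lambda_i^k)^{s_i}$, of degree $\sum_i s_i=n$. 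Alternatively one may build $f$ by Hermite interpolation: since the $\lambda_i^k$ are distinct, choose $f$ agreeing to order $s_i$ at $\lambda_i^k$ with a branch of the $k^{th}$ root sending $\lambda_i^k$ to $\lambda_i$, so that $f(A^k)=A$ by the holomorphic functional calculus. Once $A=f(A^k)$ is in hand, the lemma follows from the first paragraph.
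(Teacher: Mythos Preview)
Your argument is correct and takes a genuinely different route from the paper's. The paper proves the lemma by showing directly that the standard flag $W_j=\operatorname{span}(e_1,\dots,e_j)$ is $A$-stable: it identifies each $W_j$ as the kernel of the polynomial $\prod_{i\le j}(x-a_i)$ evaluated at $A^k$ (where the $a_i$ are the diagonal entries of $A^k$), and then uses only that $A$ commutes with $A^k$ to conclude these kernels are $A$-invariant. The key step there, as in your argument, is that $\deg m_{A^k}=n$, which forces the chain of kernels to be strictly increasing. Your approach packages the same ingredient more conceptually: from $\dim\C[A^k]=\deg m_{A^k}=n\ge\dim\C[A]$ together with $\C[A^k]\subseteq\C[A]$ you get $\C[A^k]=\C[A]$, hence $A=f(A^k)$ lies in the upper-triangular subalgebra. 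This is shorter and yields the stronger conclusion $A\in\C[A^k]$; the paper's proof, on the other hand, makes explicit the invariant flag, which is closer in spirit to how the lemma is later applied. Both proofs implicitly need the eigenvalues of $A$ to be nonzero for the equivalence ``$\lambda_i^k$ pairwise distinct $\Leftrightarrow\deg m_{A^k}=n$'' to hold (e.g.\ $A=\begin{pmatrix}0&0\\1&0\end{pmatrix}$ is $2$nd-power maximal in the Jordan-block sense with $A^2=0$ upper triangular but $A$ not); you flag this, and it is harmless since every use of the lemma in the paper has $A$ invertible.
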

\begin{proof}
Suppose $A$ satisfies the given conditions, and let the diagonal entries of $A^k$ be $a_1, \ldots, a_n$.  For $1 \leq j \leq n$, let 
\[ T_j = \prod_{i=1}^j \left( A^k -a_i I \right), \]
and let $W_j = \text{span}(e_1, \ldots, e_j)$ (where $e_i$ is the $i^{th}$ standard basis vector). 

Upper triangularity of $A^k$ means that all subspaces $W_j$ are $A^k$--stable.  Also note that any solution $X$ of $X^k = A^k$ will commute with $A^k$, and therefore the kernel of any polynomial in $A^k$ will be $X$--stable. If we can show that each $W_j$ is $A$--stable, it will follow that $A$ is upper triangular.  We will do this by showing that each $W_j$ is the kernel of the polynomial $\displaystyle{T_j(\lambda) = \prod_{i=1}^j (\lambda - a_i)}$, evaluated at $\lambda = A^k$. 

An inductive argument shows that upper triangularity of $A^k$ guarantees containment $W_j \subseteq \ker(T_j)$, for all $j$.  To show equality, we will argue that all subspaces $\ker T_j$ must be distinct.  This is sufficient for the following reason: suppose two kernels $\ker(T_r)$ and $\ker(T_s)$ coincide, where $r < s$.  Then 
\[ \mathbb{C}^n = \ker(T_n) = \begin{cases}
\ker\left( \prod_{i\leq r} \left( A^k -a_i I \right) \prod_{i>s}  \left( A^k -a_i I \right) \right), & s < n , \\
\ker\left( \prod_{i\leq r} \left( A^k -a_i I \right)\right),& s = n;
\end{cases}
\]
hence the minimal polynomial of $A^k$ has degree strictly less than $n$.  But this contradicts our assumption that $A$ is $k^{th}$--power maximal.  Therefore all kernels $\ker(T_j)$ must be distinct, and we must have proper containments 
\[ \left\{ \mathbf{0} \right\} \subsetneq \ker(T_1) \subsetneq \ker(T_2) \subsetneq \cdots \subsetneq \ker(T_n) = \mathbb{C}^n.\]
This guarantees that $\dim(\ker(T_j)) = j$, and since $W_j \subseteq \ker(T_j)$, we obtain the desired equalities $W_j = \ker(T_j)$.  Since each $W_j$ is the kernel of a polynomial in $A^k$, it follows that each $W_j$ is $A$--stable, hence $A$ is upper--triangular.

\end{proof}

Let $J$ be the matrix in Jordan canonical form 
        \setcounter{MaxMatrixCols}{20}
        \[ J = J(\lambda_1, s_1) \oplus 
             J(\lambda_2, s_2) \oplus 
          \cdots 
            \oplus  J(\lambda_k, s_k).
            \]
        Also let $M$ be the upper triangular matrix with the same diagonal entries as $J$, and entries $a_{i,j}, b_i$ on the superdiagonal given as follows:
        \[ M = \begin{bmatrix} 
                   \lambda_1  & a_{1,1}   \\
                     0 & \lambda_1 & \ddots \\
                    & & \ddots & a_{1, n_1-1} \\
                      0 & 0  &  \cdots & \lambda_1  & \mathbf{b_1} \\
          \vdots & \vdots && 0 &     \lambda_2  & a_{2,1}   \\
               &&&&      0 & \lambda_2 & \ddots \\
                &&&&    & & \ddots & a_{2, n_2-1} \\
                  &&&&    0 & 0  &  \cdots & \lambda_2 & \mathbf{b_2} \\
                   &&&&\vdots & \vdots && 0 &   \lambda_3  & a_{3,1}   \\
               &&&&&&&&      0 & \lambda_3 & \ddots \\
                &&&&&&&&    & & \ddots & a_{3, n_3-1} \\
                  & &&& &&&&   0 & 0  &  \cdots & \lambda_3 & \mathbf{b_3} \\
                    &&&& &&&& \vdots &  \vdots &  & & \ddots \\
        \end{bmatrix}
        \]
    \begin{lem}
    Let $J$ and $M$ be square matrices as given above.  If $J$ and $M$ satisfy the conditions 
    \[ M^k J M^{-k} = J^{\log_{\alpha}(1 - \alpha^k)} M J^{-\log_{\alpha}(1 - \alpha^k)}, \; \; \; \; 1 \leq k \leq q - 2, \]
    then all superdiagonal $a_{i,j} = 1$, $1 \leq j \leq n_i-1$, and all $b_i = 0$, $1 \leq i \leq k-1$.
        \end{lem}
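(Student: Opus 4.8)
The plan is to work in the algebra of upper triangular matrices graded by distance from the diagonal and to extract the degree-one component of each of the given relations.

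First I would introduce notation adapted to this grading. Write $D$ for the common diagonal of $J$ and $M$, so that $J = D + N$, where $N$ is the Jordan superdiagonal (a $1$ in each superdiagonal slot inside a block and a $0$ at each junction between consecutive blocks), and $M = D + P$, where $P$ is the first superdiagonal carrying the entries $a_{i,j}$ inside the blocks and $b_i$ at the junctions. Then $N$ and $P$ are homogeneous of graded degree $1$, one has $[D,N] = 0$, every matrix occurring below is invertible (the diagonal entries are the nonzero $\lambda_i$), and the assertion to be proved is exactly $P = N$. The grading is multiplicative: a product of the $d$-th and $e$-th superdiagonals lies on the $(d+e)$-th superdiagonal, so $D^{c}$ has degree $0$ for any exponent $c$, and inverses such as $J^{-c_k}$ and $M^{-k}$ have expansions beginning in degree $0$.

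Next I would expand both sides of $M^k J M^{-k} = J^{c_k} M J^{-c_k}$, with $c_k := \log_{\alpha}(1 - \alpha^k)$, modulo graded degree $\ge 2$. Using $[D,N] = 0$, the conjugation on the right collapses to $D + D^{c_k} P D^{-c_k}$; writing $Q_k := \sum_{j=0}^{k-1} D^{j} P D^{k-1-j}$ for the degree-one part of $M^k$, a short first-order perturbation (in which the two contributions proportional to $N$ coming from $J^{\pm c_k}$-conjugation cancel) turns the left-hand side into $D + N - [D, Q_k]\,D^{-k}$. Equating degree-one parts gives, for every $1 \le k \le q-2$, the identity $N - [D, Q_k]\,D^{-k} = D^{c_k} P D^{-c_k}$, all of whose terms are supported on the first superdiagonal; I would then compare the $(p, p+1)$ entry for each $p$.

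The final step is the entrywise case distinction. If $p$ and $p+1$ lie in the same Jordan block, then $d_p = d_{p+1}$, so the commutator term and the conjugation act trivially and the identity reads $1 = P_{p,p+1}$, i.e. $a_{i,j} = 1$. If $p$ and $p+1$ straddle the junction between two blocks with eigenvalues $\lambda_i$ and $\lambda_{i+1}$, then $N_{p,p+1} = 0$; if $\lambda_i = \lambda_{i+1}$ the same collapse forces $b_i = 0$ directly, while if $\lambda_i \neq \lambda_{i+1}$ a finite geometric sum gives $(Q_k)_{p,p+1} = b_i(\lambda_i^k - \lambda_{i+1}^k)/(\lambda_i - \lambda_{i+1})$, so that, with $t := \lambda_i/\lambda_{i+1}$, the degree-one identity at that slot becomes $b_i\big(t^{c_k} + t^k - 1\big) = 0$, i.e. $b_i\big(1 - t^k - t^{\log_{\alpha}(1-\alpha^k)}\big) = 0$, for all $1 \le k \le q-2$. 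Were $b_i \neq 0$, this would make $t$ a simultaneous solution of the system of Lemma \ref{nonlinearsystem}, which has none; hence $b_i = 0$. I expect the main obstacle to be the bookkeeping in the first-order expansion — confirming precisely which commutators survive and that the $N$-terms cancel — together with the junction computation of $(Q_k)_{p,p+1}$; the conceptual core, namely that the junction relations reduce to an unsolvable system, is supplied directly by Lemma \ref{nonlinearsystem}.
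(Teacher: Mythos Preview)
Your argument is correct and follows the same overall strategy as the paper: isolate the first-superdiagonal entries of both sides of the relation, obtain $a_{i,j}=1$ at the interior slots and the system $b_i\bigl(1-t^k-t^{\log_{\alpha}(1-\alpha^k)}\bigr)=0$ (with $t=\lambda_i/\lambda_{i+1}$) at the junctions, and then invoke Lemma~\ref{nonlinearsystem} to force $b_i=0$.

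The only difference is organizational rather than mathematical. The paper computes the superdiagonals of $M^kJM^{-k}$ and $J^{c_k}MJ^{-c_k}$ by an explicit induction on $k$ (tracking the entries through the recursion $M(M^kJM^{-k})M^{-1}$), whereas you obtain the same formulas in one stroke by working in the graded algebra of strictly upper-triangular matrices and reading off the degree-one component. Your first-order expansion is correct: the two $\pm c_kN$ contributions on the right really do cancel because $[D,N]=0$, and your identification $(Q_k)_{p,p+1}=b_i(\lambda_i^k-\lambda_{i+1}^k)/(\lambda_i-\lambda_{i+1})$ via the geometric sum gives exactly the paper's $b_i(1-(\lambda_i/\lambda_{i+1})^k)$ on the left after multiplying by $-(\lambda_i-\lambda_{i+1})\lambda_{i+1}^{-k}$. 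Your framework is cleaner and would also streamline the companion Lemma~\ref{MJlemma}, but the content and the key lemma invoked are identical.
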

    \begin{proof}
    Let $k$ be any positive integer.  First we show the product $M^k J M^{-k}$ is upper triangular, with the same diagonal entries as $M$ and superdiagonal entries given as follows: 
     \[ ( M^k J M^{-k})_{s, s+1} = \begin{cases}
            1, &  s \neq n_j  \\
            b_j\left( 1- \lambda_j^k / \lambda_{j+1}^k \right), & s = n_j
        \end{cases} \]
    \[ M^k J M^{-k} = \begin{bmatrix} 
                   \lambda_1  & 1   \\
                     0 & \lambda_1 & \ddots \\
                    & & \ddots & 1 \\
                      0 & 0  &  \cdots & \lambda_1  & b_1(1 - \frac{\lambda_1^k}{\lambda_2^k}) \\
          \vdots & \vdots &&&     \lambda_2  & 1  \\
               &&&&      0 & \lambda_2 & \ddots \\
                &&&&    & & \ddots & 1 \\
                  &&&&    0 & 0  &  \cdots & \lambda_2 & b_2(1 - \frac{\lambda_2^k}{\lambda_3^k}) \\
                   &&&&\vdots & \vdots &&&   \lambda_3  & 1  \\
               &&&&&&&&      0 & \lambda_3 & \ddots \\
                &&&&&&&&    & & \ddots & 1 \\
                  &&&& &&&&   0 & 0  &  \cdots & \lambda_3 & \\
                    &&&& &&&& & &  & & \ddots \\
        \end{bmatrix}
        \]
        Furthermore, the product $J^k M J^{-k}$ is also upper triangular, with the same diagonal entries as $J$ and superdiagonal entries given as follows: 
        \[ ( J^k M J^{-k})_{s, s+1} = \begin{cases}
            M_{s,s+1}, &  s \neq n_j  \\
            b_j\left( \lambda_j^k / \lambda_{j+1}^k \right), & s = n_j
        \end{cases} \]
        \[ J^k M J^{-k} = \begin{bmatrix} 
                   \lambda_1  & a_{1,1}   \\
                     0 & \lambda_1 & \ddots \\
                    & & \ddots & a_{1, n_1-1} \\
                      0 & 0  &  \cdots & \lambda_1  & \hspace{.2 in} b_1(\lambda_1^k / \lambda_2^k) \\
          \vdots & \vdots &&&     \lambda_2  & a_{2,1}  \\
               &&&&      0 & \lambda_2 & \ddots \\
                &&&&    & & \ddots & a_{2, n_2-1} \\
                  &&&&    0 & 0  &  \cdots & \lambda_2 & \hspace{-.2 in} b_2(\lambda_2^k / \lambda_3^k)) \\
                   &&&&\vdots & \vdots &&\hspace{.4 in} \ddots &   &  
        \end{bmatrix}
        \]
        For the first assertion: the base case $(k=1)$ can be verified by direct computation. For the inductive step, we calculate $M(M^k J M^{-k})M^{-1}$.  First we find $M(M^k J M^{-k})$ on the diagonal and superdiagonal entries.  The diagonal entries are $\lambda_1^2, \ldots, \lambda_k^2$ (with the same multiplicities that occur in $J$ and $M$).  The superdiagonal entries are, in order: 
        \[\lambda_1 (1 + a_{1,1}),\ldots, \lambda_1(1+a_{1, n_1-1}), \\
         b_1\left( \lambda_1 + \lambda_2  - \frac{\lambda_1^{k+1}}{\lambda_2^k}\right), \]
          \[\lambda_2 (1 + a_{2,1}),\ldots, \lambda_2(1+a_{2, n_2-1}), \\
         b_2\left( \lambda_2  +  \lambda_3  - \frac{\lambda_2^{k+1}}{\lambda_3^k}\right), \]
         \[ \vdots \]
The inverse of $M$ has the following diagonal and superdiagonal entries: 
\[  M^{-1} = \begin{bmatrix} 
                   \lambda_1^{-1}  & -a_{1,1} / \lambda_1^2   \\
                     0 & \lambda_1^{-1} & \ddots \\
                    & & \ddots & - a_{1 ,n_1-1} / \lambda_1^2  \\
                      0 & 0  &  \cdots & \lambda_1^{-1}  & - b_1 / \lambda_1 \lambda_2 \\
          \vdots & \vdots &&&     \lambda_2^{-1}  & - a_{2,1} / \lambda_2^2   \\
               &&&&      0 & \lambda_2^{-1} & \ddots \\
                &&&&    & & \ddots &  - a_{2, n_2-1} / \lambda_2^2 \\
                  &&&&    0 & 0  &  \cdots & \lambda_2^{-1} & \\
                   &&&&\vdots & \vdots && \hspace{0.4in} \ddots & &  \\
        \end{bmatrix}\]
    Multiplying $ M( M^k J M^{-k})$ with $M^{-1}$ yields the desired expressions on the diagonal and superdiagonal entries.  The proof for the second product  $J^k M J^{-k}$ uses a similar inductive argument.
    
    When we impose the conditions 
    \[ M^k J M^{-k} = J^{\log_{\alpha}(1 - \alpha^k)} M J^{-\log_{\alpha}(1 - \alpha^k)}, \; \; \; \; 1 \leq k \leq q - 2, \]
    we obain the following relations on the superdiagonal entries of $M$:
    \[ a_{ij} = 1, \text{ for all superdiagonal } a_{ij},\]
    and 
    \[ b_i \left( 1 - (\lambda_i / \lambda_{i+1})^k \right) = b_i\left( \lambda_i / \lambda_{i+1} \right)^{\log_{\alpha}(1 - \alpha^k)}, \; \; 1 \leq i \leq s-1, \; \; 1 \leq k \leq n-2.\]
    If some $b_i \neq 0$, then the ratio $r = \lambda_i / \lambda_{i+1}$ must satisfy all polynomials 
    \[ 1 - r^k = r^{\log_{\alpha}(1 - \alpha^k)}, \; \; 1 \leq k \leq q-2.\]
Lemma \ref{nonlinearsystem} prohibits any such solution, whence we conclude that all $b_i = 0$, and the Lemma is proven.
\end{proof}

    \begin{lem}\label{MJlemma}
        Let $J = J(\lambda_1, s_1) \oplus 
             J(\lambda_2, s_2) \oplus 
          \cdots 
            \oplus  J(\lambda_k, s_r)$.  Suppose $M$ is an upper triangular matrix with 
        \[ M_{i,i} = J_{i,i}, \;\; \; \; M_{i, i+1} = J_{i, i+1},\]
        so that $M$ and $J$ are equal on the diagonal and superdiagonal, and assume $M^k J M^{-k} = J^{\log_{\alpha}(1 - \alpha^k)} M J^{-\log_{\alpha}(1 - \alpha^k)}$ for all $ 1 \leq k \leq q-2$.  Then $M=J$.
    \end{lem}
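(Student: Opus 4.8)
The plan is to prove $M=J$ by a filtration argument that strips off the bands of $M-J$ one at a time, starting just above the superdiagonal. Put $E:=M-J$; by hypothesis $E$ vanishes on the main diagonal and the first superdiagonal, so it is supported on entries $(i,j)$ with $j\ge i+2$. For $d\ge 0$ let $\mathcal F_d$ be the space of matrices $X$ with $X_{ij}=0$ whenever $j-i<d$, and for $X\in\mathcal F_d$ write $[X]_d$ for its $d$-th band part (the matrix agreeing with $X$ on the entries $(i,i+d)$ and zero elsewhere). Then $\mathcal F_a\mathcal F_b\subseteq\mathcal F_{a+b}$, conjugation by an invertible upper triangular matrix preserves each $\mathcal F_d$, and the elementary fact I record first is: if $X\in\mathcal F_d$ and $P$ is invertible upper triangular with diagonal part $D$, then $PXP^{-1}\equiv DXD^{-1}\pmod{\mathcal F_{d+1}}$, so that $[PXP^{-1}]_d=D\,[X]_d\,D^{-1}$. (Only the diagonal of $P$ is felt on the leading band of a conjugate; write $P=D+P_+$ and $P^{-1}=D^{-1}+Q_+$ with $P_+,Q_+\in\mathcal F_1$ and expand.)

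Now assume for contradiction that $E\ne 0$, and let $d\ge 2$ be minimal with $E_d:=[E]_d\ne 0$, so $E\in\mathcal F_d$. The crux is to expand the hypothesis
\[ M^kJM^{-k}=J^{\ell_k}MJ^{-\ell_k},\qquad \ell_k:=\log_\alpha(1-\alpha^k),\quad 1\le k\le q-2,\]
modulo $\mathcal F_{d+1}$. Since $d\ge 1$ we have $\mathcal F_{2d}\subseteq\mathcal F_{d+1}$, so all terms quadratic (or higher) in $E$ may be discarded. Writing $M=J+E$ and $S:=\sum_{i=0}^{k-1}J^iEJ^{k-1-i}\in\mathcal F_d$, one gets $M^k\equiv J^k+S$ and $M^{-k}\equiv J^{-k}-J^{-k}SJ^{-k}$ modulo $\mathcal F_{2d}$, and multiplying out,
\[ M^kJM^{-k}-J\;\equiv\;[S,J]\,J^{-k}\pmod{\mathcal F_{d+1}}.\]
The commutator telescopes, $\left[\sum_{i=0}^{k-1}J^iEJ^{k-1-i},\,J\right]=[E,J^k]$, so the right-hand side collapses to $E-J^kEJ^{-k}$; meanwhile trivially $J^{\ell_k}MJ^{-\ell_k}-J=J^{\ell_k}EJ^{-\ell_k}$. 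Hence the hypothesis gives
\[ E-J^kEJ^{-k}\;\equiv\;J^{\ell_k}EJ^{-\ell_k}\pmod{\mathcal F_{d+1}},\qquad 1\le k\le q-2.\]

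Taking $d$-th band parts and applying the elementary fact with $P=J^k$ (diagonal $D^k$) and $P=J^{\ell_k}$ (diagonal $D^{\ell_k}$), where $D$ is the diagonal of $J$, turns this into the exact identity $E_d-D^kE_dD^{-k}=D^{\ell_k}E_dD^{-\ell_k}$ for all $1\le k\le q-2$. Reading entry by entry: for a pair $(i,j)$ with $j=i+d$, conjugation by a diagonal matrix multiplies the $(i,j)$ entry by the quotient $r$ of the diagonal entries of $J$ at positions $i$ and $j$ (a nonzero complex number, since $J$ is invertible), so we obtain $(E_d)_{ij}(1-r^k)=(E_d)_{ij}\,r^{\ell_k}$ for every $1\le k\le q-2$. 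If some $(E_d)_{ij}\ne 0$, then $r$ satisfies $1-r^k=r^{\log_\alpha(1-\alpha^k)}$ for every $1\le k\le q-2$, which is precisely the system shown to be unsolvable in Lemma~\ref{nonlinearsystem}. Therefore $E_d=0$, contradicting the choice of $d$; hence $E=0$ and $M=J$.

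The main obstacle is the bookkeeping in the second paragraph: making the ``modulo $\mathcal F_{d+1}$'' expansions of $M^{k}$, $M^{-k}$ and $M^kJM^{-k}$ precise, and verifying that the part linear in $E$ reduces exactly to $E-J^kEJ^{-k}$ on the leading band via the telescoping commutator identity. This is purely formal manipulation of the filtration, but it is the one place where a slip would break the argument; the band-part/diagonal-conjugation fact and the appeal to Lemma~\ref{nonlinearsystem} are then routine.
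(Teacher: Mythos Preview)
Your proof is correct and follows essentially the same strategy as the paper's: peel off the bands of $M-J$ one at a time, and on the leading nonzero band reduce the hypothesis to the entrywise identity $(1-r^k)=r^{\log_\alpha(1-\alpha^k)}$, which Lemma~\ref{nonlinearsystem} forbids. Your filtration language and the telescoping identity $[\,\sum J^iEJ^{k-1-i},\,J\,]=[E,J^k]$ package the computation more cleanly than the paper's explicit induction-on-$k$ formulas for $(M^kJM^{-k})_{i,i+d}$ and $(J^kMJ^{-k})_{i,i+d}$, but the substance is identical.
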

    \begin{proof}
        We begin with the assertion that all elements along the 2nd superdiagonal $M_{i,i+2}$, $1 \leq i \leq n-2$, must be 0.  To show this, we first verify by induction on $k$ the equality 
        \[(M^k J M^{-k})_{i, i+2}  = M_{i, i+2}\left( 1 - M_{i,i}^k / M_{i+2, i+2}^k \right), \; \; 1 \leq i \leq n-2. \]
        Another inductive argument on $k$ shows that 
         \[(J^k M  J^{-k})_{i, i+2}  = M_{i, i+2}\cdot M_{i,i}^k / M_{i+2, i+2}^k, \; \; 1 \leq i \leq n-2.\]
         The assumption  $M^k J M^{-k} = J^{\log_{\alpha}(1 - \alpha^k)} M J^{-\log_{\alpha}(1 - \alpha^k)}$ for all $ 1 \leq k \leq q-2$ then gives us 
         \[  M_{i, i+2}\left( 1 - \left( M_{i,i} / M_{i+2, i+2} \right)^k \right) = M_{i, i+2}\cdot\left(  M_{i,i} / M_{i+2, i+2} \right)^{\log_{\alpha}(1 - \alpha^k)} , 1 \leq k \leq q-2.\]
         Let $r = M_{i,i} / M_{i+2, i+2}$.  If $M_{i, i+2} \neq 0$, this equation requires a nontrivial solution (in $r$) to the system 
         \[ 1 -  r^k = r^{\log_{\alpha}(1 - \alpha^k)}, \; \; 1 \leq k \leq q-2,\]
         which is prohibited by Lemma \ref{nonlinearsystem}.  Therefore we must have $M_{i, i+2} = 0.$

         An induction argument will now show that the entries of $M$ are 0 everywhere above the first superdiagonal.  For some $2 < r$, we assume that the $j^{th}$ superdiagonal entries $M_{i, i+j}$, $1 \leq i \leq n-j$, are all 0, for $1 \leq j \leq r$.  Another induction on $k$ shows that 
          \[  M_{i, i+r+1}\left( 1 - \left( M_{i,i} / M_{i+r+1, i+r+1} \right)^k \right) \]
          \[ \hspace{2 in} = M_{i, i+r+1}\cdot\left(  M_{i,i} / M_{i+r+1, i+r+1} \right)^{\log_{\alpha}(1 - \alpha^k)} , 1 \leq k \leq q-2,\]
          and Lemma \ref{nonlinearsystem} guarantees again that we must have $M_{i, i+r+1} = 0$ for all $1 \leq i \leq n-r$. Therefore the $(r+1)^{th}$ superdiagonal entries are all 0.  Since $M$ and $J$ are equal on their diagonals and superdiagonals, we have $M = J$.
    \end{proof}

\begin{pro}\label{cylic_corollary} Let $q$ be a prime power and $\alpha$ a primitive root in $\mathbb{F}_q$.  
    Let $J$ be an invertible $n \times n$ matrix, and suppose $J$ is $(q-1)^{th}$--power maximal.  If $M$ is an invertible $n \times n$ matrix satisfying 
    \[ \left[ J, M^{q-1} \right] = \left[ M, J^{q-1}\right] = 0,\]
    \[ M^k J M^{-k} = J^{\log_{\alpha}(1 - \alpha^k)} M J^{-\log_{\alpha}(1 - \alpha^k)}, \; \; \; \;  1 \leq k \leq q-2,\]
    then $J=M$.
\end{pro}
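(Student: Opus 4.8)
The plan is to exploit the hypothesis that $J$ is $(q-1)^{\mathrm{th}}$--power maximal to force $M$ to be a \emph{polynomial} in $J$; once that is known, the conjugation relations degenerate completely and $M = J$ follows with no computation. In particular, for this statement none of the more computational Appendix lemmas (Lemmas~\ref{nonlinearsystem}, \ref{upper_triangular}, \ref{MJlemma}) are actually needed.

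First I would conjugate so that $J$ is in Jordan canonical form $J = J(\lambda_1,s_1)\oplus\cdots\oplus J(\lambda_r,s_r)$; this is harmless, since simultaneous conjugation of $J$ and $M$ changes neither the hypotheses nor the conclusion. By the definition of $(q-1)^{\mathrm{th}}$--power maximality, the minimal polynomial of $J^{q-1}$ has degree $n$; equivalently $J^{q-1}$ is nonderogatory (cyclic). The one input I need is the standard fact that the centralizer in $M_n(\mathbb{C})$ of a nonderogatory matrix $N$ is exactly the algebra $\mathbb{C}[N]$ of polynomials in $N$, together with the trivial inclusion $\mathbb{C}[J^{q-1}] \subseteq \mathbb{C}[J]$.

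Next, the relation $[M, J^{q-1}] = 0$ says precisely that $M$ centralizes $J^{q-1}$, so $M = p(J^{q-1})$ for some $p \in \mathbb{C}[t]$, and hence $M \in \mathbb{C}[J]$; in particular $M$ commutes with $J$ and with every power $J^{c}$. (Only one of the two commutation hypotheses is used.) Finally I would substitute $k = 1$ into the conjugation relations, obtaining $M J M^{-1} = J^{c} M J^{-c}$ with $c = \log_{\alpha}(1-\alpha)$ --- an admissible index whenever $q \geq 3$, which holds for every cyclic quandle. Since $M$ commutes with $J$, the left--hand side is $J$ and the right--hand side is $M$, so $J = M$.

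I do not expect a genuine obstacle here: the only step with content is the centralizer identification, which is standard, and the real observation is simply that $(q-1)^{\mathrm{th}}$--power maximality is exactly what collapses the centralizer of $J^{q-1}$ down to polynomials. If one wished instead to remain within the framework of the Appendix lemmas, the alternative plan would be to note that $M^{q-1} \in C(J) = \mathbb{C}[J]$ is upper triangular, that $M$ is conjugate to $J$ (the pair $J, M$ defines a quandle homomorphism out of the connected quandle $F_{q,\alpha}$, by its presentation), and hence is itself $(q-1)^{\mathrm{th}}$--power maximal, then apply Lemma~\ref{upper_triangular} to get that $M$ is upper triangular sharing the diagonal and superdiagonal of $J$, and finally invoke Lemma~\ref{MJlemma}; but I would present the short argument above as the proof and leave this longer route as a remark.
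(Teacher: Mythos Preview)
Your argument is correct and substantially cleaner than the paper's. The key observation you make---that $(q-1)^{\mathrm{th}}$--power maximality of $J$ is exactly the statement that $J^{q-1}$ is nonderogatory, so its centralizer is $\mathbb{C}[J^{q-1}]\subseteq\mathbb{C}[J]$---immediately forces $M$ to commute with $J$, after which a single instance ($k=1$) of the conjugation relation collapses to $J=M$. Every step is sound: the nonderogatory/centralizer fact is standard, the inclusion $\mathbb{C}[J^{q-1}]\subseteq\mathbb{C}[J]$ is trivial, and $k=1$ is admissible since $q\geq 3$ for any cyclic quandle.

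The paper instead proceeds computationally: it first derives $J^{q-1}=M^{q-1}$ from the relations, uses similarity of $M$ to $J$ (implicit in any one conjugation relation) to conclude $M$ is also $(q-1)^{\mathrm{th}}$--power maximal, applies Lemma~\ref{upper_triangular} to get $M$ upper triangular, and then invokes Lemma~\ref{MJlemma} (with Lemma~\ref{nonlinearsystem} in the background) to pin down every entry of $M$. That route uses all of the conjugation relations and all three Appendix lemmas; yours uses a single commutator hypothesis, a single conjugation relation, and one textbook fact. What the paper's approach buys is self-containment within elementary matrix manipulations; what yours buys is a one-line conceptual proof that isolates exactly where the maximality hypothesis enters. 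Your closing remark already identifies the paper's route as the ``alternative plan,'' so you have the comparison right.
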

\begin{proof}
Without loss of generality we assume $J$ is in Jordan Normal form, hence upper triangular.  Then $J^{q-1}$ is also upper triangular.  Using the given defining relations, we then have  
\[ J^{q-1} = \left( M^k J M^{-k} \right)^{q-1} = \left( J^{\log_{\alpha}(1 - \alpha^k)} M J^{-\log_{\alpha}(1 - \alpha^k)} \right)^{q-1} = M^{q-1}.\]
Therefore $M^{q-1}$ is upper--triangular.  Since $M$ is similar to $J$, it also satisfies the condition that the characteristic polynomial of $M^{q-1}$ coincides with its minimal polynomial, hence $M$ is also $(q-1)^{th}$--power maximal, and so by Lemma \ref{upper_triangular} $M$ is also upper triangular.  Now Lemma \ref{MJlemma} gives us $J=M$. 

\end{proof}

        \subsection{Constant 2-dimensional representations of a cyclic quandle} Let $X = (\mathbb{Z}_q, \alpha)$ (so $q$ is a prime power and $\alpha$ is a primitive root of unity). Let $V = \mathbb{C}^2$.  Fix $a, b \in \mathbb{C}^*$, and let $\phi: X \rightarrow \text{Conj}(\text{Aut} (V))$ defined by the constant map 
        \[ \phi: x \mapsto A =  \begin{bmatrix} a & 0 \\ 0 & b \end{bmatrix}, \; \; \; x \in X. \]
        Then $\phi$ is a quandle morphism, because $\phi(0)$ and $\phi(1)$ both trivially satisfy the defining relations of $X$ (since $\left[ \phi(0), \phi(1) \right] = 0$).  Clearly $\mathbb{C} \begin{bmatrix} 1 \\ 0 \end{bmatrix}$ and $\mathbb{C} \begin{bmatrix} 0 \\ 1 \end{bmatrix}$ are 1-dimensional subrepresentations of $V$.  So this is a completely reducible 2-dimensional representation of $X$.

        Next, let $\psi: X \rightarrow \text{Conj}(\text{Aut} \mathbb{C}^2)$ defined by the constant map 
        \[ \psi: x \mapsto A =  \begin{bmatrix} \omega & 1 \\ 0 & \omega \end{bmatrix}, \; \; \; x \in X. \]
        Then $\phi$ is a quandle morphism, because $\psi(0)$ and $\psi(1)$ both trivially satisfy the defining relations of $X$ (since $\left[ \psi(0), \psi(1) \right] = 0$).  Then we can verify that $\mathbb{C} \begin{bmatrix} 1 \\ 0 \end{bmatrix}$ is a 1-dimensional subrepresentation of $\mathbb{C}^2$, while its vector space complement $\mathbb{C} \begin{bmatrix} 0 \\  1  \end{bmatrix}$ is \textit{not} a subrepresentation.  So this is a reducible - but not completely reducible - representation of $X$.
        \subsection{An explicit construction}
        Let $\pi$ denote the regular representation of the dihedral quandle $\Z_6$, $\pi: \Z_6 \rightarrow \Aut( \C\Z_6)$.  We use the quandle structure of $\Z_6$ to find the matrix representations of all elements of $\Z_6$.  We'll do this with two bases: first with the standard basis $\left\{ e_i \right\}_{i=0}^5$, and next with the basis taken from the decomposition of $\C\Z_6$ into irreducible representations.  
        
        With respect to the standard basis, we have 
        \[ \pi(0) = \pi(3)  =  \begin{bmatrix}  1&0&0&0&0&0 \\ 
        										0&0&0&0&0&1 \\
        										0&0&0&0&1&0	\\
        										0&0&0&1&0&0	\\
        										0&0&1&0&0&0 \\
        										0&1&0&0&0&0
        										\end{bmatrix}
         \] 
        \[ \pi(1) = \pi(4)  =  \begin{bmatrix}  0&0&1&0&0&0 \\ 
        										0&1&0&0&0&0 \\
        										1&0&0&0&0&0	\\
        										0&0&0&0&0&1	\\
        										0&0&0&0&1&0 \\
        										0&0&0&1&0&0
        										\end{bmatrix}
         \] 
        \[ \pi(2) = \pi(5)  =  \begin{bmatrix}  0&0&0&1&0&0 \\ 
        										0&0&1&0&0&0 \\
        										0&1&0&0&0&0	\\
        										1&0&0&0&0&1	\\
        										0&0&0&0&0&1 \\
        										0&0&0&0&1&0
        										\end{bmatrix}
         \] 
         
         Next we list the matrix representation with respect to the basis 
         \begin{align*} 
         v_1 & = (1,1,1,1,1,1), \\
         v_2 & = (1,-1,1,-1,1,-1), \\
         v_3 & = e_0 - e_2, \\
         v_4 & = e_2 - e_4,  \\
         v_5 & = e_1 - e_3, \\
         v_6 & = e_3 - e_5.
         \end{align*}
         Then, as described above, $\C \Z_6$ decomposes into four irreducible representations.

        \end{document}